\documentclass[12pt]{amsart}

\usepackage[letterpaper, margin=1.25in]{geometry}
\usepackage[colorlinks=true,citecolor=cyan!45!blue,linkcolor=blue!60!gray]{hyperref}%
\usepackage[english]{babel}
\usepackage[utf8]{inputenc}
\usepackage{subcaption}
\usepackage{amsmath}
\usepackage{amssymb}
\usepackage{amsfonts}
\usepackage{amsthm}
\usepackage{mathtools}
\usepackage{mathrsfs}
\usepackage[all]{xy}
\usepackage{graphicx}
\usepackage{color}
\usepackage{url}
\usepackage{caption}

\usepackage[colorinlistoftodos]{todonotes}
\usepackage{tikz}
\usetikzlibrary{patterns}
\usepackage{young}

\usetikzlibrary{calc}
\usepackage{multicol}

\usepackage{textcomp}

\setcounter{tocdepth}{1}

\usepackage[
backend=biber,
style=alphabetic,
sorting=ynt,
natbib=true
]{biblatex}
\addbibresource{calabi_yau.bib}

\theoremstyle{plain}
\newtheorem{thm}{Theorem}
\newtheorem{lemma}[thm]{Lemma}
\newtheorem{cor}[thm]{Corollary}

\newtheorem{prop}[thm]{Proposition}

\theoremstyle{definition}
\newtheorem{defn}[thm]{Definition}

\theoremstyle{remark}
\newtheorem{rem}[thm]{Remark}

\numberwithin{equation}{section}
\numberwithin{thm}{section}

\newcounter{casenum}
\newenvironment{caseof}{\setcounter{casenum}{1}}{\vskip.5\baselineskip}
\newcommand{\case}[2]{\vskip.5\baselineskip\par\noindent {\bfseries Case \arabic{casenum}:} #1\\#2\addtocounter{casenum}{1}}

\newcommand{\oM}{\hspace*{0pt} \mkern 4mu\overline{\mkern-4mu M\mkern-1mu}\mkern 1mu}

\newcommand{\bbz}{\mathbb{Z}}
\newcommand{\bbr}{\mathbb{R}}
\newcommand{\bbc}{\mathbb{C}}

\newcommand{\bbp}{\mathbb{P}}

\newcommand{\bbn}{\mathbb{N}}
\newcommand{\bbt}{\mathbb{T}}

\newcommand{\calo}{\mathcal{O}}
\newcommand{\caln}{\mathcal{N}}
\newcommand{\calc}{\mathcal{C}}
\newcommand{\cald}{\mathcal{D}}

\newcommand{\calq}{\mathcal{Q}}

\newcommand{\cali}{\mathcal{I}}
\newcommand{\calg}{\mathcal{G}}

\newcommand{\eb}[2]{B^c_{#1}(0)\times {#2}/\langle \sigma\rangle}
\newcommand{\ALGb}{\mathrm{ALG}_{\beta}}
\newcommand{\ALGone}{\mathrm{ALG}_1}
\newcommand{\ALGog}{\mathrm{ALG}_{1,\gamma}}
\newcommand{\ALGot}{\mathrm{ALG}_{1,2}}

\newcommand {\metzero}[1]{||{#1}||_{g_0}}
\newcommand {\metinf}[1]{||{#1}||_{g_{\infty}}}

\newcommand {\estim}[1]{O\left(#1\right)}
\newcommand {\estp}[1]{O'\left(#1\right)}

\newcommand \alp{\alpha}
\newcommand \eps{\varepsilon}
\newcommand \vphi{\varphi}

\newcommand \om{\omega}
\newcommand \Om{\Omega}

\newcommand{\ppb}{\partial\bar\partial}

\newcommand{\db}{\bar\partial}
\newcommand{\idd}{\frac{i}{2}dwd\bar{w}}
\newcommand{\ppbJ}{\partial_J\bar\partial_J}
\newcommand{\ppbJz}{\partial_{J_0}\bar\partial_{J_0}}
\newcommand{\ppbJP}{\partial_{J_P}\bar\partial_{J_P}}
\newcommand{\sumover}[2]{\sum_{i = {#1}}^{#2}}
\newcommand{\dV}{\mathrm{dvol}}
\newcommand{\dVd}{\mathrm{dvol}_D}

\newcommand{\Image}{\mathrm{Im}}

\renewcommand{\leq}{\leqslant}
\renewcommand{\geq}{\geqslant}

\begin{document}

\title{A Holomorphic Splitting Theorem}
\author{MIAO SONG}

\begin{abstract}

A long-term project is to construct a complete Calabi-Yau metric on the complement of the anticanonical divisor in a compact Kähler manifold $\oM$. We focus on the case where this smooth divisor has multiplicity 2 and is itself a compact Calabi-Yau manifold. Firstly we solved the Monge-Ampère equation when the Ricci potiential is of $O(r^{-1})$ decay on the generalized $ALG$ manifolds. Then we used the solution to this Kähler Ricci flat metric to prove a holomorphic splitting theorem: If $K_{\oM}=\calo(-2D)$, where $D$ can be realized as a smooth Calabi-Yau manifold, and if $\calo_{3D}(D)$ is trivial, then this Kähler manifold $\oM$ is biholomorphic to $\bbp^1\times D$.
\end{abstract}

\maketitle

\tableofcontents
\section{Introduction}


In 1990 Tian and Yau proposed a long-term program to construct complete Kähler Ricci-flat metrics from pairs $(\oM, D)$, where $\oM$ is a compact Kähler manifold and $D$ is a divisor such that $K_{\oM} = \calo(-\beta D)$. 
When we consider $M:=\oM\setminus D$, it is clear that there exists a global nowhere vanishing holomorphic volumn form. One can then construct a suitable background Kähler metric $\om$ on $M$, and the existence of a Kähler Ricci-flat metric is equivalent to solving a complex Monge-Ampère equation. 

In \cite{T-Y-1990} and \cite{T-Y-1991} they mainly dealt with the cases when 

\begin{enumerate}
    \item $\beta = 1$ and $D$ is neat and almost ample;
    \item $\beta > 1$ and $D$ admits a postive Kähler Einstein metric.
\end{enumerate}
In both settings, they established the existence of complete Ricci-flat Kähler metrics on $M$. They also addressed the case where $\oM$ is a fibration over a smooth algebraic curve $S$, with connected fibers, and the anticanonical divisor consisting of finitely many smooth reduced fibers. 

Subsequent work has explored a broader range of pairs $(\oM, D)$, as well as the asymptotic behavior of the resulting complete metrics. For instance, in \cite{C-H-2015}, Conlon and Hein constructed complete Kähler Ricci-flat metrics when $\beta > 1$ and $D$ admits a positive Kähler-Einstein metric or the $\mathbb{S}^1$-bundle of the normal bundle of $D$ admits an irregular Sasaki-Einstein structure. They also showed that the resulting metric is asymptotically conical. 

In \cite{C-L-22}, Collins and Li constructed new complete Calabi-Yau metrics on the complement of anticanonical divisors $D$, under the assumption that the normal bundle of $D$ is ample and that $D$ consists of two transversely intersecting smooth components. The asymptotic geometry in their setting is modeled on a generalization of the Calabi ansatz.

In \cite{HHN}, Haskins, Hein, and Nordström provided a new proof of the existence of asymptotically cylindrical Calabi-Yau manifolds. They considered pairs $(\oM, D)$ satisfying $K_{\oM} = \calo(-D)$, where the normal bundle of $D$ is holomorphically trivial and $D$ admits a Kähler Ricci-flat metric. Their paper also addresses the orbifold setting, where $D$ is an effective orbifold divisor and the orbifold normal bundle of $D$ is biholomorphic to $(\bbc\times D')/\langle\sigma\rangle$, with $D'$ smooth and equipped with a $\sigma$ invariant Kähler Ricci-flat metric. Here $\langle\sigma\rangle$ denotes the finite group of order $m$ acting by $\sigma(w,x)=(\mathrm{exp}(\frac{2\pi i}{m}w),\sigma(x))$ on the product. A key improvement in their work was the demonstration that the complete metric decays exponentially to a product Ricci-flat metric on the cylindrical end.

In this paper, we focus on a setting analogous to \cite{HHN}. But in stead of assuming $K_{\oM} = \calo(-D)$(i.e.,$\beta=1$), we consider the case $K_{\oM} = \calo(-2D)$(i.e.,$\beta=2$). Before stating our main result, we first clarify our definition of Calabi-Yau manifolds.

\begin{defn}
    A Kähler manifold $(M, g, J)$ is called a Calabi-Yau manifold if its canonical bundle $K_M$ is holomorphically trivial. This is equivalent to the existence of a nowhere vanishing holomorphic $(n,0)$-form $\Om$ on $M$. 
\end{defn}

This definition includes all complex tori, meaning we do not require $M$ to be simply connected or algebraic. Under this definition, we now state our main theorem:

\begin{thm}\label{splitting}
    Let $(\oM, \omega, J)$ be a compact Kähler manifold and $D$ be a smooth divisor in $\oM$ such that 
    \begin{enumerate}
        \item $K_{\oM} = \mathcal{O}(-2D)$;
        \item $D$ is a compact Calabi-Yau manifold;
        \item $\mathcal{O}_{3D}(D)$ is trivial as a holomorphic line bundle.
    \end{enumerate}
    Then $\oM$ is biholomorphic to $\mathbb{P}^1\times D$.
\end{thm}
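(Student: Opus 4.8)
The plan is to pass to the non-compact manifold $M=\oM\setminus D$, construct on it a complete Ricci-flat Kähler metric asymptotic to a product $\bbc\times D$, use Ricci-flatness to split $M$ holomorphically as $\bbc\times N$, and finally reconstruct $\oM$ as a (necessarily trivial) $D$-bundle over $\bbp^1$. The first move is a reduction to the generalized $\mathrm{ALG}$ setting of the earlier sections. By adjunction $K_D=(K_{\oM}+D)|_D=\calo(-D)|_D=N_D^{-1}$, and since $D$ is Calabi-Yau this is trivial, so $N_D=\calo_D(D)$ is holomorphically trivial and $M$ has a single end. Because $K_{\oM}=\calo(-2D)$, the manifold $M$ carries a nowhere-vanishing holomorphic $n$-form $\Om$, coming from a meromorphic volume form on $\oM$ with pole divisor exactly $2D$; in a transverse coordinate $w$ cutting out $D$ one has $\Om\sim w^{-2}\,dw\wedge\Om_D$, i.e. $\Om\sim d\zeta\wedge\Om_D$ with $\zeta=-1/w$. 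Using the triviality of $\calo_{3D}(D)$ — this is where hypothesis (3) enters, to control how fast the complex geometry near $D$ approaches the product model — one builds out of $\zeta$ and a Ricci-flat metric $\om_D$ on $D$ a background Kähler metric $\om_0$ on $M$ that near infinity is close to $\frac i2 d\zeta\,d\bar\zeta+\om_D$ on $\bbc\times D$ and whose Ricci potential is $O(r^{-1})$, $r\asymp|\zeta|$; thus $M$ is a generalized $\mathrm{ALG}$ manifold of the type treated earlier.

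Applying the Monge-Ampère solvability result proved earlier, there is $\vphi$ with $\om:=\om_0+i\ppb\vphi$ complete, Ricci-flat, and still asymptotic to $\bbc\times D$; in particular $(M,\om)$ has quadratic volume growth, hence is parabolic, so every bounded subharmonic function on it is constant. I would then produce a non-constant holomorphic function $F$ on $M$ of at most linear growth: cut off $\zeta$ away from a compact core to get $\chi\zeta$, solve $\db v=\db(\chi\zeta)$ — a compactly supported $(0,1)$-form — with $v$ of sublinear growth, which the asymptotic geometry permits, and set $F=\chi\zeta-v$. Yau's gradient estimate on manifolds with $\mathrm{Ric}\geq 0$ makes $|dF|_\om$ bounded; Ricci-flatness and the Bochner formula give $\Delta|dF|^2=|\nabla dF|^2\geq 0$, so the bounded subharmonic function $|dF|^2$ is constant and $\nabla dF=0$. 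Thus $F$ is a holomorphic submersion with parallel differential, $(M,\om)$ splits isometrically and holomorphically as the product of flat $\bbc$ with $(N,g_N)$, $F$ being the projection to $\bbc$; and since $F\to\infty$ along the single end of $M$ it is proper, so its fibres (all isometric to $(N,g_N)$) are compact, i.e. $N$ is a Calabi-Yau manifold. (Alternatively one can exhibit a line in $(M,\om)$ from the asymptotics, apply Cheeger-Gromoll, and upgrade to a holomorphic splitting using $J$-invariance of the parallel plane field.)

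For the reconstruction, properness of $F$ makes $\phi:=[1:F]$ extend to a holomorphic map $\phi:\oM\to\bbp^1$ with $\phi(D)=\{\infty\}$ and $\phi^{-1}(\bbc)=M=\bbc\times N$. Since $v$ has sublinear growth and $\zeta=-1/w$, one gets $F=-1/w+(\text{holomorphic across }D)$, so $F$ has a simple pole along $D$; hence the coordinate $1/F$ at $\infty\in\bbp^1$ pulls back to $w\cdot(\text{unit})$ and $\phi$ is a submersion along $D$, while $dF\neq 0$ makes it a submersion on $M$. Thus $\phi$ is a proper holomorphic submersion; its fibre over $\infty$ is $D$ and a general fibre is $N$, and since the family is trivial over $\bbc$ one gets $N\cong D$ and all fibres biholomorphic, so by the Fischer-Grauert theorem $\phi$ is a holomorphic $D$-bundle over $\bbp^1$. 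This bundle is trivial over $\bbc$ by construction and over a disc around $\infty$ by Grauert's Oka principle; the resulting clutching map into $\mathrm{Aut}^0(D)$ has relatively compact image, because in these trivialisations $\om$ is the product of flat $\bbc$ with $g_N$ while being asymptotic to the product with $g_D$, so the identifications of nearby fibres converge — hence the clutching map extends holomorphically across the puncture and can be absorbed into the trivialisation near $D$, making the bundle globally a product. Therefore $\oM\cong\bbp^1\times D$.

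The crux is the middle paragraph: one must know that the Ricci-flat metric $\om$ is genuinely asymptotic to $\bbc\times D$ with a good enough rate, and then obtain the linear-growth holomorphic function $F$ (equivalently, a line) by solving the $\db$-problem with growth control — here the asymptotic analysis of the Monge-Ampère solution and the parabolicity of the quadratic-volume-growth end are used essentially. The reconstruction is then comparatively formal; its only delicate point is that the product structure of $\om$ is what excludes a non-trivial $D$-bundle over $\bbp^1$ (the Kähler hypothesis on $\oM$ already rules out many such bundles, but not obviously all).
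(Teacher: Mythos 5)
Your overall architecture is the same as the paper's: use adjunction and the triviality of $\calo_{3D}(D)$ to build an $\mathrm{ALG}$ background metric with Ricci potential $O(r^{-1})$, solve the Monge--Ampère equation to get a complete Ricci-flat metric asymptotic to $\bbc\times D$, produce a linear-growth holomorphic function with parallel differential to split $M$, and then compactify. The genuine gap is exactly at the step you yourself call the crux: producing $F$ by solving $\db v=\db(\chi\zeta)$. First, $\db(\chi\zeta)$ is \emph{not} compactly supported in general: the coordinate $\zeta$ (the paper's $w$) is only approximately holomorphic, $\db\zeta=O(r^{-2})$, and it can be chosen genuinely holomorphic near $D$ only when $\calo(D)$ is trivial on a whole neighborhood (e.g.\ $b^1(D)=0$), which fails precisely in the cases the theorem must cover (torus-type $D$, as in the surface case). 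Second, and more seriously, solving $\db v=\eta$ with sublinear growth on a complete Ricci-flat Kähler manifold is not something "the asymptotic geometry permits" without proof: Hörmander's $L^2$ method needs a strictly positive curvature/weight term, which is absent here, and $0$ lies in the essential spectrum of the Laplacian on the $\bbc\times D$ end, so there is no closed-range or spectral-gap argument to fall back on. This is exactly why the paper develops the weighted linear theory: it solves the Poisson equation $\Delta w'=\Delta w$ (using $\ppb w\in\calc_2^{\infty}$ and Theorem 3.11), sets $v=w-w'$, which is harmonic of linear growth, and only then uses the Bochner--Kodaira identity together with integration by parts over exhausting domains, with boundary terms controlled by $|\db v|^2=O(r^{-2\delta})$, to conclude $\mathrm{Hess}\,v=0$ and $\db v=0$ simultaneously. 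Your subsequent use of the Cheng--Yau gradient estimate, parabolicity from quadratic volume growth, and Bochner is fine, and would run equally well on this harmonic $v$; so the repair is to replace your $\db$ step by the Poisson-equation step, not to patch the $\db$ analysis.

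On the reconstruction: your route (extend $[1:F]$, Fischer--Grauert, trivialize over $\bbc$ and over a disc at infinity, absorb the clutching map) is heavier than needed and has its own unproved point -- the clutching map is a holomorphic map from a punctured disc into $\mathrm{Aut}^0(D)$, which for Calabi--Yau $D$ is a complex torus, and such maps do \emph{not} automatically extend across the puncture; your "relatively compact image" claim is the whole content and would need the same quantitative asymptotics you are trying to avoid. The paper short-circuits this: once $dv$ is parallel and $|J-J_P|=O(r^{-1})$, parallelism forces $g=g_P$ and $J=J_P$ exactly on the end, so $M\cong\bbc\times D$ on the nose and the compactification to $\bbp^1\times D$ is immediate. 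If you keep your bundle-theoretic ending, you must make the extension of the clutching map precise; otherwise adopt the paper's direct identification.
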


Here $mD$ denotes the complex analytic space $(D, \calo/\cali^{m})$, where $\cali$ is the ideal sheaf of $D$ in $\oM$. Then $\mathcal{O}_{3D}(D)$ should be interpreted as the restriction of $\calo(D)$ to the second order infinitesimal neighborhood of $D$. Also one can view this as the quotient sheaf given by 
\begin{align*}
    0\to \cali^{m}\to \calo(D)\to \calo_{(m+1)D}(D) \to 0.
\end{align*}
We will mainly follow the discussion in \cite{HHN} to give more precise analytic interpretations of the third condition in Section 2. In the case when $h^1(\oM)=0$, the third condition is automatically satisfied.

As in prior work, our first step is to solve the complex Monge–Ampère equation on $\oM\setminus D$. The third condition plays a critical role in constructing suitable background metrics. A natural question is whether there exist concrete examples satisfying the hypotheses of theorem \ref{splitting}. Surprisingly, using the existence of the Ricci-flat Kähler metric, we prove a negative result: no nontrivial such examples exist.

We give a quick proof of this theorem in complex dimension two, where Enriques–Kodaira classification of compact Kähler surfaces applies.

\begin{proof}[proof when $\mathrm{dim}_{\bbc}\oM = 2$:]
    Since $K_{\oM} = \calo(-2D)$, the canonical divisor is divisible by 2. This implies that $\oM$ should be minimal and $k(\oM) = -\infty$. Therefore, $\oM$ must be either $\bbp^2$ or $\bbp(\mathcal{E})$, where $\mathcal{E}$ is some rank 2 holomorphic vector bundle over a Riemann surface $\Sigma_g$ (when $g=1$ they are Hirzebruch surfaces). It is clear that $\bbp^2$ is not possible. In the remaining cases, we have a holomorphic projection $\pi:\bbp(\mathcal{E})\to\Sigma_g$. The canonical bundle is given by
    \begin{align*}
        K_{\oM} = -2C + (k + e)f,
    \end{align*}
    where $kf$ is the pull-back of the canonical divisor on $\Sigma_g$ and $ef$ is the pull-back of the corresponding divisor of $\Lambda^2\mathcal{E}$ on $\Sigma_g$. $C$ can be viewed as the divisor added at infinity. 

    Since the only compact Calabi-Yau manifold of complex dimension 1 is complex torus, it follows that $\Lambda^2\mathcal{E}$ is trivial and $\Sigma_g = \bbc / \Lambda$. There are only two such types of $\mathcal{E}$ on complex torus, one is the direct sum of two trivial line bundles, whose projective bundle is biholomorphic to $\bbp^1\times \bbc/\Lambda$. The other is the non-trivial extension given by
    \begin{align*}
        0\to \calo\to\mathcal{E}\to\calo\to 0
    \end{align*}
    One can check that in this example $\calo_{2D}(D)$ is not trivial. More precisely, $\calo_{2D}(D)$ corresponds to the nontrivial extension element in $\mathrm{Ext}^1(\calo, \calo)$.
\end{proof}

From the proof, we see that there are examples where $\calo_{2D}(D)$ is nontrivial, and $\oM$ is not a product holomorphically. It remains unclear to the author whether condition (3) in theorem \ref{splitting} is indeed necessary.

The proof of theorem \ref{splitting} proceeds in the following steps. First, we construct a complete background Kähler metric on $M=\oM\setminus D$ that asymptotically approaches the product metric on $\bbc\times D$ at infinity, with decay rate $O(r^{-1})$. Next, we refine this metric so that the Ricci potential $f=O(r^{-\mu})$ for some $\mu > 2$, and $\int (e^{f}-1)\om^n=0$. This step involvs solving Poisson equations the complete Kähler manifolds. Finally, we establish the existence of a complete Calabi–Yau metric on $M$, together with a holomorphic function $w$ satisfying $\text{Hess}w=0$. This implies that $M$ is biholomorphic to $\bbc\times D$, with the biholomorphism given by $w$.

\textbf{Acknowledgements} The author would like to thank his advisor, Professor Xiuxiong Chen. The author was partially supported by Simons Foundation International, LTD.

\section{Weighted Spaces}
In order to finally solve the complex Monge-Ampère equation on the noncompact manifold $M=\oM\setminus D$, we first establish the weighted spaces and a weighted Sobolev inequality. However, our definitions of these weighted spaces are slightly different from those in \cite{CC-21}, which makes it harder to apply some known methods to solve the Poisson equation and the complex Monge-Ampère equation. This main difference comes from the analysis on the local complex structures of $D$ in a tubular neighborhood $U$.

\subsection{Motivation of new weighted spaces}
Let $\oM$ be a complex K\"ahler manifold with complex dimension $n\geqslant 2$. Suppose $D$ is a smooth divisor in $\oM$ such that it is also a smooth Calabi-Yau manifold and $2D\in|-K_{\oM}|$. From the adjunction formula we know that the normal bundle of $D$ in $\oM$ is holomorphically trivial. Our goal is to describe the local structure of the holomorphic volume form $\Om$ and the complex structure near the divisor $D$.

If the normal bundle $\caln$ of $D$ is just complex trivial, then we can find a tubular neighborhood $U$ of $D$ and a differeomorphism $\Phi : \Delta \times D \rightarrow U$ such that $\Phi (0,q)=q$ for $q\in D$ and $\Phi^* J-J_0=0$ along $\{0\}\times D$. Here $\Delta$ is the unit disk in $\bbc$ and $J_0$ is the product complex structure. However, even if $N$ is holomorphically trivial, the diffeomorphism $\Phi$ need not to be biholomorphic. In particular, the defining function $w$ for $D$ in $U$, induced by this $C^{\infty}$ trivialization, may not be holomorphic. 

In \cite{HHN} they used the implicit function theorem to modify $\Phi$ (via composition with a suitable diffeomorphism $F\in \mathrm{Diff(U)}$) so that the $J_0$-holomorphic disks $\Delta\times \{q\}$ becomes $J$-holomorphic with respect to $(F\circ\Phi)^{-1}$. Under this new product structure $\Delta\times D$, we can decompose the tensor $\Phi^*J-J_0$ into four components. This gauge-fixing procedure ensures that the components in $T^*\Delta\otimes(T\Delta\oplus TD)$ vanish. We denote the components of $\Phi^*J-J_0$ in $T^*D\otimes T\Delta$ and $T^*D\otimes TD$ by $K$ and $L$, respectively. Clearly, $L$ vanishes to first order along $D$. The next lemma, proved in \cite{HHN}, shows that the vanishing order of $K$ is determined by the restriction of $\calo(D)$ to an infinitesimal neighborhood of $D$ in $U$. 

\begin{lemma}\label{lemma 2.1}

Let $z$ be the holomorphic projection $\Delta\times D\to \Delta$. Then $N$ is trivial as a holomorphic line bundle if and only if the $T^*D\otimes T\Delta$ component of $\Phi_* J-J_0$ is $O_{g_0}(|z|^2)$. In particular, $\bar{\partial}z=\frac{i}{2}K\circ dz=O_{g_0}(|z|^2)$. If $\calo_{mD}(D)$ is holomorphically trivial, then we can choose the defining function $w$ such that $\bar{\partial}z=O_{g_0}(|z|^{m+1})$. 

\end{lemma}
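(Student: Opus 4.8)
The plan is to analyze the obstruction to holomorphic triviality of $N$ (and more generally of $\calo_{mD}(D)$) in terms of the Čech $1$-cocycle built from transition functions of the defining function $z$, and to match this with the Taylor expansion in $z$ of the tensor $\Phi^*J-J_0$. First I would unwind the gauge-fixing from \cite{HHN}: after the modification, the only surviving components of $\Phi^*J-J_0$ lie in $T^*D\otimes T\Delta$ and $T^*D\otimes TD$, denoted $K$ and $L$. The point is that $\bar\partial z$, computed with respect to $J$, is controlled entirely by $K$: since $z$ is the $J_0$-holomorphic projection to $\Delta$, one has $dz\circ J_0 = i\,dz$, hence $\bar\partial_J z = \tfrac12(dz + i\,dz\circ J) = \tfrac12 dz\circ(J-J_0)\cdot i = \tfrac{i}{2}K\circ dz$, because $dz$ annihilates $TD$ and kills the $T\Delta$-components, which are exactly the gauge-fixed ones. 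So $\bar\partial z = \frac{i}{2}K\circ dz$ identically, and the vanishing order of $\bar\partial z$ along $D$ equals that of $K$.

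Next I would identify the first nonvanishing Taylor coefficient of $K$ in $z$ with a Dolbeault representative of the relevant obstruction class. Writing $K = \sum_{k\geq 1} z^{k-1}\,\kappa_k + (\text{antiholomorphic terms})$ with $\kappa_k$ a $(0,1)$-form on $D$ valued in $T\Delta \cong N$, integrability of $J$ (the Nijenhuis tensor vanishes) forces each leading $\kappa_k$ to be $\bar\partial_D$-closed, so it defines a class in $H^1(D, N) = H^1(D,\calo_D)$ (using $N$ trivial). Triviality of $\calo_{mD}(D)$ as a holomorphic line bundle is equivalent, via the exact sequences
\begin{align*}
0 \to \calo_D(D)\otimes \cali^{k}/\cali^{k+1} \to \calo_{(k+1)D}(D) \to \calo_{kD}(D) \to 0,
\end{align*}
to the vanishing of the successive obstruction classes in $H^1(D,\calo_D)$ for $k=1,\dots,m-1$; and these obstruction classes are precisely (up to nonzero scalar) the classes $[\kappa_k]$. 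Hence $\calo_{mD}(D)$ trivial $\Longrightarrow$ $\kappa_1 = \dots = \kappa_m$ are all exact, and by absorbing the exact parts into a further change of the defining function $w$ (solving $\bar\partial$ on $D$ order by order, which is where the cohomological input is used), we can arrange $K = O_{g_0}(|z|^m)$, i.e. $\bar\partial z = O_{g_0}(|z|^{m+1})$. The case $m=1$ is exactly the "$N$ holomorphically trivial $\iff$ $K=O(|z|)$... " — more precisely $N$ holomorphically trivial iff the degree-zero coefficient $\kappa_1$ is exact iff (after gauge) $K = O_{g_0}(|z|^2)$, which gives the stated equivalence (the forward direction: a holomorphic trivialization provides a genuine holomorphic $w$, forcing $\bar\partial z = O(|z|^2)$ after matching $w$ to $z$ to leading order).

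The main obstacle I anticipate is bookkeeping the change-of-defining-function step cleanly: each time we replace $w$ by $w + (\text{correction})$ to kill $\kappa_k$, we must check that (a) the correction can be chosen holomorphic in the $D$-directions to the required order precisely because $[\kappa_k]=0$ in $H^1(D,\calo_D)$, (b) it does not destroy the earlier gauge conditions (the vanishing of the $T^*\Delta$-components), and (c) it does not reintroduce lower-order terms in $K$. This is a triangular/iterative argument and the care is in verifying that the perturbation at step $k$ only affects orders $\geq k$; integrability of $J$ is the structural fact that makes the leading coefficients closed and hence makes the induction go through. I would also need to be careful that "$O_{g_0}$" bounds are uniform over $D$, which follows from compactness of $D$ and smoothness of all the tensors involved. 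Since this lemma is attributed to \cite{HHN}, I would cite their argument for the detailed estimates and only reproduce the cohomological skeleton here.
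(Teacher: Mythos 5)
The paper itself offers no proof of this lemma—it is quoted from \cite{HHN}—and your outline reproduces essentially the argument of their Appendix A: the identity $\bar{\partial}_J z=\frac{i}{2}K\circ dz$ coming from the gauge-fixing, $\bar{\partial}_D$-closedness of the leading Taylor coefficient of $K$ via integrability, identification of the successive obstructions with classes in $H^1(D,\calo_D)$ through the exact sequences for $\calo_{kD}(D)$, and an order-by-order correction of the defining function. Two small points to tidy: since $\Phi^*J-J_0$ vanishes along $D$, the expansion of $K$ starts at order $|z|$ (so the first obstruction sits at the $z$-coefficient, not the constant term), and holomorphic triviality of $N$ only gives that the leading coefficient is $\bar{\partial}\log u$ for some nowhere-vanishing smooth $u$ (vanishing of the class up to the integral lattice, absorbed by changing the trivialization/defining function—exactly the freedom the last sentence of the lemma invokes), rather than literal Dolbeault exactness of the given representative.
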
 

Here $g_0 = idzd\bar{z} + \om_D$ denotes the product metric on $\Delta\times D$, with $\om_D$ any fix metric on $D$. Since $\Phi^* J-J_0$ and $\bar{\partial}z$ are tensors on $U$, the notation $O_{g_0}(|z|^k)$ indicates that their $C^0$-norms, measures with respect to $g_0$, decay like $|z|^k$. 

Let $w=\frac{1}{z}$, then we obtain a holomorphic map $\Delta^*\times D\to B^c_1(0)\times D\subset\bbc\times D$. We denote $g_{\infty} = idwd\bar{w} + \om_D$ to be the new product metric on this chart. Let $r=|w|$ be the Euclidean distance from the origin in $\bbc$. Then $r$ is comparable to the distance measured using $g_{\infty}$ when $r$ is large enough. We let $O(\cdot) = O_{g_{\infty}}(\cdot)$ be the growth/decay estimates measured using the product metric $g_{\infty}$. It then follows that: 
\begin{align*}
    O_{g_0}(|z|^k) = O(r^{-k}),\,|dz| = \frac{1}{|w|^2}|dw| =O(r^{-2}),\, |dzd\bar{z}| =O(r^{-4})
\end{align*}

We refer to $\{\Delta\times D, (z,x)\}$ as the zero-chart  and to $\{B^c_R(0)\times D, (w,x)\}$ as the infinity-chart. Typically we use $g_0$ to measure the tensors on the zero-chart, and use $g_p:=g_{\infty}$ to measure tensors on the infinity-chart. 
If $\mathcal{O}_{3D}(D)$ is trivial, we obtain the following estimates:
\begin{align*}
    \metinf{K}=\metinf{\bar{\partial}z}=O(\frac{1}{r^2}),\, \metinf{L}=O(\frac{1}{r}),\,\metinf{J-J_0}=O(\frac{1}{r})
\end{align*}

\begin{rem}\label{rm4.1.2}
    From the standard deformation theory, if $b^1(D)=0$ and the normal bundle is holomorphically trivial, then $\mathcal{O}_{U}(D)$ is trivial for some small neighborhood $U$ of $D$. In particular, $\mathcal{O}_{mD}(D)$ is holomorphically trivial for all $m\in \bbz^+$, which implies that $z$ is a locally holomorphic defining function of $D$ in $U$. Moreover, if $b^1(\oM) = 0$, then one can construct a global holomorphic submersion $\oM\to \bbp^1$ such that $D$ is a generic fiber.
\end{rem}

In the following of this paper, we always assume that $\mathcal{O}_{3D}(D)$ is holomorphically trivial. 

From the diffeomorphism $\Phi$, we derive a $J-$holomorphic local chart $(U_{\alpha}, z', x'_1,...,x'_{n-1})$ covering $D$, where in each $U_{\alpha}$, 
\begin{align}
    z'=\sum_{k=1}^{\infty}f_k(x_j)z^k,\quad x'_i=x_i+A_i(z,x_j)z+B_i(z,x_j)\bar{z}
\end{align}
 for smooth functions $f_k,A_i$ and $B_i$.
 Here $(U_{\alpha}, z, x_1,...,x_{n-1})$ is a $J_P$-holomorphic chart, and $z$ is a global $J_P$-holomorphic function on a neighborhood of $D$. Thanks to Appendix A in \cite{HHN}, the triviality of $\mathcal{O}_{3D}(D)$ ensures that $f_1$ and $f_2$ are holomorphic functions in $D$. Let
 \begin{align}
     \Tilde{z}'=\frac{z'}{f_1(x_j')}\cdot(1-\frac{f_2(x_j')}{f_1(x_j')}z'),
 \end{align}
 then $\Tilde{z}'$ is holomorphic in $U_{\alp}$ and $(U_{\alpha}, \Tilde{z}', x'_1,...,x'_{n-1})$ is still a local chart. Now there is a smooth function $f(z,x_j)$ in $U_{\alp}$ such that $\Tilde{z}' = z + z^3f(z, x_j)$. We still use $(z',x'_1,...,x'_{n-1})$ to denote these refined local coordinates.

Let $\Om$ be the meromorphic volume form on $\oM$ with a double pole along $D$. In these local coordinates, we can write:
\begin{align}
    &\Om=\frac{h(z',x_j')}{z'^2}dz'\wedge dx'_1\wedge...\wedge dx'_{n-1}  \label{e2.1}
\end{align}
for some holomorphic function $h(z',x_j')$ in $U_{\alp}$. In particular, $h(0,x_j')$ never vanishes. Now define:
\begin{align}
    &\Om_0=\frac{h(x_j)}{z^2}dz\wedge dx_1\wedge...\wedge dx_{n-1}. \label{e2.2}
\end{align}
    Comparing $\Om$ and $\Om_0$, we have: 
\begin{align}
    \Om-\Om_0=\phi_1 + \frac{dz}{z^2}\wedge \phi_2 + d\bar{z}\wedge \phi_3 + \frac{dz\wedge d\bar{z}}{z^2}\phi_4
\end{align}
where 
\begin{align}
    \phi_1\in \Gamma(\Lambda^{n}_{\bbc}(D)), \phi_2\in \Gamma(\Lambda^{n-1}_{\bbc}(D)), \phi_3\in \Gamma(\Lambda^{n-1}_{\bbc}(D)), \phi_4\in \Gamma(\Lambda^{n-2}_{\bbc}(D))
\end{align} for given $z$. They are smooth in $U_{\alp}$. When measuring these forms using metric $g_0$, we have
\begin{align}
    \metzero{\phi_1} = O_{g_0}(z), \metzero{\phi_2} = O_{g_0}(z, \bar{z}), \metzero{\phi_3} = O_{g_0}(z), \metzero{\phi_4} = O_{g_0}(1)
\end{align}
Converting these estimates using $g_{\infty}$, and recalling that $|dz|=O(r^{-2})$, we find:
\begin{align}
    &\metinf{\phi_1} = O(\frac{1}{r}), \metinf{\frac{dz}{z^2}\wedge \phi_2} = O(\frac{1}{r}), \metinf{d\bar z\wedge \phi_3} = O(\frac{1}{r^3}), \\ &\metinf{\frac{dz\wedge d\bar{z}}{z^2}\phi_4} = O(\frac{1}{r^2})
\end{align}
which implies that 
\begin{align}
    \metinf{\Om - \Om_0} = O(\frac{1}{r}).
\end{align}
Using the holomorphic transition functions of the line bundle $K_{\oM}$, one can see that $ (U_{\alp}\cap D, h(x_j)dx_1\wedge...\wedge dx_{n-1})$ defines a global nowhere vanishing section $R\in \Gamma(K_D)$. This means $\Om_0=\frac{1}{z^2}dz\wedge R$. It is now clear that the complex structure corresponds to $\Om_0$ is the product complex structure.

Then we have the following lemma:
\begin{lemma}
    There exists $f_1, f_2\in C^{\infty}(D;\bbc), f_3\in C^{\infty}(D;\bbr), f_4\in C^{\infty}(M)$ such that 
    \begin{align}
        \frac{\Om\wedge\overline{\Om}}{\Om_0\wedge\overline{\Om_0}} = (1 + 2Re(zf_1) + 2Re(z^2f_2) + |z|^2f_3 + f_4)
    \end{align}
    where $f_4=O_{g_0}(|z|^3)$
\end{lemma}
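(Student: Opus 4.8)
**The plan is to compute the ratio $\Om\wedge\overline{\Om}/\Om_0\wedge\overline{\Om_0}$ directly by expanding the difference $\Om-\Om_0$ and tracking orders of vanishing in $z$.** From the preceding discussion we already have the decomposition
\[
\Om-\Om_0=\phi_1 + \frac{dz}{z^2}\wedge \phi_2 + d\bar z\wedge \phi_3 + \frac{dz\wedge d\bar z}{z^2}\phi_4,
\]
with the forms $\phi_i$ smooth on $U_\alp$ and vanishing to the stated orders. Writing $\Om = \Om_0 + (\Om-\Om_0)$ and using that $\Om\wedge\overline\Om$ and $\Om_0\wedge\overline{\Om_0}$ are both of the form (positive function)$\,\times\,$(fixed volume form), the ratio is a globally defined real smooth function on $M$. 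The task is to identify the leading terms in its Taylor expansion in $z$ along $D$.

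First I would express everything relative to the product frame $\{dz, d\bar z, dx_1,\dots,dx_{n-1},d\bar x_1,\dots\}$ on the zero-chart. Since $\Om_0 = z^{-2}dz\wedge R$ with $R$ the global nowhere-vanishing section of $K_D$, we have $\Om_0\wedge\overline{\Om_0} = |z|^{-4}\,(dz\wedge d\bar z)\wedge (R\wedge\overline R)$ up to a universal constant. Wedging $\Om$ with $\overline\Om$ and dividing, the cross terms $\Om_0\wedge\overline{(\Om-\Om_0)}$ and its conjugate produce, after dividing by $\Om_0\wedge\overline{\Om_0}$, terms linear in the $\phi_i$; the term $(\Om-\Om_0)\wedge\overline{(\Om-\Om_0)}$ produces quadratic terms. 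The key bookkeeping point is which components survive the wedge with $dz\wedge R$ (equivalently, with $dz\wedge dx_1\wedge\cdots\wedge dx_{n-1}$) and how the powers of $z$ from $\Om_0 = z^{-2}dz\wedge R$ and from the explicit $z^{-2}$ in the $\phi_2,\phi_4$ terms combine.

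Concretely: the $\phi_1$-term (an $(n,0)$-part plus lower type, vanishing like $z$) contributes, after wedging with $\overline{\Om_0}$, a term whose leading behavior is $2\,\Real(z f_1)$ for a smooth $f_1$ on $D$ obtained by restricting the coefficient of $dz\wedge dx_1\wedge\cdots$ in $\phi_1/z$ divided by $h$; the $\frac{dz}{z^2}\wedge\phi_2$-term similarly feeds in, but one must note that $\phi_2$ vanishes like $z$ (it is $O_{g_0}(z,\bar z)$) so it actually contributes at the same linear-and-higher order — its holomorphic-in-$z$ part gives another contribution to $\Real(z f_1)$ and $\Real(z^2 f_2)$, its anti-holomorphic part gives $|z|^2 f_3$-type terms. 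The $d\bar z\wedge\phi_3$-term, when wedged with $\Om_0 = z^{-2}dz\wedge R$, gives $dz\wedge d\bar z$ times something vanishing like $z$, i.e. a $|z|$-order contribution to $f_3$ or is absorbed into $f_4$; its conjugate gives the conjugate. The purely quadratic term $(\Om-\Om_0)\wedge\overline{(\Om-\Om_0)}$ is $O_{g_0}(|z|^2)$ or better, landing in $|z|^2 f_3 + f_4$. Grouping the $z$-holomorphic linear pieces into $2\Real(zf_1)$, the $z^2$-holomorphic pieces into $2\Real(z^2 f_2)$, the genuinely $|z|^2$ real pieces into $|z|^2 f_3$, and everything that is $O_{g_0}(|z|^3)$ into $f_4$, one obtains the claimed form. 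The regularity claims ($f_1,f_2$ smooth on $D$, $f_3$ real smooth on $D$, $f_4$ smooth on $M$) follow because each $\phi_i$ is smooth on $U_\alp$ and the operations used (restriction to $D$, division by the nonvanishing $h$, taking real parts) preserve smoothness; $f_4 = O_{g_0}(|z|^3)$ is forced by collecting all terms of vanishing order $\geq 3$.

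**The main obstacle** I anticipate is not any single estimate but the combinatorial care needed to verify that \emph{all} contributions of order $|z|$ through $|z|^2$ genuinely organize into exactly the four prescribed types — in particular that there is no leftover $\Real(z\bar z^{?})$-type cross term of order $|z|^2$ that fails to be real (it must be, since the left side is manifestly real, so this is a consistency check rather than a real danger) and, more substantively, that the coefficient functions $f_1, f_2$ depend only on the $x_j$ and not on $z$. The latter requires using Lemma \ref{lemma 2.1} and the triviality of $\calo_{3D}(D)$: one needs $\bar\partial z = O_{g_0}(|z|^3)$ so that the complex structure is "holomorphic to second order," which is precisely what guarantees that the coefficient of $z$ and of $z^2$ in the expansion of the ratio can be taken to be pulled back from $D$ (any $z$-dependence would be of higher order and absorbed into $f_4$). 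I would also double-check that the choice of refined coordinate $\Tilde z'$ made earlier, under which $\Tilde z' = z + z^3 f(z,x_j)$, is what makes $f_1$ and $f_2$ well-defined as global objects on $D$ via the line-bundle transition functions, rather than merely local expressions on each $U_\alp$.
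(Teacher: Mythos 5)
The paper states this lemma without a written proof, so the only comparison available is with the argument implicit in the preceding coordinate discussion; measured against that, your route is essentially the intended one and it does work, though it can be compressed. Once you know that the ratio $\Om\wedge\overline{\Om}/\Om_0\wedge\overline{\Om_0}$ extends to a smooth real function across $D$ which equals $1$ along $D$ --- and this is the one point that deserves more care than your phrase ``the ratio is a globally defined real smooth function'': both numerator and denominator blow up like $|z|^{-4}$, and the cancellation is seen by writing $\Om$ in the refined holomorphic coordinates, where $z'=z+z^3f(z,x_j)$ and $h(z',x_j')$ is holomorphic, so that $|z/z'|^4$ is smooth and nonvanishing --- the displayed identity is nothing but the second-order Taylor expansion of that function in $z,\bar z$ along $D$: reality forces the $z$ and $\bar z$ (resp.\ $z^2$ and $\bar z^2$) coefficients to be conjugate and the $z\bar z$ coefficient to be real, and Taylor's theorem gives $f_4=O_{g_0}(|z|^3)$. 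In particular the ``main obstacle'' you single out is vacuous: Taylor coefficients along $D$ are by construction functions of the $x_j$ alone, so no separate argument (and no appeal to Lemma \ref{lemma 2.1}) is needed to rule out $z$-dependence of $f_1,f_2$; the triviality of $\calo_{3D}(D)$ enters only through the setup already quoted.

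Two bookkeeping slips in your term-by-term analysis should be corrected, although they do not affect the conclusion because your final grouping argument never uses them. Since $\phi_1\in\Gamma(\Lambda^n_{\bbc}(D))$ contains no $dz$ or $d\bar z$, the cross term $\phi_1\wedge\overline{\Om_0}$ vanishes identically, so your recipe for $f_1$ via ``the coefficient of $dz\wedge dx_1\wedge\cdots$ in $\phi_1/z$'' produces nothing; likewise the $d\bar z\wedge\phi_3$ and $\frac{dz\wedge d\bar z}{z^2}\phi_4$ pieces kill both cross terms with $\Om_0$ and $\overline{\Om_0}$ because $dz$ or $d\bar z$ would repeat. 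The only linear-order contribution therefore comes from the $dx_1\wedge\cdots\wedge dx_{n-1}$-component of $\phi_2$ (whose $z$-part and $\bar z$-part both fold, via $\mathrm{Re}(\bar z c)=\mathrm{Re}(z\bar c)$, into $2\mathrm{Re}(zf_1)$), while $\phi_1,\phi_3,\phi_4$ enter only through the quadratic term $(\Om-\Om_0)\wedge\overline{(\Om-\Om_0)}$, at order $|z|^2$ or better.
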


Using these local coordinates, since $\bar{\partial}z'=0$ and $\db x_i'=0$, we can derive 
\begin{align}
    \metinf{\nabla^k_{\bbr^2}\nabla^l_D\db z}\leq C_kr^{-k-2},\;\metinf{\nabla^k_{\bbr^2}\nabla^l_D\db x_i}\leq C_kr^{-k-1}.
\end{align}
Using the fact that $\db z = \frac{i}{2}K\circ dz$ and $\db x_i=\frac{i}{2}L\circ dx_i$, we can also get 
\begin{align}
    |\nabla^k_{\bbr^2}\nabla^l_D(J-J_0)|_{g_{\infty}} \leq C_k r^{-1-k},\;{|\nabla^k_{\bbr^2}\nabla^l_D(\Om - \Om_0)|_{g_{\infty}} \leq C_k r^{-1-k}}.
\end{align}

\begin{rem}
    We can construct a Kähler structure on $\Delta\times \bbt^2$ as follows. Let $dx$ be the holomorphic volumn form on $\bbt^2$ and $z$ be the holomorphic coordinate function on $\Delta$. Let $\Om = \frac{1}{z^2}dz\wedge(dx + zd\bar{x})$ be the meromorphic volumn form. It is easy to see that $d\Om = 0$. Then this defines a complex structure on $\Delta_{|z|\ll 1}\times \bbt^2$, which is the same as the complex structure defined by $dz\wedge(dx + zd\bar{x})$. Since $dz\wedge\Om =0$, $z$ is then a global holomorphic function, which means that $z:\Delta\times D\to \Delta$ is a holomorphic submersion. Let $D_z = \{z\}\times \bbt^2$, then $\Om$ has a double pole along $D_0$. The complex structure on $D_z$ is defined by $dx+zd\bar{x}$, which are different from each other. Let $\om = idz\wedge d\bar{z}+idx\wedge d\bar{x}$. Then $\om\wedge \Om = 0$, which means that $(M = \Delta\times\bbt^2, \om, \Om)$ is a Kähler structure. According to remark \ref{rm4.1.2}, we have $\calo_{mD_0}(D_0)$ that are trivial for all $m\in \bbz_{\geq 1}$. However, the complex structure is not a product complex structure. 
\end{rem}
\subsection{Generalized ALG manifolds}
We begin by introducing a notion of generalized ALG manifolds in higher dimensions. These should be regarded as higher-dimensional analogues of four-dimensional ALG Kähler manifolds.

\begin{defn}
    A Kähler manifold $(M, g, J)$ is called weakly $\text{ALG}_{\beta}(\frac{2\pi}{n})$ if there is a compact subset $K\subset M$, a compact Calabi-Yau manifold $(D, g_D, J_D)$, a ball $B_R(0)\subset \bbc$ with radius $R$, and a cyclic group $\langle \sigma\rangle$ with order $n$ satisfying:
    \begin{enumerate}
        \item $\sigma$ acts on $\bbr^2=\bbc$ by multiplying $e^{i\frac{2\pi}{n}}$. And the action of $\sigma$ on $(D, g_D, J_D)$ is nontrivial and preserve the Kähler structure,
        \item there is a diffeomorphism $\Psi: M\setminus K\to (B_R^c(0)\times D)/\langle \sigma\rangle$, where $B^c_R(0)\subset \bbc$,
        \item assuming $g_p=g_{\bbc} + g_D$ and $J_P = J_{\bbc} + J_D$, then 
        \begin{align*}
            |\Psi_*g - g_P|_{g_P} = O(r^{-\beta}), \quad |\Psi_*J - J_P|_{g_P} = O(r^{-\beta})
        \end{align*}
        on $\Psi(M\setminus K)$
    \end{enumerate}
\end{defn}

\begin{defn}
    for any $\delta>0$, a tensor $T$ on the end $E=\eb{R}{D}$ is in the class $\calc^{k,\alpha}_{\delta}$ if:
    \begin{enumerate}
        \item $T\in C^{k,\alpha}(E)$;
        \item for each $l,m\in\bbn_{\geq 0}$, there is a constant $C_{l,m}$ such that $|\nabla_{\bbr^2}^l\nabla_D^m|\leq C_{l,m}\frac{1}{r^{\delta+l}}$.

    \end{enumerate}
    Here, the derivative is taken using the product metric $g_P$, so does the $C^{k,\alp}$ norms. $r$ is the distant function on $\bbr^2$, and when $R\gg 1$, it is comparable to the distant function $d(p, p_0)$ on $M$ using $g_P$, where $p_0$ is some point in $K$.
\end{defn} 

We denote the space $\calc^{\infty}_{\delta}(E) = \bigcap_{k\leq 0}\calc^{k,\alp}_{\delta}(E)$. We use $\calc^{k,\alp}_{\delta}(\calc^{\infty}_{\delta})$ to denote $C^{k,\alp}(C^{\infty})$ functions on $M$ such that they are in $C^{k,\alp}_{\text{loc}}(M)(C^{\infty}(M))$and when restricted on $E$, they are in the classes $\calc^{k,\alp}_{\delta}(\calc^{\infty}_{\delta})$. For convenience, we use $\estp{r^{-\delta}}$ to denote a function in $\calc^{\infty}_{\delta}$. 
An example of a function $f\in \calc_{\delta}^{\infty}$ can be constructed as follows: Let $(M, g) = (\bbr^2\times D, g_{\bbr^2}+g_D)$, $\chi(r)$ be a smooth function on $\bbr^2$ such that $\chi(r)=0$ in $B_1(0)$ and $\chi(r)=1$ in $B_2^c(0)$. Let $v$ be a smooth function on $D$, then $f=\chi(r)\cdot\frac{1}{r^\delta}\cdot v$ is in the class $\calc_{\delta}^{\infty}$ for $\delta>0$.

\begin{defn}
    A weakly $\ALGb(\frac{2\pi}{n})$ Kähler manifold $(M, g, J)$ is said to be an $\ALGb(\frac{2\pi}{n})$ manifold if, at the end, we have: 
    \begin{align}
        \Psi_*g-g_P\in \calc^{\infty}_{\beta},\quad \Psi_*J-J_P\in \calc^{\infty}_{\beta}.
    \end{align}
    If $n=1$, we simply call it an $\ALGb$ manifold. 
\end{defn}

Now introduce polar coordinate $w=e^{t+i\theta}$, and define a new product metric $g'_{\infty} = dt^2+d\theta^2+g_D$. Then for any $\ALGb(\frac{2\pi}{n})$ manifold $(M, J)$ we have $|\nabla_{g'_{\infty}}^k(\Psi_*J-J_P)| = O(e^{-\beta t})$. When $n=1$, using the gauge-fixing method in \cite{HHN} we can derive the following lemma:

\begin{lemma}
    One can choose the diffeomorphism $\Psi$ such that $\Psi_*J - J_P = K+L$, where $K\in\Gamma(T(\Delta)\otimes T^*(D))$ and $L\in\Gamma(T(D)\otimes T^*(D))$. Meanwhile, $K,L \in \calc_{\beta}^{\infty}$. 
\end{lemma}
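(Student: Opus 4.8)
The plan is to transplant the gauge-fixing argument of \cite{HHN} to the $\ALGb$ end $E\cong B^c_R(0)\times D$ (for $n=1$ there is no quotient), with the planar exterior domain $B^c_R(0)\subset\bbc$, coordinate $w$, playing the rôle of the polydisk slice there. Decompose $A:=\Psi_*J-J_P$ into its four blocks relative to $TE=T(\Delta)\oplus TD$, where $\Delta$ denotes the $\bbc$-factor of $E$. The conclusion $\Psi'_*J-J_P=K+L$ with $K\in\Gamma(T(\Delta)\otimes T^*(D))$ and $L\in\Gamma(T(D)\otimes T^*(D))$ is precisely the vanishing of the two $T^*(\Delta)$-blocks of the transported tensor, i.e.\ that the new almost complex structure agrees with $J_P$ on $T(\Delta)$. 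Geometrically this is two requirements: that the fibers $B^c_R(0)\times\{q\}$ of $\mathrm{pr}_D$ be $J$-holomorphic curves (the $TD$-block of $J|_{T(\Delta)}$ vanishes), and that the complex structure they inherit from $J$ be the standard one on $B^c_R(0)$ (the $T(\Delta)$-block is standard). So it suffices to produce a diffeomorphism $G$ of a neighborhood of infinity such that, after replacing $\Psi$ by $\Psi'=G\circ\Psi$, these two conditions hold; one then checks $G-\mathrm{id}\in\calc^\infty_\beta$, so that the weighted classes are preserved and the surviving blocks $K,L$ lie in $\calc^\infty_\beta$.

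For the first requirement I would find, for each $q\in D$, a $J$-holomorphic curve $\Sigma_q$ near infinity realized as a graph $\{(w,\psi(w,q))\}$ over $B^c_R(0)$, with $\psi$ close to $\mathrm{pr}_D$ (so that $\psi-\mathrm{pr}_D$, via $\exp^{g_D}$, is a small section of $\mathrm{pr}_D^*TD$), and then take $G$ to be the diffeomorphism that straightens the $\Sigma_q$ to the fibers $B^c_R(0)\times\{q\}$. The condition that $w\mapsto(w,\psi(w,q))$ be holomorphic for the structure $\Psi_*J=J_P+A$ is a nonlinear Cauchy--Riemann system $\bar\partial_{J_P}\psi=Q(\psi,\nabla\psi;A)$ with parameter $q$, whose right-hand side is quadratically small in $(\psi,\nabla\psi,A)$ and whose value at $\psi=0$ is the $TD$-block of $A$, of size $O(r^{-\beta})$; its linearization at $\psi=0$ is the $\bar\partial$-operator on $B^c_R(0)$ valued in $(T^{1,0}D,J_D)$ (in local holomorphic charts on $D$, just $\bar\partial$ on $\bbc^{n-1}$-valued functions plus a zeroth-order connection term). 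I would solve it by a contraction-mapping / implicit-function argument in the weighted Hölder spaces $\calc^{k,\alpha}_\beta$ over $B^c_R(0)$, using a right inverse for $\bar\partial$ on the exterior domain with the correct weighted bounds (modulo the freedom discussed below), uniformity of these bounds in $q$, smooth dependence of $\psi$ on $q$ with the same weights for every $q$-derivative (differentiate the equation and reapply the estimates), and compactness of $D$ to pass to $\psi\in\calc^\infty_\beta(E)$. For $R\gg 1$ the straightening map is then a diffeomorphism of a neighborhood of infinity with $G-\mathrm{id}\in\calc^\infty_\beta$, and in the new coordinates the $\mathrm{pr}_D$-fibers are $J$-holomorphic.

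The second requirement is one-dimensional: on each now $J$-holomorphic fiber the induced complex structure differs from the standard one on $B^c_R(0)$ by $O(r^{-\beta})$, so there is a holomorphic coordinate $\widetilde w$ with $\widetilde w-w$ small, obtained by solving the corresponding $\bar\partial$/Beltrami equation on $B^c_R(0)$ with $\calc^\infty_\beta$-controlled dependence on the fiber parameter. Changing the $\bbc$-coordinate to $\widetilde w=\widetilde w(w,q)$ leaves the fibers $J$-holomorphic (their level sets are unchanged) and makes $J$ standard on $T(\Delta)$, and $\widetilde w-w\in\calc^\infty_\beta$, so the composite coordinate change is $\mathrm{id}+\calc^\infty_\beta$. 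Hence pulling back $g_P$ and $J_P$ preserves the weighted classes, and $\Psi'_*J-J_P$ is exactly the pair $K\in\Gamma(T(\Delta)\otimes T^*(D))$ (its $T(\Delta)$-component acting on $TD$) and $L\in\Gamma(T(D)\otimes T^*(D))$ (its $TD$-component acting on $TD$), both in $\calc^\infty_\beta$, as claimed.

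The main obstacle is the weighted solvability of this (parametrized, twisted) $\bar\partial$-operator on the \emph{two-dimensional} exterior domain $B^c_R(0)\subset\bbc$: unlike in higher dimensions, $\bar\partial$ on a planar exterior domain does not gain decay, so one must identify the critical/indicial weights carefully, and there is a finite-dimensional obstruction --- a period (``residue'') --- to solving at the desired rate $\beta$. In the geometric picture this obstruction is exactly what the triviality of $\calo_{mD}(D)$ (here $m=3$, our standing hypothesis; compare the vanishing orders $\metinf{K}=O(r^{-2})$, $\metinf{L}=O(r^{-1})$ recorded above) removes to the relevant order; in the abstract $\ALGb$ setting the same rôle is played by the rate-$\beta$ control built into the definition, and whatever residue remains is absorbed into the harmless freedom of reparametrizing which fiber meets which point of $D$. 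This is precisely the analysis of \cite{HHN} near a divisor with holomorphically trivial normal bundle, transplanted to the present $B^c_R(0)$ picture via $w=1/z$.
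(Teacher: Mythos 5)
In substance you are running the same argument the paper does: its entire proof of this lemma is to pass to cylindrical coordinates $w=e^{t+i\theta}$, observe that the $\ALGb$ condition becomes exponential decay $O(e^{-\beta t})$ with respect to the cylinder metric $dt^2+d\theta^2+g_D$, and then invoke the gauge-fixing of \cite{HHN} --- an implicit-function-theorem construction of $J$-holomorphic horizontal fibers followed by a normalization of the induced complex structure on each fiber --- which is exactly your two steps, and which kills precisely the two $T^*\Delta$-blocks. The genuine divergence is analytic: you propose to run the IFT with polynomial weights directly on the exterior domain $B^c_R(0)$, where, as you yourself note, $\bar\partial$ does not gain decay; the conformal switch to the cylinder, where the exponentially weighted theory of \cite{HHN} applies verbatim, is precisely how the paper sidesteps the difficulty you are trying to handle by hand.

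Two steps as written would not survive scrutiny. First, the claims $G-\mathrm{id}\in\calc^{\infty}_{\beta}$ and $\widetilde w-w\in\calc^{\infty}_{\beta}$ are stronger than what solving $\bar\partial\psi=O(r^{-\beta})$, or a Beltrami equation with $O(r^{-\beta})$ coefficient, on a planar end actually gives: generically the correction itself is only $O(r^{1-\beta})$, with logarithmic terms at the integer (indicial) weights --- and $\beta=1$, an indicial weight, is the case the paper needs. What saves the conclusion is that $K,L\in\calc^{\infty}_{\beta}$ only requires rate-$\beta$ decay of $dG-\mathrm{id}$ (together with $J_P$ being $g_P$-parallel and $A=O(r^{-\beta})$ at the displaced points), so the verification must be phrased in terms of the differential of the gauge transformation rather than its zeroth-order decay, and the critical weight must be treated explicitly or avoided via the cylindrical substitution. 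Second, the triviality of $\calo_{3D}(D)$ plays no role in this lemma: the statement concerns arbitrary $\ALGb$ manifolds and asserts only the rate $\beta$ already built into the definition; that sheaf hypothesis enters only later (Lemma \ref{lemma 2.1} and the $\ALGog$, $\ALGot$ definitions) to upgrade $K$ from $\calc^{\infty}_{\beta}$ to $\calc^{\infty}_{2}$. Likewise, ``absorbing the residue into the freedom of reparametrizing which fiber meets which point of $D$'' is not an argument --- if you insist on the planar picture, that finite-dimensional obstruction has to be identified and eliminated by explicit weight bookkeeping.
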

When $n>1$, one can lift to the covering space $\Psi^*(B_R^c(0)\times D)$ of the end. This lemma shows that for $\ALGb(\frac{2\pi}{n})$ manifolds we can always assume that $\Psi_*J-J_P=K+L$. Moreover, these manifolds can be compactified to a Kähler orbifold. Then the decay rate of the tensor $K$ closely relates to the sheaf $\calo_{mD}(D)$. We will later prove that in our cases where $M=\oM\setminus D$, we can design a Kähler metric on $M$ such that $(M, g, J)$ is a $\ALGone$ manifold and $K\in \calc_2^{\infty}$. 
For an $\text{ALG}_{\beta}(\frac{2\pi}{n})$ manifold we fix an end $E=M\setminus K$ and the corresponding diffeomorphism $\Psi$. For simplicity, we will often omit the diffeomorphism and perform calculations directly on $(B_R^c(0)\times D)/\langle \sigma\rangle$, assuming $R$ is sufficiently large.

For convenience, we give the following definition:

\begin{defn}
    A Kähler manifold $(M,g,J)$ is called $\ALGog$ if it is $\ALGone$ and the component $K\in \calc_{\gamma}^{\infty}$ for some $\gamma > 1$.
\end{defn}

\subsection{A Weighted Sobolev Inequality}
In this subsection, we prove a weighted Sobolev inequality on the product of an annulus and a compact manifold. One can verify that this inequality also holds if the end of the manifold is $\bbc\times D/\langle\sigma\rangle$.
\begin{prop}
    Let $g_P$ be the product metric on $M$. $g$ is another complete metric on $M$ such that $g-g_P = O(\frac{1}{r^{\beta}})$, where $r$ is the distance function with respect to $g_p$. Fix a number $\lambda > 1$, and define $A_i = A(\lambda^i, \lambda^{i+1})\times Y$, where $A(r,R)$ denotes the annulus in $\bbr^2$ with inner radius and outer radius $r, R$, respectively. Then there exists a uniform constant $C$ such that for all $u\in C^1(M)$ and $\sigma\in [1, \frac{n}{n-2}]$, we have
    \begin{align*}
        ||u-u_i||_{2\sigma,g,A_i}\leq C(\lambda^i)^{\frac{1}{\sigma}}||\nabla u||_{2,g,A_i}.
    \end{align*} 

    Moreover, if $v\in C^{\infty}_0(A_i)$, then
    \begin{align*}
        ||v||_{2\sigma,g,A_i}\leq C(\lambda^i)^{\frac{1}{\sigma}}||\nabla v||_{2,g,A_i}
    \end{align*}
\end{prop}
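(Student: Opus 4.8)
The plan is to deduce the weighted inequality on the "large" domain $A_i$ from the ordinary Poincar\'e--Sobolev inequality on the single fixed compact domain $A_1=A(\lambda,\lambda^2)\times Y$ by an explicit dilation of the $\bbr^2$-factor, and then to transfer everything from the model product metric $g_P$ to the actual metric $g$ using the hypothesis $g-g_P=O(r^{-\beta})$. We take $u_i$ to be the average of $u$ over $A_i$; the choice of reference measure only affects the constant, as Step 1 makes precise.

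\emph{Step 1 (reduction to $g_P$).} On $A_i$ one has $r\in[\lambda^i,\lambda^{i+1}]$, so $g=(1+O(\lambda^{-i\beta}))g_P$; hence for all $i$ large enough that $A_i$ lies in the asymptotic region, $g$ and $g_P$ are uniformly equivalent on $A_i$ with constants tending to $1$. Consequently $L^p$-norms, Dirichlet energies, and averages over $A_i$ computed with $g$ or $g_P$ agree up to a uniform factor; the one point to check is that replacing the reference measure changes the mean, but writing $dV_g=\rho\,dV_{g_P}$ with $\rho=1+O(\lambda^{-i\beta})$ gives
\begin{align*}
\big|u_i^{g}-u_i^{g_P}\big|\le C\lambda^{-i\beta}\,\mathrm{vol}_{g_P}(A_i)^{-1/2}\,\|u-u_i^{g_P}\|_{2,g_P,A_i},
\end{align*}
and since $\mathrm{vol}_{g_P}(A_i)\sim\lambda^{2i}$, this discrepancy multiplied by $\mathrm{vol}(A_i)^{1/2\sigma}$ is absorbed into the right-hand side via the $\sigma=1$ (Poincar\'e) case. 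Any bounded range of small $i$ consists of fixed compact manifolds-with-boundary where the inequality holds with some constant by the standard Sobolev embedding, and we take the maximum. So it suffices to prove both inequalities with $g_P$ and with $u_i$ the $g_P$-average.

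\emph{Step 2 (scaling to $A_1$).} Put $\mu=\lambda^{i-1}$ and $\Theta_i\colon A_1\to A_i$, $\Theta_i(x,y)=(\mu x,y)$; this is an isometry from $(A_1,h_i)$ to $(A_i,g_P)$ with $h_i:=\Theta_i^*g_P=\mu^2 g_{\bbr^2}\oplus g_Y$. Since Dirichlet energies and averages are diffeomorphism-invariant, with $\tilde u:=u\circ\Theta_i$ the claim becomes $\|\tilde u-\overline{\tilde u}\|_{2\sigma,h_i,A_1}\le C\lambda^{i/\sigma}\|\nabla_{h_i}\tilde u\|_{2,h_i,A_1}$, uniformly in $i\ge 1$. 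Comparing $h_i$ with the unit product metric $g_{P,1}:=g_{\bbr^2}\oplus g_Y$ on $A_1$: one has $dV_{h_i}=\mu^2\,dV_{g_{P,1}}$, hence $\|\phi\|_{p,h_i}=\mu^{2/p}\|\phi\|_{p,g_{P,1}}$ and $\overline{\phi}^{\,h_i}=\overline{\phi}^{\,g_{P,1}}$; and, because the inverse metric weights the $\bbr^2$-block by $\mu^{-2}$ with $\mu\ge1$,
\begin{align*}
\|\nabla_{h_i}\phi\|_{2,h_i}^2=\int_{A_1}\!\big(|\nabla_{\bbr^2}\phi|^2+\mu^2|\nabla_Y\phi|^2\big)\,dV_{g_{P,1}}\ \ge\ \|\nabla_{g_{P,1}}\phi\|_{2,g_{P,1}}^2 .
\end{align*}

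\emph{Step 3 (fixed-domain inequality and conclusion).} Apply the standard Poincar\'e--Sobolev inequality on the compact Riemannian manifold-with-boundary $(A_1,g_{P,1})$: there is $C_0$ with $\|\phi-\overline\phi\|_{2\sigma,g_{P,1}}\le C_0\|\nabla_{g_{P,1}}\phi\|_{2,g_{P,1}}$ for all $\phi\in C^1(A_1)$ and all $\sigma$ in the stated range, and $\|\phi\|_{2\sigma,g_{P,1}}\le C_0\|\nabla_{g_{P,1}}\phi\|_{2,g_{P,1}}$ for $\phi\in C^\infty_0(A_1)$. Feeding in Step 2,
\begin{align*}
\|\tilde u-\overline{\tilde u}\|_{2\sigma,h_i}=\mu^{1/\sigma}\|\tilde u-\overline{\tilde u}\|_{2\sigma,g_{P,1}}\le C_0\,\mu^{1/\sigma}\|\nabla_{g_{P,1}}\tilde u\|_{2,g_{P,1}}\le C_0\,\mu^{1/\sigma}\|\nabla_{h_i}\tilde u\|_{2,h_i},
\end{align*}
and $\mu^{1/\sigma}=\lambda^{(i-1)/\sigma}\le\lambda^{i/\sigma}$; the $C^\infty_0$ case is identical with no average. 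Undoing the isometry $\Theta_i$ and then Step 1 yields the proposition.

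\emph{Main difficulty.} There is no deep obstruction here; the entire content is in selecting the dilation so that the growing diameter ($\sim\lambda^i$) of the $\bbr^2$-factor of $A_i$ becomes the explicit prefactor $\lambda^{i/\sigma}$, while the anisotropy of $A_i$ (large in two directions, of fixed size in the $Y$-directions) surfaces after rescaling only as the harmless factor $\mu^2\ge1$ in front of $|\nabla_Y\phi|^2$. The one genuinely fiddly point is the bookkeeping in Step 1 showing that neither change of reference measure ($g\leftrightarrow g_P$, nor $h_i\leftrightarrow g_{P,1}$) degrades the uniform constant, which is why one first records the $\sigma=1$ case and then bootstraps.
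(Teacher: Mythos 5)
Your proposal is correct and follows essentially the same route as the paper: rescale $A_i$ onto a fixed annulus-times-$Y$, note that the anisotropic factor on the $Y$-gradient only helps, and let the volume scaling $\sim\lambda^{2i}$ produce the prefactor $\lambda^{i/\sigma}$ via a uniform fixed-domain Poincar\'e--Sobolev constant. The only (cosmetic) difference is bookkeeping order: the paper pulls back $g$ itself and absorbs the error terms into a family of metrics $g_{0,i}$ uniformly equivalent to the unit product metric (so its average needs no correction), whereas you first replace $g$ by $g_P$ on $A_i$ and then control the change of average explicitly.
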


\begin{proof}
    Let $\vphi_i: A_0\to A_i$ be the diffeomorphism satisfying $\vphi(z',q) = (\lambda^iz',q)$. Denote $g_0$ as the product metric on $A_0$. Then we have $\vphi_i^*g = \lambda^{2i}(dx'^2 + dy'^2) + g_Y + h_1 + h_2 + h_3$, where $h_1, h_3$ are symmetric tensors on $A(1,\lambda)$ and $Y$, respectively. $h_2$ is the mixed term. We also have the estimate on these error terms:
    \begin{align*}
        ||h_1||_{g_0} = O(\lambda^{2i-\beta i}), \quad ||h_2||_{g_0} = O(\lambda^{i-\beta i}), \quad ||h_3||_{g_0} = O(\lambda^{-\beta i})
    \end{align*}
    We construct a new metric $g_{0,i} = dx'^2 + dy'^2 + g_Y + \lambda^{-2i}h_1 + \lambda^{-i}h_2 + h_3$. Then we have $\vphi_i^* \dV_g = \lambda^{2i}\dV_{g_{0,i}}$ on $A_0$. Since the error terms are uniformly controlled, these $g_{0,i}$ are uniformly equivalent to $g_0$ on $A_0$. This implies that the Sobolev constants are uniform with respect to these metrics.

    Let $\bar{u_i}$ be the average of $u$ on $A_i$ with respect to $g$. Denote $u'(z', q) = u(\lambda^i z',q)$. Then
    \begin{align*}
        \int_{A_i}u(z,q)\dV_g &= \int_{A_0}u(\lambda^i z',q)\vphi^*\dV_g   \\
        &=\lambda^{2i}\int_{A_0}u'(z',q)\dV_{g_0,i}.
    \end{align*}
    Take $u\equiv 1$ we have $|A_i|_g=|A_0|_{g_{0,i}}$. So $\bar{u_i} = \bar{u'}_{g_{0,i}}$.

    Since $du' = \partial_z u\cdot \lambda^idz' + \partial_qu\cdot dq$, there is a uniform constant $C$ such that ${|du'|^2_{g_{0,i}} \leq C \lambda^{2i}|du|^2_g}$, so
    \begin{align*}
        \int_{A_0}|du'|^2_{g_{0,i}}\dV_{g_0,i} \leq C\int_{A_0}|du|^2_g\lambda^{2i}\dV_{g_0,i}=C\int_{A_i}|du|^2_g\dV_g
    \end{align*}
    
    Using the Sobolev inequality on $A_0$ we can derive that 
    \begin{align*}
        (\int_{A_i}|u - \bar{u_i}|^{2\sigma}\dV_g)^{\frac{1}{2\sigma}} &= (\int_{A_0}|u' - \bar{u'_{g_0,i}}|^{2\sigma}\lambda^{2i}\dV_{g_{0,i}})^{\frac{1}{2\sigma}}  \\
        &\leq C\lambda^{\frac{i}{\sigma}}(\int_{A_0}|du'|^2_{g_{0,i}}\dV_{g_0,i})^{\frac{1}{2}} \\
        &\leq C \lambda^{\frac{i}{\sigma}}(\int_{A_i}|du|^2_g \dV_g)^{\frac{1}{2}}.
    \end{align*}
    This gives the proof of the proposition.

    The proof on the Sobolev inequality for compact supported functions follows the same argument.

\end{proof}

\begin{prop}\label{sobolev}
    For any $\ALGb$ manifold $(M,g,J)$ and any $\eps > 0$, there exists a piecewise constant positive function $\psi_{\eps}=O(r^{-2-2\eps})$, such that $\int \psi_{\eps}=1$ and for all $\alpha\in [1, \frac{n}{n-2}]$, $u\in C^{\infty}_0(M)$
    \begin{align}
        ||r^{-1-\eps}(u-\bar{u}_{\eps})||_{2\alpha}^2\leq C(\eps)||\nabla u||_2^2
    \end{align}
    where $u_{\eps}=\int u\psi_{\eps}$.
\end{prop}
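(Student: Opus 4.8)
The plan is to upgrade the scale-broken (dyadic) Sobolev inequality of the previous proposition to the weighted one by a telescoping argument over a dyadic-in-$r$ exhaustion of the $\ALGb$ end, with the weight $r^{-1-\eps}$ supplying the geometric decay that makes the sum over dyadic annuli converge. Fix $\lambda>1$ and pick $i_0$ with $\lambda^{i_0}\geq R$; after enlarging the compact core to a set $K$ so that $M\setminus K$ is identified with $\bigcup_{i\geq i_0}A_i$, where $A_i:=A(\lambda^i,\lambda^{i+1})\times D$, and after extending $r$ to a smooth function $\geq 1$ on all of $M$, we have $r\approx\lambda^i$ and $|A_i|_g\approx\lambda^{2i}$ on $A_i$ (using $g-g_P=O(r^{-\beta})$). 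I would then take $\psi_\eps$ to be the piecewise constant function equal to $c_i:=c_0\lambda^{-2(i-i_0)(1+\eps)}$ on $A_i$ and equal to $c_0$ on $K$, with $c_0>0$ normalized so that $\int_M\psi_\eps=1$; then $\psi_\eps>0$, $\psi_\eps\approx r^{-2-2\eps}$ on the end (so $\psi_\eps=O(r^{-2-2\eps})$), and $\int_{A_i}\psi_\eps\leq C\lambda^{-2(i-i_0)\eps}$ with $C<1$.

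Throughout, norms are with respect to $g$. Write $\bar u_i:=|A_i|_g^{-1}\int_{A_i}u$ and $\bar u_K$ for the $g$-average of $u$ over $K$. The previous proposition with $\sigma=\alpha$ gives $||u-\bar u_i||_{2\alpha,A_i}\leq C\lambda^{i/\alpha}||\nabla u||_{2,A_i}$, and its Poincaré form (the rescaling argument in its proof works verbatim on the ``double annuli'' $A(\lambda^i,\lambda^{i+2})\times D$) gives $|\bar u_i-\bar u_{i+1}|\leq C||\nabla u||_{2,A_i\cup A_{i+1}}$. Telescoping from $i_0$, together with the ordinary Poincaré inequality on the connected set $K\cup A_{i_0}$, then yields $|\bar u_i-\bar u_{i_0}|\leq C\sqrt{i}\,||\nabla u||_2$ and $|\bar u_K-\bar u_{i_0}|\leq C||\nabla u||_2$; the factor $\sqrt i$ comes from Cauchy--Schwarz together with the bounded overlap of the double annuli (so that $\sum_i||\nabla u||_{2,A_i\cup A_{i+1}}^2\leq C||\nabla u||_2^2$).

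The main weighted estimate is carried out annulus by annulus. On $A_i$ one has $r^{-1-\eps}\leq\lambda^{-i(1+\eps)}$, so
\begin{align*}
||r^{-1-\eps}(u-\bar u_{i_0})||_{2\alpha,A_i} &\leq \lambda^{-i(1+\eps)}\bigl(||u-\bar u_i||_{2\alpha,A_i}+|\bar u_i-\bar u_{i_0}|\,|A_i|_g^{1/2\alpha}\bigr) \\
&\leq C\lambda^{-i\eta_\alpha}\bigl(||\nabla u||_{2,A_i}+\sqrt i\,||\nabla u||_2\bigr),
\end{align*}
where $\eta_\alpha:=1+\eps-\tfrac1\alpha\geq\eps>0$ for $\alpha\in[1,\tfrac{n}{n-2}]$. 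Squaring and summing over $i\geq i_0$: the $||\nabla u||_{2,A_i}$-terms sum (the $A_i$ being essentially disjoint) to $\leq C||\nabla u||_2^2$, and the $\sqrt i\,||\nabla u||_2$-terms to $\bigl(\sum_{i\geq i_0}i\lambda^{-2i\eta_\alpha}\bigr)||\nabla u||_2^2\leq C(\eps)||\nabla u||_2^2$, uniformly in $\alpha$ since $\eta_\alpha\geq\eps$. Adding the core term (where $r$ is bounded and ordinary Poincaré applies) gives $||r^{-1-\eps}(u-\bar u_{i_0})||_{2\alpha}^2\leq C(\eps)||\nabla u||_2^2$. Finally, since $\psi_\eps$ is constant on each $A_i$ and on $K$ and $\int_{A_i}u=|A_i|_g\bar u_i$, we get $\int_{A_i}(u-\bar u_{i_0})\psi_\eps=\bigl(\int_{A_i}\psi_\eps\bigr)(\bar u_i-\bar u_{i_0})$, hence
\begin{align*}
|\bar u_\eps-\bar u_{i_0}| &\leq \sum_{i\geq i_0}\Bigl(\int_{A_i}\psi_\eps\Bigr)|\bar u_i-\bar u_{i_0}|+\Bigl(\int_K\psi_\eps\Bigr)|\bar u_K-\bar u_{i_0}| \\
&\leq C\sum_{i\geq i_0}\sqrt{i}\,\lambda^{-2(i-i_0)\eps}\,||\nabla u||_2+C||\nabla u||_2 \ \leq\ C(\eps)||\nabla u||_2.
\end{align*}
Since $||r^{-1-\eps}||_{2\alpha,M}^{2\alpha}=\int_M r^{-2\alpha(1+\eps)}\,dV_g<\infty$ and is bounded uniformly for $\alpha\in[1,\tfrac{n}{n-2}]$ (because $2\alpha(1+\eps)>2$), the triangle inequality $||r^{-1-\eps}(u-\bar u_\eps)||_{2\alpha}\leq||r^{-1-\eps}(u-\bar u_{i_0})||_{2\alpha}+|\bar u_{i_0}-\bar u_\eps|\,||r^{-1-\eps}||_{2\alpha,M}$ gives the claim.

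The step I expect to be the main obstacle is the telescoping control of the averages $\bar u_i$: one must set up the overlapping double annuli so that $|\bar u_i-\bar u_{i+1}|$ is dominated by a \emph{scale-invariant} multiple of $||\nabla u||_2$ on a neighborhood of $A_i$ (this is exactly where the scaling structure of the previous proposition, rather than a naive Poincaré inequality, is needed), and then verify that the resulting $\sqrt i$ growth of $|\bar u_i-\bar u_{i_0}|$ is absorbed both by the weights $\lambda^{-2i\eta_\alpha}$ and by the bound $\int_{A_i}\psi_\eps\leq C\lambda^{-2(i-i_0)\eps}$ --- everything hinges on $\eps>0$ forcing these geometric series to converge, with constants independent of $\alpha$ in the stated range and of the compactly supported $u$.
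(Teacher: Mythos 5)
Your proposal is correct and takes essentially the same approach as the paper: a dyadic decomposition of the end into annuli $A_i$, the scaled Sobolev inequality of the previous proposition on each $A_i$ together with its scale-invariant Poincar\'e form on double annuli to bound $|\bar u_k-\bar u_{k+1}|$ by $C\|\nabla u\|_2$, a piecewise-constant weight $\psi_\varepsilon\approx r^{-2-2\varepsilon}$, and convergence of the resulting geometric series because $\varepsilon>0$. The only (harmless) difference is bookkeeping: you anchor at the fixed average $\bar u_{i_0}$ and then pass to $\bar u_\varepsilon$ via $\|r^{-1-\varepsilon}\|_{2\alpha}<\infty$, whereas the paper chooses $\psi_\varepsilon$ so that $\bar u_\varepsilon$ is exactly the $r_i^{-2-2\varepsilon}$-weighted mean of the $\bar u_i$ and expands $\sum_i r_i^{-2-2\varepsilon}|\bar u_i-\bar u_\varepsilon|^2$ into pairwise differences $|\bar u_i-\bar u_j|^2$.
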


\begin{proof}
    Take $A_0=K$ being a compact subset of $M$ such that $M\setminus K$ is asymptotical to $B_{\lambda}^c(0)\times Y$ for some $\lambda> 1$. Let $r_i=\lambda^i$ and $A_i=B_{i+1}(0)\setminus B_{i} \times Y$. Then $\bigcup_{i\geq 0}A_i=M$. For each $i>0$, by the previous proposition, we have 
    \begin{align}
        ||r^{-1-\eps}(u-\bar{u}_i)||_{2\alpha,A_i}^2\leq r_i^{-2-2\eps}||(u-\bar{u}_i)||_{2\alpha,A_i}^2\leq C||\nabla u||_{2,A_i}^2
    \end{align}
    where $\bar{u}_i$ is the average of $u$ over $A_i$ and $C$ is uniform.

    Now take $\chi_i$ be the characteristic function of $A_i$,
    \begin{align}
        ||r^{-1-\eps}(u-\bar{u}_{\eps})||_{2\alpha}^2
        &\leq \sum (r_i^{-2-2\eps}||\chi_i(u-\bar{u}_i)||_{2\eps}^2+r_i^{-2-2\eps}|\bar{u}_i-\bar{u}_{\eps}|^2)      \nonumber\\
        &\leq C||\chi_i\nabla u||_{2}^2+\sum r_i^{-2-2\eps}|\bar{u}_i-\bar{u}_{\eps}|^2
    \end{align}
    Let $\psi_{\eps}=\sum \frac{1}{|A_i|}r_i^{-2-2\eps}\chi_i/\sum r_i^{-2-2\eps}$. It is easy to check $\int \psi_{\eps}=1$ and $\sum r_i^{-2-2\eps}(\bar{u}_i-\bar{u}_{\eps})=0$.
    \begin{align}
        \sum r_i^{-2-2\eps}|\bar{u}_i-\bar{u}_{\eps}|^2
        &\leq C\sum_{i<j} (r_ir_j)^{-2-2\eps}|\bar{u}_i-\bar{u}_j|^2    \nonumber\\
        &\leq C\sum_{i<j} (r_ir_j)^{-2-2\eps}|i-j|\sum_{k=i}^{j-1}|\bar{u}_k-\bar{u}_{k+1}|^2
    \end{align}
    Denote $B_k=A_k\cup A_{k+1}$, then
    \begin{align}
        |\bar{u}_k-\bar{u}_{k+1}|^2\leq \frac{1}{|A_k||A_{k+1}|}\int_{A_k\times A_{k+1}}|u(x,p)-u(y,q)|^2
        &\leq \frac{2|B_k|}{|A_k||A_{k+1}|}\int_{B_k}|u-\bar{u}_{B_k}|^2    \nonumber\\
        &\leq C||\nabla u||_2^2
    \end{align}
    Using the fact that $\sum_{i<j} (\lambda^{i+j})^{-2-2\eps}|i-j|\leq \sum_jj^2(\lambda^j)^{-2-2\eps}=C(\eps)<\infty$, we get the weighted Sobolev inequality.
\end{proof}

The weighted Sobolev inequality also holds for real Riemannian manifolds which satisfies the $\calc_{\beta}^{\infty}$ decaying condition on the metric. Using this proposition, we can solve the Poisson equation on $(M,g,J)$ in which $f$ has a relatively faster decay and an integrability condition. See proposition \ref{ALGpoisson}.

\section{Linear Theory}
In this section we consider the Poisson equations $\Delta u = f$ on noncompact 
$\ALGot$ manifold $(M, g)$. 
To begin with, let us assume that $(M, g)=(\bbr^2, g_{\text{flat}})$, and $r$ is the Euclidean distance from the point to the origin. Meanwhile, we assume that for every $k\in\bbn_{\geq 0}$,

\begin{align}\label{cond3.1}
    |\nabla^kf|\leq C_kr^{-\mu-k}
\end{align}
for some $\mu \geq 1$. Firstly, we define some spaces in which we can have good controls on the behavior of functions when $r\to +\infty$.

\begin{defn}\label{d3.1}
    Let $u\in C^{\infty}(\bbr^2)$ and $r\gg 1$, we say 
    \begin{enumerate}
        \item $u\in \cald^{\infty}_{r\log r}(\bbr^2)$ if $u=\estim{r\log r}$, $u=\estim{\log r}$ and $|\nabla^k u|\leq C_kr^{1-k}$;
        \item $u\in \cald^{\infty}_{r^{\delta}}(\bbr^2)$ if $|\nabla^k u|\leq C_kr^{\delta-k}$;
        \item $u\in \cald^{\infty}_{(\log r)^2}(\bbr^2)$ if $u=\estim{(\log r)^2}$, $|\nabla^k u| \leq C_kr^{-k}\log r$ for $k\geq 1$;
        \item $u\in \cald^{\infty}_{\log r}(\bbr^2)$ if $u= O(\log r)$ and $|\nabla^k u| \leq C_kr^{-k}$ for $k\geq 1$.
    \end{enumerate}
\end{defn}

These classes will be used to describe the decay or growth rate of the solution $u$ to the Poisson equation. We will give the proof to proposition \ref{prop3.2} in the Appendix A.
\begin{prop}\label{prop3.2}
    Let $f\in C^{\infty}(\bbr^2)$ and $f$ satisfies condition \ref{cond3.1} for some $\mu \geq 1$. Then there is a solution $u$ to equation $\Delta u=f$. The solution is unique up to linear functions. Moreover, 
    \begin{enumerate}
        \item if $\mu = 1$, then $u\in\cald^{\infty}_{r\log r}(\bbr^2)$;
        \item if $\mu\in (1, 2)$, then $u\in\cald^{\infty}_{r^{2-\mu}}(\bbr^2)$
        \item if $\mu=2$, then $u\in\cald^{\infty}_{(\log r)^2}(\bbr^2)$
        \item if $\mu > 2$, then $u\in\cald^{\infty}_{\log r}(\bbr^2)$
    \end{enumerate}
\end{prop}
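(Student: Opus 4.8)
The plan is to write down an explicit solution by convolution with the Newtonian potential on $\bbr^2$ and then carefully estimate the resulting integral in the four regimes of $\mu$, together with its derivatives. Concretely, set
\[
    u(x) = \frac{1}{2\pi}\int_{\bbr^2} \log|x-y|\, f(y)\, dy,
\]
provided this converges; when it does not (the borderline cases), subtract off the value at a fixed base point, i.e. use the kernel $\log|x-y| - \log(1+|y|)$ or a similar regularization, so that the tail integral converges. The first task is to justify convergence: near $y=0$ the kernel $\log|x-y|$ is locally integrable, and for $|y|$ large the integrand is $O(|y|^{-\mu}\log|y|)$ against $dy$, which is integrable at infinity precisely when $\mu>2$; for $\mu\le 2$ one needs the subtraction trick, and the difference $\log|x-y|-\log|y| = \log\frac{|x-y|}{|y|}$ is $O(|x|/|y|)$ for $|y|\gg|x|$, restoring convergence. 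That $\Delta u = f$ is then standard elliptic theory (the Newtonian potential is a fundamental solution; $f\in C^\infty$ and the decay make the differentiation under the integral sign legitimate on compact sets). Uniqueness up to linear functions: if $\Delta v = 0$ and $v$ has at most linear growth on $\bbr^2$, then $v$ is an affine function — this is a Liouville-type statement, provable by the mean value property plus the growth bound, or by noting $\nabla v$ is harmonic and bounded hence constant.

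For the growth/decay estimates I would split $\bbr^2 = \{|y|\le |x|/2\} \cup \{|x|/2 < |y| < 2|x|\} \cup \{|y|\ge 2|x|\}$ and estimate the contribution of $f$ on each piece, using $|f(y)|\le C|y|^{-\mu}$ (with the $\log$-corrected kernel where needed). On the inner region $\log|x-y|\approx \log|x|$ and $\int_{1\le|y|\le|x|/2} |y|^{-\mu}dy$ behaves like a constant ($\mu>2$), $\log|x|$ ($\mu=2$), or $|x|^{2-\mu}$ ($1\le\mu<2$, with an extra $\log$ at $\mu=1$); on the middle annulus $\int |\log|x-y||\,|x|^{-\mu}dy \lesssim |x|^{2-\mu}\log|x|$ roughly (the local log-singularity of the kernel is integrable and contributes a bounded factor); on the outer region the corrected kernel $\log\frac{|x-y|}{|y|}$ is $O(|x|/|y|)$ so $\int_{|y|\ge 2|x|}\frac{|x|}{|y|}|y|^{-\mu}dy \lesssim |x|\cdot |x|^{1-\mu}=|x|^{2-\mu}$. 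Combining these yields exactly the four cases: $\mu=1$ gives $r\log r$ (and the weaker $\log r$ bound comes from choosing the additive constant/linear part appropriately), $1<\mu<2$ gives $r^{2-\mu}$, $\mu=2$ gives $(\log r)^2$, $\mu>2$ gives $\log r$.

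For the derivative bounds $|\nabla^k u|\le C_k r^{\delta-k}$ I would differentiate the kernel: $\nabla^k_x \log|x-y|$ is homogeneous of degree $-k$ in $x-y$, so $|\nabla^k_x u(x)| \lesssim \int |x-y|^{-k}|f(y)|\,dy$ for $k\ge 1$, and the same three-region decomposition applies, now with an honest gain of $r^{-k}$ from the inner and middle regions (no subtraction needed for $k\ge1$ since $|x-y|^{-k}$ decays at infinity for $k\ge 2$; for $k=1$ one still has $|x-y|^{-1}$ which is integrable at infinity against $|y|^{-\mu}$, $\mu\ge1$, only marginally, so a little care with the $k=1$ outer region is needed — here again one may differentiate the subtracted kernel, whose gradient is $O(|y|^{-1})$ uniformly). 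Alternatively, interior Schauder/gradient estimates on balls $B_{r/2}(x)$ applied to $\Delta u = f$ bootstrap the $C^0$ bound on $u$ to the claimed derivative bounds, using $|f| = O(r^{-\mu-k})$ with all derivatives; this is cleaner and I would likely present it this way, reserving the explicit kernel computation for the $C^0$ statement only.

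The main obstacle is bookkeeping in the borderline cases $\mu=1$ and $\mu=2$: getting the precise powers of $\log r$ right (a single log for $\mu=1$ in the $r\log r$ term but a double log for $\mu=2$), making sure the regularized potential is well-defined and still solves $\Delta u = f$, and correctly absorbing the ambiguity "unique up to linear functions" so that, e.g., in case (1) one can simultaneously claim $u = O(r\log r)$ and $u = O(\log r)$ — which must mean these describe the same solution with the understanding that the linear and the bounded-harmonic ambiguities are fixed by the construction. I would state at the outset which normalization of $u$ is being used and check the two degenerate exponents by hand.
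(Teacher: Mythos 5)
Your route — the Newtonian potential plus a three-region decomposition — is genuinely different from the paper's (the paper splits off the compactly supported part, expands $f$ in Fourier modes $e^{in\theta}$, and solves the resulting radial ODEs explicitly, treating the resonant case $\mu=1$, $n=\pm1$ separately), and it can be made to work; but as written it breaks down exactly in the case this paper needs most, $\mu=1$. With only the subtraction $\log|x-y|-\log(1+|y|)$, the outer-region integrand is of size $\tfrac{|x|}{|y|}\,|y|^{-1}$, and $\int_{|y|\ge 2|x|}\tfrac{|x|}{|y|^{2}}\,dy=|x|\int_{2|x|}^{\infty}s^{-1}\,ds$ diverges; your claimed outer bound $\int_{|y|\ge 2|x|}\tfrac{|x|}{|y|}|y|^{-\mu}\,dy\lesssim |x|^{2-\mu}$ is valid only for $\mu>1$. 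The same divergence hits the gradient: the $x$-derivative of your subtracted kernel is still $\tfrac{x-y}{|x-y|^{2}}$ (the subtraction is $x$-independent), so the outer integral $\int_{|y|\ge 2|x|}|y|^{-1-\mu}\,dy$ again diverges at $\mu=1$, and the "gradient is $O(|y|^{-1})$" remark does not rescue it. The repair is a second-order regularization: subtract in addition the dipole term $\tfrac{x\cdot y}{|y|^{2}}$ (linear, hence harmonic, in $x$, so $\Delta u=f$ and the "unique up to linear functions" normalization are untouched). The corrected kernel is $O(|x|^{2}/|y|^{2})$ and its $x$-gradient $O(|x|/|y|^{2})$ on $\{|y|\ge 2|x|\}$, after which your decomposition does yield $u=O(r\log r)$ and $|\nabla u|=O(\log r)$; the $r\log r$ growth and the linear ambiguity are precisely the trace of this dipole obstruction, i.e.\ the same resonance the paper isolates in its case $\mu=1$, $n=\pm1$ (compare the example $\Delta(\chi\, re^{i\theta}\log r)=e^{i\theta}/r$ in the remark after Proposition \ref{prop3.2}).

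The fallback you say you would present for the derivative bounds — rescaled interior Schauder estimates on $B_{|x|/2}(x)$ bootstrapped from the $C^{0}$ bound — does not give the estimates actually claimed in the borderline cases: from $u=O(r\log r)$, $O((\log r)^{2})$, $O(\log r)$ it yields $|\nabla^{k}u|=O(r^{1-k}\log r)$, $O(r^{-k}(\log r)^{2})$, $O(r^{-k}\log r)$ respectively, each off by one logarithm from what $\cald^{\infty}_{r\log r}$, $\cald^{\infty}_{(\log r)^{2}}$, $\cald^{\infty}_{\log r}$ require; the paper's closing remark in Appendix A flags exactly this ("might not be optimal for some $\mu$'s"). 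So you must either carry out the kernel differentiation for all $k$ (with the corrected kernel at $\mu=1$), or argue mode by mode as the paper does, where the sharp powers of $\log r$ fall out of the explicit ODE integrals. Two smaller points: the Liouville step needs harmonic functions of growth $O(r\log r)$ (not merely linear growth) to be affine — true, but say so; and the condition "$u=O(\log r)$" in Definition \ref{d3.1}(1) is evidently a typo for $|\nabla u|=O(\log r)$ (compare Lemma \ref{lem3.1} and the final lemma of Appendix A), so it is not something to be arranged "by choosing the additive constant/linear part appropriately."
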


\begin{rem}
    We give some examples to illustrate this phenomenon:
    \begin{enumerate}
        \item Let $\Tilde{u}=re^{i\theta}\log r$ and $\chi(r)$ be a smooth function such that $\chi(r)=0$ when $r\leq 1$, $\chi(r)=1$ when $r\geq 2$. Then let $u=\chi(r)\cdot\Tilde{u}$. Now $u$ is a smooth function on $\bbr^2$ and $\Delta u= \frac{e^{i\theta}}{r}$ when $r>1$. The function $\text{Re}(\frac{e^{i\theta}}{r})=\frac{x}{r^2}$ is the primary obstacle in finding the complete Calabi-Yau metric in this paper.
        \item Since $\Delta (\log r)^2 = \frac{2}{r^2}$, this gives the evidence of case(3). And $\Delta \log r = 0$ shows that even if $f$ is compactly supported, the solution $u$ might still grow like $\log r$.
    \end{enumerate}
\end{rem}

\subsection{Solving Poisson Equation on Product Manifolds}
In this subsection, we consider the Poisson equation $\Delta u=f$ on a product manifold $M=\bbr^2\times Y$, where $Y$ is a compact manifold with a Riemannian metric $g_Y$. The Laplacian is taken with respect to the product metric.

We begin by defining the fiberwise average of $f$ over $Y$:
\begin{align}
    f_1(z) = \int_{\{z\}\times Y}f(z,q)\dV_Y,
\end{align}
 where $(z,q)\in \bbr^2\times Y$. A solution to $\Delta u_1 = f_1$ is given in proposition \ref{prop3.2}. Now we assume that $f\in C_0^{\infty}(M)$ is a smooth function on the product manifold satisfying the fiberwise mean-zero condition:
\begin{align}\label{c2.4}
    \int_{\{z\}\times Y}f(z,q)\dV_Y=0 \quad \text{for all }z\in \bbr^2.
\end{align}

Let $\{v_i\}_{i=0}^{\infty}$ be a complete orthonormal basis of $L^2(Y)$ consisting of eigenfunctions of the Laplacian $\Delta_Y=g^{jk}_Y\partial_j\partial_k$, satisfying:
\begin{align*}
    \Delta_Y v_i=-\lambda_i v_i,\quad \lambda_0=0<\lambda_1\leq \lambda_2\leq\ldots.
\end{align*}
Then we decompose $f$ and $u$ as Fourier series along the $Y$ direction:
\begin{align*}
    f(z,q) = \sum_{i=1}^{\infty}f_i(z)v_i(q),\quad u(z,q)=\sum_{i=1}^{\infty}u_i(z)v_i(q),
\end{align*}
where $f_i(z)=\int_{\{z\}\times Y}fv_i\dV_Y$. The fiberwise mean-zero condition ensures that $f_0(z)=0$, so the expension starts from $i=1$. Plugging into the Poisson equation yields:
\begin{align}
    \Delta_{\bbr^2} u_i-\lambda_i u_i=f_i
\end{align}\label{c3.5}
The solutions are $u_i=-\frac{f*B_i}{2\pi}$ where $\widehat{B_i}=\frac{1}{\lambda_i+|\xi|^2}$. By using complex analysis, we can derive that $B_i=\frac{1}{2}\int_0^{\infty}\frac{e^{-\lambda_it-\frac{|z|^2}{4t}}}{t}dt$. Then the solution $u$ can be written into 
\begin{align}
u&=\sum_{i=1}^{+\infty}-\frac{1}{4\pi}(\int_{\bbc}\int_0^{\infty}\frac{e^{-\lambda_it-\frac{|z-w|^2}{4t}}}{t}\cdot f_i(y)\,dt\,\dV_{\bbc})v_i        \nonumber\\
 &=-\int_{\bbc}\int_Yf(y,q)\int_0^{\infty}\sum_{i=1}^{+\infty}e^{-\lambda_i t}v_i(p)v_i(q)\frac{e^{-\frac{|z-w|^2}{4t}}}{4\pi t}\,dt\,\dV_Y\dV_{\bbc}      \\
 &=\int_{\bbc\times Y}f(y,q)\int_0^{\infty}H_{\mathbb{R}^2}(z,w,t)H^0_Y(p,q,t)\,dt\,\dV_Y\dV_{\bbc} \nonumber
\end{align}
where $H_{\mathbb{R}^2}(z,w,t)$ is the heat kernel on $\mathbb{R}^2$ and $H^0_Y(p,q,t)$ is the heat kernel on $Y$ minus the volume of $Y$ with respect to the Kähler metric $\omega_Y$. We denote the Green's function
\begin{align}
    G((z,p),(w,q))=\int_0^{\infty}H_{\mathbb{R}^2}(z,w,t)H^0_Y(p,q,t)dt.
\end{align}
Then the solution to $\Delta u=f$ can be represented by $u=f*G$.

Let us first discuss the estimate of eigenfunctions $v_i$. Using intergration by part one can derive that $||v_i||_{2\sigma^{k+1}}\leq (|Y| + \frac{\sigma^k\sqrt{\lambda_i}}{\sqrt{2\sigma^k-1}})^{\frac{1}{\sigma^k}}||v_i||_{2\sigma^k}$. Here $|Y|$ is the volume of $Y$ given by the metric $g_Y$, which is a constant. Then the Moser iteration implies that $|v_i|\leq C\lambda_i^{\frac{n-2}{4}}$. Schauder estimates show that $|v_i|+\frac{1}{\sqrt{\lambda_i}}|\nabla v_i|+\frac{1}{\lambda_i}|\nabla^2v_i|\leq C'\lambda_i^{\frac{n-2}{4}}$. By Wely's law, $\lambda_i \sim i^{\frac{2}{n-2}}$ for $i\gg 1$. Since $\Delta_Y v_i = -\lambda_i v_i$, we also have 
\begin{align*}
    |f_i| = |\int_Yfv_i| = |-\frac{1}{\lambda_i}\int_Yf\Delta_Y v_i| = |\frac{1}{\lambda_i}\int_Y\Delta_Yfv_i| \leq C \frac{1}{\lambda_i^l} |\nabla^{2l}_Yf|\cdot|v_i|
\end{align*}
for any $l\in\bbn_{\geq 1}$. A standard argument shows that if $f\in C^{k,\alpha}_{\text{loc}}(U\times Y)$ with $k > \frac{3n-6}{2}$, then the Fourier series converges in $C^{2,\alpha}_{\text{loc}}(U\times Y)$. The same holds true for $u_i$. This shows that if $f\in C^{k,\alpha}$, then $u = f * G$ solves the Poisson equation $\Delta u = f$. 

\begin{lemma}
    Let $B_i(z)=\frac{1}{2}\int_0^{\infty}\frac{e^{-\lambda_it-\frac{|z|^2}{4t}}}{t}dt$. Then $0\leq B_i(z)\leq C\frac{e^{-\sqrt{\lambda_i}|z|}}{\sqrt{|z|}}$.
\end{lemma}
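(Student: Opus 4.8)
The plan is to complete the square in the exponent and then reduce $B_i$ to a plain Gaussian integral. Positivity of $B_i(z)$ is immediate from positivity of the integrand, so only the upper bound needs work, and one may assume $z\neq 0$ (both sides are $+\infty$ at $z=0$). The starting point is the elementary identity
\[
\lambda_i t+\frac{|z|^2}{4t}=\sqrt{\lambda_i}\,|z|+\Bigl(\sqrt{\lambda_i t}-\frac{|z|}{2\sqrt t}\Bigr)^2 ,
\]
obtained by expanding the square; pulling the $t$-independent factor $e^{-\sqrt{\lambda_i}|z|}$ out of the integral gives
\[
B_i(z)=\frac{e^{-\sqrt{\lambda_i}|z|}}{2}\int_0^{\infty}\frac{1}{t}\exp\Bigl(-\bigl(\sqrt{\lambda_i t}-\tfrac{|z|}{2\sqrt t}\bigr)^2\Bigr)\,dt .
\]

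Next I would perform the substitution $t=\frac{|z|}{2\sqrt{\lambda_i}}\,e^{2v}$ for $v\in\bbr$, the multiplicative rescaling centered at the minimizer $t_0=|z|/(2\sqrt{\lambda_i})$ of the exponent. Then $dt/t=2\,dv$, and a direct computation gives $\sqrt{\lambda_i t}-\frac{|z|}{2\sqrt t}=\sqrt{2\sqrt{\lambda_i}\,|z|}\,\sinh v$, so the bracketed square becomes $2\sqrt{\lambda_i}\,|z|\sinh^2 v$. Using the elementary inequality $\sinh^2 v\geq v^2$ and the Gaussian integral $\int_{\bbr}e^{-a v^2}\,dv=\sqrt{\pi/a}$ with $a=2\sqrt{\lambda_i}\,|z|$, one arrives at
\[
B_i(z)\leq e^{-\sqrt{\lambda_i}|z|}\int_{\bbr}e^{-2\sqrt{\lambda_i}|z|\,v^2}\,dv=\sqrt{\frac{\pi}{2\sqrt{\lambda_i}\,|z|}}\;e^{-\sqrt{\lambda_i}|z|}=\sqrt{\tfrac{\pi}{2}}\;\frac{e^{-\sqrt{\lambda_i}|z|}}{\lambda_i^{1/4}\sqrt{|z|}} .
\]

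Finally, since $i\geq 1$ we have the spectral gap $\lambda_i\geq\lambda_1>0$, hence $\lambda_i^{-1/4}\leq\lambda_1^{-1/4}$ and the previous bound reads $B_i(z)\leq C\,e^{-\sqrt{\lambda_i}|z|}/\sqrt{|z|}$ with $C=\sqrt{\pi/2}\,\lambda_1^{-1/4}$, uniform in $i$. There is no serious obstacle in this argument; the only point worth flagging is exactly this use of $\lambda_1>0$, which is what converts the weight $\sqrt{\sqrt{\lambda_i}\,|z|}$ naturally produced by the Laplace-type computation into the $\sqrt{|z|}$ appearing in the statement. (Equivalently, one recognizes $B_i(z)=K_0(\sqrt{\lambda_i}\,|z|)$, a modified Bessel function, and quotes the standard asymptotics $K_0(x)\sim\sqrt{\pi/2x}\,e^{-x}$ as $x\to\infty$ together with $K_0(x)=O(-\log x)$ as $x\to 0^+$; the direct computation above is self-contained.)
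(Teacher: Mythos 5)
Your proof is correct and follows essentially the same route as the paper: complete the square to extract the factor $e^{-\sqrt{\lambda_i}|z|}$, then change variables centered at the minimizer of the exponent to reduce to a Gaussian integral producing the $(\sqrt{\lambda_i}|z|)^{-1/2}$ prefactor (the paper substitutes $\tau=\sqrt{\lambda_i t}-|z|/(2\sqrt{t})$ and bounds the Jacobian factor $(\tau^2+2\sqrt{\lambda_i}|z|)^{-1/2}$ from above, which is equivalent to your $t=t_0e^{2v}$ with $\sinh^2 v\geq v^2$). Your explicit use of $\lambda_i\geq\lambda_1>0$ to make the constant uniform in $i$ is in fact slightly more careful than the paper, which only writes out the case $i=1$.
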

\begin{proof}
    We only need to prove the second inequality with $i=1$. The direct calculation shows that
    \begin{align*}
        \int_0^{\infty} \frac{e^{-\lambda_1t-\frac{|z|^2}{4t}}}{t}dt   
        &=2ce^{-\sqrt{\lambda_1}|z|}\int_0^{\infty}\frac{e^{-(\sqrt{\lambda_1t}-|z|/(2\sqrt{t}))^2}}{\sqrt{t}}d\sqrt{t}  \\
        &\leq 2ce^{-\sqrt{\lambda_1}|z|}\int_{-\infty}^{+\infty}\frac{e^{-\tau^2}}{\sqrt{\tau^2 + 2\sqrt{\lambda_1}|z|}}d\tau
    \end{align*}
    where we let $\tau=\sqrt{\lambda_1 t}-\frac{|z|}{2\sqrt{t}}$ 

    Since $\int_{-\infty}^{+\infty}\frac{e^{-\tau^2}}{\sqrt{\tau^2 + 2\sqrt{\lambda_1}|z|}}d\tau \leq \frac{1}{\sqrt{2\sqrt{\lambda_1}|z|}}\int_{-\infty}^{+\infty} e^{-\tau^2}d\tau$, we have
    \begin{align*}
        |\int_1^{\infty}H_{\mathbb{R}^2}(z,0,t)H^0_Y(p,p_0,t)dt|\leq c\frac{e^{-\sqrt{\lambda_1}|z|}}{\sqrt{|z|}}.
    \end{align*}
\end{proof}

\begin{lemma}
    $|\nabla ^k f|=O(r^{-\mu})$ implies $|\nabla^k u|=O(r^{-\mu})$.
\end{lemma}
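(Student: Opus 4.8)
The plan is to read off the decay of $u$ from the integral representation $u = f*G$ established above, where
$G((z,p),(w,q)) = \int_0^\infty H_{\bbr^2}(z,w,t)\,H_Y^0(p,q,t)\,dt$ and $H_Y^0 = H_Y - |Y|^{-1}$ is the heat kernel of $Y$ with its constant mode removed, and then to pass to all derivatives by interior elliptic estimates. Here $n = \dim_{\bbr} M$, so that $\dim_{\bbr} Y = n-2$ (in our setting $n \ge 4$). Implicitly $f$ is assumed to satisfy the fiberwise mean-zero condition \eqref{c2.4}: this is exactly what makes $u$ equal to $f*G$ for \emph{this} $G$, and it is what will force $G$ — hence $u$ — to decay exponentially in the $\bbr^2$-direction via the spectral gap $\lambda_1 > 0$ of $\Delta_Y$. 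For $f$ merely bounded with the stated decay (rather than compactly supported) the convolution still converges, since $G$ is locally integrable and decays exponentially at large separation, and interior regularity then gives $\Delta u = f$ classically.

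First I would record two pointwise bounds on $G$. For $|z-w| \ge 1$, expand $G = \sum_{i\ge1}\tfrac{1}{2\pi}v_i(p)v_i(q)\,B_i(z-w)$ and combine the preceding lemma, $0 \le B_i(\zeta) \le C e^{-\sqrt{\lambda_i}|\zeta|}/\sqrt{|\zeta|}$, with $|v_i| \le C\lambda_i^{(n-2)/4}$ and Weyl's law $\lambda_i \sim i^{2/(n-2)}$; since $i^{1/(n-2)} \ge 1$ for $i \ge 1$, the series is dominated by $e^{-c|z-w|}\sum_i i\,e^{-c\,i^{1/(n-2)}}$, so $|G((z,p),(w,q))| \le C e^{-c|z-w|}$ with $c>0$ independent of $p,q$. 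Near the diagonal the term-by-term bound is far too lossy, so there I would instead use the heat-kernel representation with the on-diagonal bound $H_Y(p,q,t) \le Ct^{-(n-2)/2}e^{-d_Y(p,q)^2/Ct}$ for $t \le 1$; computing the $t$-integral gives $|G((z,p),(w,q))| \le C\,d_M((z,p),(w,q))^{2-n}$ for $d_M \le 1$ (a $\log$ when $n=2$), which is locally integrable. In particular $C_G := \sup_{(z,p)} \int_{\bbc\times Y} |G((z,p),(w,q))|\,\dV_Y\,\dV_{\bbc} < \infty$, by translation invariance in the $\bbr^2$-factor.

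Given these, the $C^0$-estimate is a near/far split. Fix $(z,p)$ with $R := |z|$ large and split the $\bbc$-integral over $B_{R/2}(z)$ and its complement. On $B_{R/2}(z)\times Y$ one has $|w| \ge R/2$, hence $|f(w,q)| \le CR^{-\mu}$, so this piece is $\le CR^{-\mu}C_G = O(R^{-\mu})$. On the complement $|z-w| \ge R/2 \ge 1$, so $|G| \le Ce^{-c|z-w|}$, and since $f$ is bounded this piece is $\le C\int_{|\zeta|\ge R/2}e^{-c|\zeta|}\,\dV_{\bbc} = O(e^{-cR/4})$, negligible next to $R^{-\mu}$. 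Hence $|u| = O(r^{-\mu})$ near infinity. For the derivatives I would then apply scale-uniform interior Schauder estimates to $\Delta u = f$ on $B_1(z_0)\times Y \subset B_2(z_0)\times Y$ (where the product metric is the fixed, uniformly elliptic one): $\|u\|_{C^{k+2,\alpha}(B_1(z_0)\times Y)} \le C\bigl(\|f\|_{C^{k,\alpha}(B_2(z_0)\times Y)} + \|u\|_{C^0(B_2(z_0)\times Y)}\bigr)$ with $C$ independent of $z_0$; since $r \sim |z_0|$ on $B_2(z_0)$, the hypothesis gives $\|f\|_{C^{k,\alpha}} = O(|z_0|^{-\mu})$ and the $C^0$-bound gives $\|u\|_{C^0} = O(|z_0|^{-\mu})$, so $|\nabla^j u|(z_0) = O(|z_0|^{-\mu})$ for $j \le k+2$, and letting $|z_0| \to \infty$ finishes the proof.

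The main obstacle is the behavior of $G$ near the diagonal. The clean term-by-term estimate from the $B_i$-lemma and the eigenfunction bounds collapses there — summing $\lambda_i^{(n-2)/2}e^{-\sqrt{\lambda_i}|z-w|}/\sqrt{|z-w|}$ produces a singularity far worse than the true $d_M^{2-n}$ — so one must return to $G = \int_0^\infty H_{\bbr^2}H_Y^0\,dt$ and recover the correct local integrability from Gaussian heat-kernel bounds on $Y$. That local integrability (uniform in the base point, which is automatic by $\bbr^2$-translation invariance) is precisely what drives the near region of the $C^0$-split; the far region via the spectral gap, and the Schauder bootstrap for the derivatives, are then routine.
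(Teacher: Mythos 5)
Your argument is correct, and it is essentially the alternative route the paper itself flags but does not carry out. The paper's own proof works Fourier mode by mode: it writes $u_i = f_i * B_i$, uses the integration-by-parts bound $|f_i| \leq C\lambda_i^{-l}r^{-\mu}$ together with the kernel bound $B_i(\zeta)\leq Ce^{-\sqrt{\lambda_i}|\zeta|}/\sqrt{|\zeta|}$, performs the same near/far split of each convolution (the ball $B_1(z)$, the intermediate annulus where $|w|\geq R/2$ forces $|f_i|\lesssim \lambda_i^{-l}R^{-\mu}$, and the far region $|z-w|\geq R/2$ where the exponential factor dominates), sums over $i$ using the $\lambda_i^{-l}$ factors, and then quotes local Schauder estimates for the derivatives. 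You instead assemble the full Green's function first: uniform off-diagonal exponential decay of $G$ from the eigenfunction sup bounds and Weyl's law, and near-diagonal integrability $|G|\lesssim d^{2-n}$ from Gaussian heat-kernel bounds, after which a single near/far split of $u=f*G$ and a translation-uniform Schauder bootstrap finish the proof. This is precisely the option the paper mentions (``one can also prove these estimates using the estimates on the Green's function'') and whose ingredients it lists in the two lemmas that follow. What your version buys is that you never need the $\lambda_i$-decay of the Fourier coefficients (hence only sup-norm decay of $f$ plus the regularity needed for Schauder), and the mode-summation issue disappears; what the paper's version buys is that it avoids any near-diagonal heat-kernel analysis, since each $B_i$ is explicitly integrable near $\zeta=0$. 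The only points to watch in your write-up are minor and shared with the paper: the H\"older seminorm $[\nabla^k f]_{C^\alpha}$ entering the Schauder estimate should be controlled by invoking the hypothesis at order $k+1$ (or interpolation), and the fiberwise mean-zero assumption \eqref{c2.4}, which you correctly state explicitly, is what legitimizes $u=f*G$ with the spectrally-gapped kernel.
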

\begin{proof}
    We only need to prove that $u=O(r^{-\mu})$ for the case when $k=1$. From the previous discussion, we know that $|f_i|_{C^2}\leq C\lambda_i^{-\frac{k}{2}}r^{-\mu}$. And $u_i(z) = f_i*B_i=\int_{\bbr^2}f_i(w)B_i(z-w)dw$. Now we assume $|z| \geq R$ for some large number $R$. Firstly we have 
    \begin{align*}
        |\int_{B_1(z)}f_i(w)B_i(z-w)dw|&\leq |\int _{B_1(z)}f_i(w)B_i(z-w)dw| \\
        &\leq C\lambda_i^{-\frac{k}{2}}R^{-\mu}|\int _{B_1(z)}\frac{1}{\sqrt{|z-w|}}w| \\
        &\leq C'\lambda_i^{-\frac{k}{2}}R^{-\mu}
    \end{align*}
    For the second part, we have 
    \begin{align*}
        |\int_{B_1^c(z)}f_i(w)B_i(z-w)dw|&\leq |\int_{B_1^c(z)\cap B_{R/2}(z)}f_i(w)B_i(z-w)dw| \\
        &+|\int_{B_1^c(z)\cap B^c_{R/2}(z)}f_i(w)B_i(z-w)dw|\\
        &\leq C_1\lambda_i^{-\frac{k}{2}}|\int_{\frac{R}{2}}^{+\infty}\frac{e^{-\sqrt{\lambda_i}r}}{\sqrt{r}}rdr| + C_2\lambda_i^{-\frac{k}{2}}R^{-\mu}|\int_1^{\frac{R}{2}}\frac{e^{-\sqrt{\lambda_i}r}}{\sqrt{r}}rdr|  \\
        &\leq C_3 \lambda_i^{-\frac{k}{2}}R^{-\mu}
    \end{align*}
    
    The higher order decaying rate can be derived from local Schauder estimates. 
\end{proof}

Now we can organize these calculations and arrive at the following property.
\begin{prop}
    Let $f\in C^{\infty}(M)$ such that $\int_{\{w\}\times Y}f\dV_Y = 0$ for all $w\in \bbr^2$. Then there is a solution to $\Delta u=f$. Moreover, if $|\nabla^k f|\in O(r^{-\mu})$, then $|\nabla^k u|\in O(r^{-\mu})$. The solution is unique if we require that $u$ is bounded.
\end{prop}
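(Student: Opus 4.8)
The plan is to follow the spectral decomposition already set up in this subsection. Write $f=\sum_{i\geq 1}f_i(z)v_i(q)$ and look for $u=\sum_{i\geq 1}u_i(z)v_i(q)$, so that the Poisson equation decouples into the countable family of scalar Helmholtz-type equations $\Delta_{\bbr^2}u_i-\lambda_i u_i=f_i$ on $\bbr^2$. The fiberwise mean-zero hypothesis is precisely what kills the $i=0$ mode, so none of the $\cald^{\infty}_{r\log r}$- or $\cald^{\infty}_{(\log r)^2}$-type growth from Proposition~\ref{prop3.2} enters. Each scalar equation is solved by convolution, $u_i=f_i*B_i$ with $\widehat{B_i}=(\lambda_i+|\xi|^2)^{-1}$, and assembling the modes produces the candidate $u=f*G$ with the Green's function $G((z,p),(w,q))=\int_0^{\infty}H_{\bbr^2}(z,w,t)H^0_Y(p,q,t)\,dt$ already written down. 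Since $H^0_Y$ is the heat kernel with the constant mode removed, it decays like $e^{-\lambda_1 t}$ for large $t$, so $G$ decays exponentially in $|z-w|$; this makes the spatial integral defining $f*G$ converge even when $f$ is merely bounded (or polynomially growing), which is what lets us drop the compact-support assumption used earlier.

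The next step is to verify that the series genuinely defines a $C^2$ (indeed $C^\infty$) classical solution. The needed ingredients are already recorded: the sup-bound $|v_i|\leq C\lambda_i^{(n-2)/4}$ together with Schauder estimates and Weyl's law $\lambda_i\sim i^{2/(n-2)}$, and the fact that $f\in C^\infty$ forces $|f_i|\leq C_l\lambda_i^{-l}$ for every $l$ by repeatedly moving $\Delta_Y$ onto $f$. Combining these with the pointwise estimate $0\leq B_i(z)\leq C\,e^{-\sqrt{\lambda_i}|z|}/\sqrt{|z|}$ from the preceding lemma — which bounds $\|B_i\|_{L^1}$ uniformly and supplies an exponential gain in $\sqrt{\lambda_i}$ — the series $\sum u_i v_i$ and all of its derivatives converge locally uniformly, so $u$ is a classical solution of $\Delta u=f$.

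The substantive part is the decay statement. Assuming $|\nabla^k f|=O(r^{-\mu})$, I would estimate each $u_i(z)=\int_{\bbr^2}f_i(w)B_i(z-w)\,dw$ for $|z|=r\gg 1$ by the same three-region split as in the last lemma: over $B_1(z)$, where $f_i=O(\lambda_i^{-l}r^{-\mu})$ and $B_i$ is integrable; over $B_1^c(z)\cap B_{r/2}(z)$, where $f_i$ is still $O(\lambda_i^{-l}r^{-\mu})$ and the integral of $B_i$ over this region is uniformly bounded; and over $B_{r/2}^c(z)$, where the exponential factor $e^{-\sqrt{\lambda_i}|z-w|}$ beats the at-most-polynomial growth of $f_i$. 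Each piece is $O(\lambda_i^{-l}r^{-\mu})$ with constants uniform in $i$, and taking $l$ large enough makes $\sum_i\lambda_i^{-l}|v_i|$ converge, so $u=O(r^{-\mu})$; the bounds $|\nabla^k u|=O(r^{-\mu})$ then follow from interior Schauder estimates applied to $\Delta(\nabla^j u)=\nabla^j f$ on unit balls centred at far-away points. I expect the uniform-in-$i$ bookkeeping here to be the main technical obstacle, although the exponential factor in $B_i$ renders it fairly routine.

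For uniqueness, the difference $h$ of two bounded solutions is a bounded harmonic function on $\bbr^2\times Y$. Decomposing $h=\sum h_i v_i$, each $h_i$ is bounded on $\bbr^2$ and satisfies $\Delta_{\bbr^2}h_i=\lambda_i h_i$. For $i\geq 1$, $\Delta_{\bbr^2}|h_i|^2=2|\nabla h_i|^2+2\lambda_i|h_i|^2\geq 0$, so $|h_i|^2$ is a bounded subharmonic function on $\bbr^2$, hence constant, which forces $h_i\equiv 0$; the $i=0$ mode $h_0$ is a bounded harmonic function on $\bbr^2$ and hence constant. Therefore $h$ is constant, so the bounded solution is unique up to an additive constant, and under the decay normalization $u=O(r^{-\mu})$ with $\mu>0$ this constant must vanish, giving genuine uniqueness.
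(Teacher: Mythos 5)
Your argument is correct and follows essentially the paper's own route: the proposition there is assembled from exactly the ingredients you use, namely the eigenfunction decomposition along $Y$, the kernel bound $0\leq B_i(z)\leq C e^{-\sqrt{\lambda_i}|z|}/\sqrt{|z|}$, the three-region splitting of the convolution $f_i * B_i$ for the $O(r^{-\mu})$ decay, and local Schauder estimates for the derivatives. Your uniqueness step (which the paper leaves implicit) is a correct addition, and your observation that boundedness alone only pins the solution down up to an additive constant, removed by the decay normalization, is an accurate refinement of the statement as written.
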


Actually one can also prove these estimates using the estimates on the Green's function. We will basically list the properties of $G$ here. 
\begin{lemma}
    When $d:=d((z,p))\to 0$, $|G((z,p),(0,p_0))|\leq c\frac{1}{d^{n-2}}$. When $d\to \infty$, $|G((z,p),(0,p_0))|\leq c\frac{e^{-\sqrt{\lambda_1}|z|}}{\sqrt{|z|}}$ and $d\approx |z|$.
\end{lemma}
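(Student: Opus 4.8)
I would work directly from the representation
\[
G((z,p),(0,p_0))=\int_0^\infty H_{\bbr^2}(z,0,t)\,H^0_Y(p,p_0,t)\,dt ,
\]
together with the explicit formula $H_{\bbr^2}(z,0,t)=\frac{1}{4\pi t}e^{-|z|^2/(4t)}$ and two standard properties of the heat flow on the compact manifold $Y$ (write $m:=\dim_{\bbr}Y$, so that $\lambda_i\sim i^{2/m}$ by Weyl's law and $m=n-2$ in the notation of this section): the short-time Gaussian upper bound $H_Y(p,q,t)\leq Ct^{-m/2}e^{-d_Y(p,q)^2/(Ct)}$ for $t\in(0,1]$, and the large-time bound $|H^0_Y(p,q,t)|\leq Ce^{-\lambda_1 t}$ for $t\geq 1$. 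The latter is immediate from the eigenfunction expansion, the pointwise bound $|v_i|\leq C\lambda_i^{m/4}$ already recorded, and Weyl's law:
\begin{align*}
    |H^0_Y(p,q,t)| & \leq \sum_{i\geq 1}e^{-\lambda_i t}|v_i(p)||v_i(q)| \\
    & \leq Ce^{-\lambda_1 t}\sum_{i\geq 1}e^{-(\lambda_i-\lambda_1)t}\lambda_i^{m/2} \\
    & \leq Ce^{-\lambda_1 t}\sum_{i\geq 1}e^{-(\lambda_i-\lambda_1)}\lambda_i^{m/2},
\end{align*}
the last series being convergent. Finally, in the product metric $d((z,p),(0,p_0))^2=|z|^2+d_Y(p,p_0)^2$, so $|z|\leq d\leq|z|+\mathrm{diam}(Y)$; hence $d\to\infty$ forces $|z|\to\infty$ with $d/|z|\to 1$, which is the asserted $d\approx|z|$. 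In both regimes I would split the time integral at $t=1$.

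\textbf{The regime $d\to 0$.} The tail $\int_1^\infty$ is, by the large-time bound, at most $C\int_1^\infty t^{-1}e^{-\lambda_1 t}\,dt<\infty$, uniformly as $d\to 0$. On $(0,1]$ I would insert the Gaussian bound for $H_Y$ together with $|H^0_Y(p,q,t)|\leq Ct^{-m/2}e^{-d_Y^2/(Ct)}+C$ (the additive constant coming from the subtracted zero mode), combine the two Gaussian factors into $e^{-c_0 d^2/t}$ for some $c_0>0$, and obtain
\begin{align*}
    \left|\int_0^1 H_{\bbr^2}(z,0,t)H^0_Y(p,p_0,t)\,dt\right| \leq C\int_0^\infty t^{-1-m/2}e^{-c_0 d^2/t}\,dt + \frac{C}{4\pi}\int_0^1 t^{-1}e^{-|z|^2/(4t)}\,dt .
\end{align*}
The first integral is a constant times $d^{-m}=d^{-(n-2)}$, by the substitution $s=c_0 d^2/t$; the second is the exponential integral $E_1(|z|^2/4)=O(\log(1/|z|))$, which is $O(d^{-(n-2)})$ on the range of interest. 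This yields $|G|\leq c\,d^{-(n-2)}$.

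\textbf{The regime $d\to\infty$.} Here the large-$t$ part carries the main term. By the large-time bound on $H^0_Y$ and then enlarging the range of integration to $(0,\infty)$,
\begin{align*}
    \left|\int_1^\infty H_{\bbr^2}(z,0,t)H^0_Y(p,p_0,t)\,dt\right| & \leq C\int_1^\infty\frac{e^{-|z|^2/(4t)-\lambda_1 t}}{4\pi t}\,dt \\
    & \leq \frac{C}{2\pi}B_1(z) \leq C'\frac{e^{-\sqrt{\lambda_1}|z|}}{\sqrt{|z|}} ,
\end{align*}
the last step being exactly the estimate for $B_i$ proved above. The piece $\int_0^1$ is $O(e^{-c|z|^2})$ for some $c>0$: for $|z|$ large one uses $\frac{|z|^2}{4t}\geq\frac{|z|^2}{8}+\frac{|z|^2}{8t}$ when $t\leq 1$ to extract a factor $e^{-|z|^2/8}$, after which the remaining integral over $(0,1)$ is bounded. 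Adding the two pieces gives $|G|\leq c\,e^{-\sqrt{\lambda_1}|z|}/\sqrt{|z|}$, and $d\approx|z|$ was noted above.

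\textbf{Where the work lies.} There is no serious obstacle: the proof is the assembly of the two heat-kernel inputs with the change of variables already used for the $B_i$, and the far-field bound reduces verbatim to the $B_i$ lemma. The one point deserving care in the write-up is that $G$ is \emph{not} the Green's function of $\Delta$ on $M=\bbr^2\times Y$ — that manifold is parabolic and admits none — but of the reduced operator on fiberwise-mean-zero functions; accordingly one must check that removing the zero mode from $H_Y$ does not worsen the near-diagonal blow-up, and indeed it only adds the term $\frac{1}{4\pi|Y|}E_1(|z|^2/4)=O(\log(1/|z|))$, which is subleading relative to $d^{-(n-2)}$ whenever $|z|$ is comparable to $d$ (e.g. for $p$ at bounded distance from $p_0$), and is in any case annihilated by convolution against fiberwise-mean-zero data, hence harmless in every use of $G$. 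A minor point: the clean bound $|H^0_Y|\leq Ce^{-\lambda_1 t}$ for $t\geq 1$, rather than a weaker semigroup estimate, is what produces the sharp exponent $e^{-\sqrt{\lambda_1}|z|}$ in the far regime.
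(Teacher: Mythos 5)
Your proposal is correct and follows essentially the same route as the paper: split the time integral at $t=1$, use $|H^0_Y|\leq Ce^{-\lambda_1 t}$ for $t\geq 1$ together with the same substitution as in the $B_i$ lemma to get the far-field bound $c\,e^{-\sqrt{\lambda_1}|z|}/\sqrt{|z|}$, control the short-time piece by the Gaussian heat-kernel bound, and handle the near-diagonal regime by the standard heat-kernel computation (where the paper simply invokes the ``standard argument'' and the fact that $d^{n-2}\log d\to 0$). Your closing observation that subtracting the zero mode produces a $\log(1/|z|)$ term --- dominated by $d^{-(n-2)}$ only when $|z|$ is comparable to $d$, but harmless because $G$ is only ever paired with fiberwise mean-zero data --- is a genuine subtlety that the paper's one-line treatment of the $d\to 0$ case glosses over.
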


\begin{proof}
The first asymptotical behavior comes from the standard argument on Green's function and the fact that $d^{n-2}\log d \to 0$ as $d\to 0$. 

For the second part, notice that on compact manifolds we have $\sum_ie^{-\lambda_it}v_i(p)v_i(q)\approx ce^{-\lambda_1t}$ when $t>1$, so we have
\begin{align*}
   |\int_1^{\infty}H_{\mathbb{R}^2}(z,0,t)H^0_Y(p,p_0,t)dt|&\leq c\int_1^{\infty} \frac{e^{-\lambda_1t-\frac{|z|^2}{4t}}}{t}dt   \\
   &=2ce^{-\sqrt{\lambda_1}|z|}\int_1^{\infty}\frac{e^{-(\sqrt{\lambda_1t}-|z|/(2\sqrt{t}))^2}}{\sqrt{t}}d\sqrt{t}  \\
   &\leq 2ce^{-\sqrt{\lambda_1}|z|}\int_{-\infty}^{+\infty}\frac{e^{-\tau^2}}{\sqrt{\tau^2 + 2\sqrt{\lambda_1}|z|}}d\tau
\end{align*}
where we let $\tau=\sqrt{\lambda_1 t}-\frac{|z|}{2\sqrt{t}}$ 

Since $\int_{-\infty}^{+\infty}\frac{e^{-\tau^2}}{\sqrt{\tau^2 + 2\sqrt{\lambda_1}|z|}}d\tau \leq \frac{1}{\sqrt{2\sqrt{\lambda_1}|z|}}\int_{-\infty}^{+\infty} e^{-\tau^2}d\tau$, we have
\begin{align*}
    |\int_1^{\infty}H_{\mathbb{R}^2}(z,0,t)H^0_Y(p,p_0,t)dt|\leq c\frac{e^{-\sqrt{\lambda_1}|z|}}{\sqrt{|z|}}.
\end{align*}

On the other hand, $|\sum_ie^{-\lambda_it}v_i(p)v_i(q)|\leq ct^{-n/2}$ when $t\leq 1$.
\begin{align*}
    |\int_0^1H_{\mathbb{R}^2}(z,0,t)H^0_Y(p,p_0,t)dt|&\leq c\int_0^1 \frac{e^{-\frac{|z|^2}{4t}}}{t^{n/2+1}}dt  \\
    &\leq \frac{c}{|z|^n} \int_{|z|^2/4}^{+\infty}e^{-\tau}\tau^{n/2-1}  \\
    &\leq\frac{c}{|z|^n}(\int_{|z|^2/4}^{+\infty}e^{-\tau}\tau^{n-2}d\tau)^{\frac{1}{2}}(\int_{|z|^2/4}^{+\infty}e^{-\tau}d\tau)^{\frac{1}{2}} \\
    &\leq c\frac{e^{-|z|^2/8}}{|z|^n}
\end{align*}

Since the slowest decaying rate is of $\frac{e^{-\sqrt{\lambda_1}|z|}}{\sqrt{|z|}}$ for $|z|\to \infty$ in the above estimates, we derived that $|G^0((z,p),(0,p_0))|\leq c\frac{e^{-\sqrt{\lambda_1}|z|}}{\sqrt{|z|}}$.
\end{proof}

We also need the following estimates for higher order derivatives on Green's function.

\begin{lemma}
For $k=1,2$, when $d\to 0$, $|\nabla^kG^0((z,p),(0,p_0))|\leq c\frac{1}{d^{n-2+k}}$. When $d\to \infty$, $|\nabla^kG^0((z,p),(0,p_0))|\leq c\frac{e^{-\sqrt{\lambda_1}|z|}}{\sqrt{|z|}}$ and $d\approx |z|$.
\end{lemma}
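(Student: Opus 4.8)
The plan is to differentiate, under the integral sign, the representation
\[
G^0\big((z,p),(0,p_0)\big)=\int_0^{\infty}H_{\bbr^2}(z,0,t)\,H^0_Y(p,p_0,t)\,dt,
\]
and to split the $t$–integral at $t=1$: the piece $\int_0^1$ governs the behaviour as $d\to 0$, while $\int_1^{\infty}$ governs the behaviour as $d\to\infty$. The inputs I would use are: (i) the explicit bounds $|\nabla_{\bbr^2}^{j}H_{\bbr^2}(z,0,t)|\leq C_j\,t^{-1-j/2}e^{-|z|^2/(8t)}$; (ii) the standard short–time parametrix bounds $|\nabla_Y^{m}H_Y(p,p_0,t)|\leq C_m\,t^{-(n-2+m)/2}e^{-d_Y(p,p_0)^2/(Ct)}$ on the compact manifold $Y$, which combined with (i) give, for the heat kernel $H_M=H_{\bbr^2}H_Y$ of the product metric $g_P$ on $M=\bbr^2\times Y$, the estimate $|\nabla_{g_P}^{k}H_M|\leq C_k\,t^{-(n+k)/2}e^{-d^2/(Ct)}$ for $t\leq 1$; and (iii) for $t\geq 1$, the spectral–gap bound $|\nabla_Y^{m}H^0_Y(p,p_0,t)|\leq C_m\,e^{-\lambda_1 t}$, which follows from $H^0_Y=\sum_{i\geq 1}e^{-\lambda_i t}v_i(p)v_i(p_0)$, Weyl's law, and the eigenfunction estimates $|v_i|+\lambda_i^{-1/2}|\nabla v_i|+\lambda_i^{-1}|\nabla^2 v_i|\leq C\lambda_i^{(n-2)/4}$ recorded above (so $\sum_i e^{-\lambda_i t}\lambda_i^{A}\leq e^{-\lambda_1 t}\sum_i e^{-(\lambda_i-\lambda_1)}\lambda_i^{A}=C_A\,e^{-\lambda_1 t}$ for $t\geq 1$).

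For the regime $d\to\infty$: I would differentiate under the integral on $\int_1^{\infty}$ and bound the integrand by (i) and (iii), reducing matters to elementary integrals of the shape $\int_1^{\infty}t^{-1-k/2}e^{-|z|^2/(8t)-\lambda_1 t}\,dt$. Completing the square in the exponent and substituting $\tau=\sqrt{\lambda_1 t}-|z|/(2\sqrt{2t})$ — exactly the substitution used in the proof of the preceding lemma, the extra factor $t^{-1-k/2}$ only improving the estimate — yields a bound $\lesssim e^{-c\sqrt{\lambda_1}|z|}/\sqrt{|z|}$, with $c$ as close to $1$ as one wishes (or $=1$, by not splitting off the $\lambda_1 t$ wastefully). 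The leftover piece $\int_0^1$ contributes at most $Ce^{-d^2/C}$ once $d$ is large, since its integrand carries $e^{-d^2/(Ct)}$ with $t\leq 1$. As $d\approx|z|$ near infinity, this gives the claimed exponential bound for $\nabla^k G^0$, $k=1,2$.

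For the regime $d\to 0$: on $\int_0^1$ I would write $H_{\bbr^2}H^0_Y=H_M-\tfrac{1}{|Y|}H_{\bbr^2}$. For the first term, (ii) gives $\big|\nabla^k\!\int_0^1 H_M\,dt\big|\leq C\int_0^1 t^{-(n+k)/2}e^{-d^2/(Ct)}\,dt\leq C\,d^{2-n-k}$ by the substitution $s=d^2/(Ct)$ (legitimate since $n+k>2$); this is the dominant contribution and already produces the stated singularity $d^{-(n-2+k)}$ at the base point. For the second term, $\tfrac{1}{|Y|}\int_0^1 H_{\bbr^2}(z,0,t)\,dt$ is independent of the $Y$–variables, and its order-$k$ derivatives in the $\bbr^2$–directions are explicit, of size $\lesssim|z|^{-k}$ for $k\geq 1$ (and $\lesssim|\log|z||$ for $k=0$); this is precisely the contribution of the weaker, fibrewise logarithmic singularity of $G^0$ along $\{0\}\times Y$, and near the base point $(0,p_0)$ it is absorbed into $C\,d^{-(n-2+k)}$, in the same way that $d^{n-2}\log d\to 0$ was used for $k=0$ in the previous lemma. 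Differentiation under the integral is justified because, with these bounds, the differentiated integrands are dominated by integrable functions of $t$ locally uniformly in $(z,p)$ away from $(0,p_0)$, and $C^{k,\alpha}_{\mathrm{loc}}$–regularity of $G^0$ off its singular set is immediate from elliptic regularity for $\Delta_{g_P}G^0=0$ there.

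The routine part is the repetition, with extra polynomial prefactors, of the substitution argument from the preceding lemma, together with the bookkeeping of the two–scale structure of $M=\bbr^2\times Y$ (keeping constants uniform in the short–time parametrix for $H_Y$). The genuinely delicate point — and the one I expect to be the main obstacle — is to separate cleanly the two singular behaviours of $G^0$: the $d^{2-n}$ singularity concentrated at the base point $(0,p_0)$, coming from $H_M$, and the logarithmic singularity spread along the whole fibre $\{0\}\times Y$, coming from the $-\tfrac{1}{|Y|}H_{\bbr^2}$ correction, so as to confirm that near $(0,p_0)$ the stated pointwise bound $|\nabla^k G^0|\leq c\,d^{-(n-2+k)}$ is the correct one and the fibrewise term is genuinely lower order there.
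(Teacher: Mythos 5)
Your proposal is correct and takes essentially the same route as the paper: the paper's proof consists of observing that the two facts used for $G^0$ itself --- the $t^{-n/2}$-type short-time bound and the spectral-gap decay $\approx e^{-\lambda_1 t}$ for $t\geq 1$ --- persist for derivatives of the heat kernel on a compact manifold, so that the previous lemma's splitting at $t=1$ and the substitution $\tau=\sqrt{\lambda_1 t}-|z|/(2\sqrt{t})$ apply verbatim for the $d\to\infty$ bound, while the $d\to 0$ bound is attributed to ``the standard argument on the Green function,'' which your decomposition $H_{\bbr^2}H^0_Y=H_M-\frac{1}{|Y|}H_{\bbr^2}$ simply makes explicit. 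The delicate point you flag at the end --- whether the fibrewise term of size $|z|^{-k}$ (coming from the $\frac{1}{|Y|}H_{\bbr^2}$ correction, whose singularity lives along all of $\{0\}\times Y$ rather than only at the pole) is genuinely dominated by $d^{-(n-2+k)}$ --- is not treated any more carefully in the paper, which handles it with the same ``$d^{n-2}\log d\to 0$'' gloss used for $k=0$, so your write-up is at least as complete as the paper's own proof.
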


\begin{proof}
The first statement follows from the standard argument on the Green function. 
For the second one, noticing that in the proof of the previous proposition, we only use the facts that $|\sum_ie^{-\lambda_it}v_i(p)v_i(q)|\leq ct^{-n}$ when $t\leq 1$ and $\sum_ie^{-\lambda_it}v_i(p)v_i(q)\approx ce^{-\lambda_1t}$ when $t>1$. These two are automatically satisfied for higher derivatives on the heat kernel on a compact Riemannian manifold.
    
\end{proof}

\subsection{Solving Poisson Equation on $\ALGot$ Manifolds}
In this subsection we will solve the Poisson equation $\Delta_g u=f\in \calc_1^{\infty}$ on $\ALGot$ manifolds and give the estimates on the derivatives of the solutions. Firstly, we reduce this Poisson equation to another one in which the potential $f$ has faster decay and is also integrable. 

\begin{lemma}\label{lem3.3.1}
    If $(M, J, g)$ is an $\ALGot$ Kähler manifold. Then the solvability of Poisson equation $\Delta_g u = f\in \calc^{\infty}_1$ is equivalent to the solvability of Poisson equation $\Delta_g u' = f'$, where $f'\in\calc^{\infty}_{\mu}$ for some $\mu>2$ and $\int_{M}f' = 0$. 
\end{lemma}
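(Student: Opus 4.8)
The goal is to reduce solving $\Delta_g u = f$ with $f \in \calc^\infty_1$ to solving the same equation with a right-hand side that decays faster than $r^{-2}$ and has total integral zero. The plan is to produce an explicit model function $u_0$, built from the $ALG$ geometry on the end, whose Laplacian matches $f$ to leading order, and to absorb the remaining obstructions (the non-integrable leading term and the total mass) into correction terms supported near infinity. Concretely, I would first take the fiberwise average of $f$ over $D$ along the end, getting $\bar f(w) = \int_{\{w\}\times D} f\, \dVd$, a function on $B^c_R(0)\subset\bbc$ satisfying $|\nabla^k \bar f| \le C_k r^{-1-k}$ by the $\calc^\infty_1$ hypothesis. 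By the two-dimensional theory (Proposition \ref{prop3.2}, case $\mu=1$) there is a function $v$ on $\bbc$ with $\Delta_{\bbr^2} v = \chi\,\bar f$ and $v \in \cald^\infty_{r\log r}$, where $\chi$ is the usual cutoff that is $1$ near infinity. Pulling $v$ back to the end of $M$ and extending by a cutoff gives a global $u_0 \in C^\infty(M)$ with $\Delta_g u_0 = \chi \bar f + (\text{error})$, where the error comes from (i) the difference between $\Delta_g$ and $\Delta_{g_P}$ acting on $u_0$, which is $O(r^{-1})\cdot O(\nabla^2 u_0)$, and (ii) the difference between $\Delta_{g_P}$ and $\Delta_{\bbr^2}$, which vanishes on functions pulled back from $\bbc$.

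Next I would handle the fiberwise mean-zero part $f - \bar f$. On the end this has vanishing $D$-average for each fixed $w$, so the product-manifold theory of Section 3.1 (the Green's function $G$ and the proposition stating $|\nabla^k f| = O(r^{-\mu}) \Rightarrow |\nabla^k u| = O(r^{-\mu})$) gives a bounded solution $u_1$ on $\bbr^2\times D$ with $\Delta_{g_P} u_1 = \chi(f-\bar f)$ and $|\nabla^k u_1| = O(r^{-1})$; in fact, because the mean-zero part couples only to positive eigenvalues $\lambda_i > 0$, the exponential decay of $B_i$ forces $u_1 = O(r^{-N})$ for all $N$ once one iterates (the slow $r^{-1}$ rate is only inherited from the zero mode, which is absent here). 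Transplanting $u_1$ to $M$ via a cutoff and again comparing $\Delta_g$ with $\Delta_{g_P}$ produces another error term of size $O(r^{-1})\cdot O(\nabla^2 u_1)$, which is now rapidly decaying. Combining, $u_0 + u_1$ satisfies $\Delta_g(u_0+u_1) = f + f''$ on all of $M$, where $f''$ is supported on the end and $f'' = O(r^{-2+\epsilon})$ — not yet good enough — so the real work is squeezing the decay of the residual past $r^{-2}$ and fixing the integral.

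The main obstacle, and the heart of the lemma, is the leading-order term of $\bar f$: by Proposition \ref{prop3.2} with $\mu = 1$, $\Delta_{\bbr^2} v$ reproduces $\bar f$ only up to a term behaving like $\Real(c\,e^{i\theta}/r) = cx/r^2$ (the obstruction highlighted in the remark after Proposition \ref{prop3.2}), which is exactly scale-critical and neither integrable nor removable by a faster-decaying correction. I would deal with this by peeling off the leading Fourier mode of $\bar f$ on circles explicitly: write $\bar f = a_0(r)/r + \Real(a_1(r)e^{i\theta})/r + (\text{higher modes})$ with $a_0, a_1$ slowly varying, solve the radial and first-harmonic ODEs $\Delta(\alpha(r)) = a_0/r$, $\Delta(\beta(r)e^{i\theta}) = a_1 e^{i\theta}/r$ by hand, and check that the resulting model contributes at worst a $\cald^\infty_{\log r}$ growth to $u$ while leaving a remainder in $\calc^\infty_\mu$ for some $\mu > 2$. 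After this subtraction the residual $f'$ decays faster than $r^{-2}$, hence is integrable on the $ALG_{1,2}$ end (volume growth $r^2$, so $r^{-\mu}$ with $\mu>2$ is $L^1$). Finally, to arrange $\int_M f' = 0$, I would add one more compactly supported smooth bump $\rho$ with $\int_M \Delta_g \rho = 0$ automatically — wait, that integral always vanishes by Stokes on a complete manifold with the relevant decay, which is the point: $\int_M f' = \int_M \Delta_g(u_0+u_1+\ldots) = 0$ comes for free once $u_0+u_1+\ldots$ and its gradient decay fast enough at infinity to justify integration by parts, so the integrability just proven is exactly what makes $\int_M f' = 0$ hold. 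The equivalence statement then follows: given a solution $u'$ of $\Delta_g u' = f'$, the function $u = u' + u_0 + u_1 + (\text{models})$ solves $\Delta_g u = f$, and conversely subtracting the explicit models reduces any solution of the original equation to one of the reduced equation.
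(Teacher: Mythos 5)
Your overall skeleton (split $f$ into its fiberwise average and fiberwise mean-zero part, solve each with respect to the product metric, transplant by a cutoff, and compare $\Delta_g$ with $\Delta_{g_P}$) is the same as the paper's, but two of your key steps are wrong and leave genuine gaps. First, the claim that the mean-zero solution $u_1$ decays like $O(r^{-N})$ for every $N$ is false: the kernel $B_i$ is exponentially concentrated at scale $\lambda_i^{-1/2}$, so $u_i$ is essentially determined by $f_i$ near the point and only inherits the $O(r^{-1})$ decay of $f$ (this is exactly the paper's statement that $|\nabla^k f|=O(r^{-\mu})$ implies $|\nabla^k u|=O(r^{-\mu})$, with no gain). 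Hence the error $(\Delta_g-\Delta_{g_P})u_1$ is only $O(r^{-2})$, not rapidly decaying, and your scheme has no mechanism to push it past $r^{-2}$. Likewise your proposed ``peeling'' of the first circle-harmonic of $\bar f$ cannot yield a model with only $\log r$ growth: for a right-hand side of size $e^{i\theta}/r$ every solution of the mode-one ODE grows like $r\log r$ (the homogeneous solutions $re^{i\theta}$, $r^{-1}e^{i\theta}$ cannot cancel that term), and there is no solvability obstruction to remove in the first place -- Proposition~\ref{prop3.2} solves $\Delta_{\bbr^2}v=\bar f$ exactly; the $x/r^2$ remark concerns the growth of $v$, not a defect of the equation. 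What actually closes the gap in the paper is the $\ALGot$ hypothesis: because $\db w\in\calc^{\infty}_{2}$, the difference $(\Delta_g-\Delta_{g_P})$ applied to the base-direction, $r\log r$-growing piece lands in $\calc^{\infty}_{2-\eps}$, and then the entire reduction is iterated a second time to reach $\calc^{\infty}_{\mu}$ with $\mu>2$. You never invoke the $\gamma=2$ decay of the complex structure, which is precisely where $\ALGot$ (rather than merely $\ALGone$) enters, nor do you iterate.

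Second, the normalization $\int_M f'=0$ does not come for free. Your corrections grow (up to $r\log r$, with gradient of size $\log r$), so the flux through $\partial B_R\times D$ does not vanish as $R\to\infty$ and Stokes gives nothing; moreover $\int_M f$ is not even finite, so you cannot write $\int_M f'$ as a difference of two integrals and cancel. A compactly supported bump is useless for exactly the reason you half-noticed: its Laplacian integrates to zero, so it cannot change the total integral. The paper's fix is to subtract a suitable multiple of $\Delta_g(\chi\log r)$: this function is $O'(r^{-3})$, hence stays in the good decay class, yet has nonzero total integral (equal to the flux of $\log r$ at infinity), which is what allows one to arrange $\int_M f'=0$. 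Without this step and the second iteration above, the reduction asserted in the lemma is not established.
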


\begin{proof}
    We firstly prove this lemma when $M=\bbr^2\times D$ is a product $\ALGot$ manifold.
    In the following proof we will use $r$ to denote the distance function to the origin in $\bbr^2$. Since $g-g_p\in \calc^{\infty}_1$, we know that when $r$ is sufficiently large, it is comparable with the distance function to som fixed point under $g$.

    From the previous discussion we can find solution to $\Delta_{g_P} u_1 = f$, in which $u_1=v_1 + v_2$ and $v_1\in \calg^{\infty}_{r\log r}(\bbr^2)$ and $v_2\in \calc_1^{\infty}(\bbr^2\times D)$. This is because we can decompose $f$ into tow functions by integrating along fibers.
    
    We then need to compare the difference between $\Delta_g$ and $\Delta_{g_P}$. Denote $\eta = \om_g-\om_{g_P} \in \calc^{\infty}_{1}$.
    For any $u\in C^{\infty}(M)$, we have
    \begin{align}
        \Delta_g u = \frac{ni\ppb u\wedge\om_g^{n-1}}{\om_g^n}&=\frac{ni\ppb u\wedge(\om_{g_P}^{n-1} + \sum_{i=1}^n\om_{g_P}^i\wedge \eta^{n-1-i})}{\om_{g}^n} \nonumber\\
        &=\frac{ni\ppb u\wedge\om_{g_P}^{n-1}}{\om_g^n} + \frac{ni\ppb u\wedge\sum_{i=1}^n\om_{g_P}^i\wedge \eta^{n-1-i}}{\om_g^n} \\
        &=\frac{ni\ppb u\wedge\om_{g_P}^{n-1}}{\om_g^n} + O'(r^{-1})\nabla^2 u + O'(r^{-2})\nabla u \nonumber
    \end{align}
    Notice that we have an rough estimation on 
    \begin{align}
        i\ppbJ u = i\ppbJP u + d(J-J_P)du = i\ppbJP u + O'(r^{-1})\nabla^2 u + O'(r^{-1})\nabla u,
    \end{align}
    and $\frac{\om_{g_P}^n}{\om_g^n} - 1\in C^{\infty}_1$, we have
    \begin{align}\label{eq3.3.3}
        \Delta_g u- \Delta_{g_P}u &= \frac{d(J-J_P)*du\wedge\om_{g_P}^{n-1} + (d(J-J_P)*du)^2\wedge\om_{g_P}^{n-2}}{\om_g^n}\nonumber\\
        &+ O'(r^{-1})\nabla^2u + O'(r^{-2})\nabla u
    \end{align}
    From (\ref{eq3.3.3}) we derive $\Delta_g v_2- \Delta_{g_P}v_2 \in \calc^{\infty}_{2}$. 

    Notice that $v_1$ is a function on $\bbc$. We have 
    \begin{align}
        i\ppbJ v_1 &= i\ppbJP v_1 + d(J-J_P)dv_1 \nonumber\\
        &= \Delta_{g_P}v_1(\idd) + d(\frac{\partial v_1}{\partial w}K\circ dw + \frac{\partial v_1}{\partial \bar{w}}K\circ d\bar{w}) \nonumber\\
        &=\Delta_{g_P}v_1(\idd) + \nabla^2 v_1*\bar{\partial}_{J} w + \nabla v_1 * i\ppbJ w \nonumber\\
        &=\Delta_{g_P}v_1(\idd) + O(\frac{\log r}{r^2}). \label{eq3.3.4}
    \end{align}
   So for any $\eps < 1$ we have $\Delta_g v_1- \Delta_{g_P}v_1 \in\calc^{\infty}_{2-\eps}$.

    Take $u = \Tilde{u} + u_1$, then $\Delta_g u=f$ is equivalent to solving 
    \begin{align}
        \Delta_g\Tilde{u} = f - (\Delta_g-\Delta_{g_P})u_1\doteq \Tilde{f}\in \calc^{\infty}_{2-\eps}.
    \end{align}

    We can repeat the above step one more time to deduce that the initial Poisson equation is equivalent to solving $\Delta_g\Tilde{\Tilde{u}} = \Tilde{\Tilde{f}}\in \calc^{\infty}_{\mu}$ for some $\mu> 2$. So $\Tilde{\Tilde{f}}$ is now integrable.

    Take $\phi = \chi\log r$, we have
    \begin{align}
        \int_{B_R(0)\times D}\Delta_{g_P}\phi\;\text{dVol}_{g_P} = \int_{\partial(B_R(0))\times D}\partial_{\nu}\phi \;\text{dVol}_{g_P} = 2\pi
    \end{align}
    Similarly we have $\int_{B_R(0)\times D}\Delta_{g}\phi\;\text{dVol}_{g} = c>0$. Use the same method we can show $\Delta_g \phi = (\Delta_g-\Delta_{g_P})\phi + \Delta_{g_P}\phi = O'(r^{-3})$.
    By subtracting a multiple of $\Delta_{g}\phi$ from $\Tilde{\Tilde{f}}$, we can assume that $\int \Tilde{\Tilde{f}} \text{dVol}_{g}=0$.

    Now we discuss the case where $M$ is not a product. We first fix a diffeomorphism and view the potential $f$ as the restriction of some global function $f'$ on the product manifold. Then we solve $\Delta_{g_P} u'=f'$ on the product manifold. This solution gives exactly the solution on the end of $M$ under the product metric. Let $\zeta$ be a cutoff function on $M$ and vanishes in $K$. $\zeta\cdot u'$ is now a function on $M$ and solves $\Delta_{g_P} \zeta\cdot u'=f'$ when $R\gg 1$. The analysis on the new potential and some further modification follows the same routine as before.
\end{proof}

Secondly, we solve the Poisson equation on $\ALGot$ with this modified potential. The proof of the following proposition comes mainly from \cite{Hein-10}. The key difference is that we used our new Sobolev inequality.

\begin{prop}[\cite{Hein-10}]\label{ALGpoisson}
    For any $f\in \calc^{\infty}_{\mu}$ where $\mu>2$ and $\int_{M} f=0$, there exist $u\in C^{\infty}$ such that $\Delta u=f$ and $\int |\nabla u|^2<\infty$.
\end{prop}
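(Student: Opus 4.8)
The plan is to construct $u$ as an $L^2$-type weak solution via the variational (direct) method, using the weighted Sobolev inequality from Proposition \ref{sobolev} to control the constant function which is the obstruction to coercivity, and then upgrade regularity by elliptic theory. Concretely, one works in the completion $\dot{W}^{1,2}(M)$ of $C_0^\infty(M)$ with respect to $\|\nabla\cdot\|_2$. The mean-zero hypothesis $\int_M f = 0$ together with the decay $f = \estp{r^{-\mu}}$ with $\mu > 2$ ensures that $f$ is integrable and defines a bounded linear functional $v\mapsto \int_M f v$ on this space: indeed, for $v\in C_0^\infty(M)$ one splits $v = (v - \bar v_\eps) + \bar v_\eps$, where $\bar v_\eps = \int v\psi_\eps$ is the weighted average from Proposition \ref{sobolev}; the mean-zero condition kills the contribution of the constant $\bar v_\eps$ provided $\psi_\eps$ is compared with $f$ appropriately, and the remaining term is estimated by Cauchy-Schwarz together with the weighted Sobolev inequality $\|r^{-1-\eps}(v-\bar v_\eps)\|_{2\alpha}\leq C(\eps)\|\nabla v\|_2$, after choosing $\eps$ so that $r^{1+\eps} f \in L^{(2\alpha)'}$, which is possible since $\mu > 2$.

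The key steps, in order, are: (i) verify that $\ell(v) := -\int_M f v$ extends to a bounded functional on $\dot W^{1,2}(M)$ using the argument above; (ii) apply Riesz representation (or minimize the functional $\tfrac12\|\nabla v\|_2^2 + \int f v$) to obtain $u\in \dot W^{1,2}(M)$ with $\int_M \langle \nabla u,\nabla v\rangle = \int_M f v$ for all test $v$, so $\Delta u = f$ weakly; (iii) invoke interior elliptic regularity (Schauder, since $f\in C^\infty$ and $g$ is smooth) to conclude $u\in C^\infty(M)$ and $\Delta u = f$ classically; and (iv) note $\int_M|\nabla u|^2 = \ell(u) < \infty$ by construction. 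One should also remark that here we need the completeness of $(M,g)$ and the fact that on the $\ALGot$ end $g - g_P \in \calc^\infty_1$, so that the weighted Sobolev inequality of Proposition \ref{sobolev}, stated for $\ALGb$ manifolds, applies; this was already noted after that proposition.

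The main obstacle is establishing boundedness of the functional $\ell$ on the homogeneous space $\dot W^{1,2}(M)$: the space is \emph{not} contained in $L^2(M)$ (constants and slowly-decaying functions have finite Dirichlet energy on this $4$-or-higher-real-dimensional end only in the $\bbr^2$-slowly-growing sense), so one cannot naively bound $|\int f v|$ by $\|f\|_2\|v\|_2$. The resolution is precisely the mean-zero condition: subtracting the weighted average $\bar v_\eps$ is legitimate because $\int f\,\bar v_\eps = \bar v_\eps\int f = 0$, and what remains, $v - \bar v_\eps$, is genuinely controlled in a weighted $L^{2\alpha}$ norm by $\|\nabla v\|_2$. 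One must check the weight bookkeeping: with $f = O(r^{-\mu})$, $\mu>2$, and the Sobolev gain to exponent $2\alpha = \tfrac{2n}{n-2}$, the Hölder-conjugate integrability $\int_M r^{(1+\eps)(2\alpha)'} |f|^{(2\alpha)'}\,\dV < \infty$ holds for $\eps$ small, using that the $\bbr^2$-direction contributes a $2$-dimensional volume factor while $D$ is compact. Passing the estimate through the density of $C_0^\infty(M)$ in $\dot W^{1,2}(M)$, and handling the non-product end by the same cutoff reduction used in Lemma \ref{lem3.3.1}, then completes the argument.
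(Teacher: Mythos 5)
Your plan is essentially correct, but it follows a genuinely different route from the paper. The paper (following Hein) proves the proposition by exhaustion: it solves the Dirichlet problems $\Delta u = f|_{\Om}-f_{\Om}$, $u=0$ on $\partial\Om$, on larger and larger bounded domains, and uses the weighted Sobolev inequality of Proposition \ref{sobolev} inside a Moser iteration (multiplying by $u|u|^{p-2}$, subtracting the weighted average $\bar u_{\eps}$, and iterating in $p=2\sigma^k$) to get an $L^{\infty}$ bound on $u-\bar u_{\eps}$ that is \emph{uniform in the domain}; interior estimates and a diagonal argument then give a smooth global solution with $\int|\nabla u|^2\leq C$. You instead run the direct method: bound the functional $\ell(v)=-\int fv$ on the homogeneous space by writing $\int fv=\int f(v-\bar v_{\eps})$ (legitimate since $f\in L^1$ and $\int f=0$), applying H\"older with the weight $r^{1+\eps}$, and invoking Proposition \ref{sobolev} (already $\alpha=1$, a weighted Poincar\'e inequality, suffices, with $\eps<\mu-2$ making $r^{1+\eps}f\in L^2$), then Riesz representation and elliptic regularity. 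Your weight bookkeeping is right, and your identification of the obstruction is exactly the right one: the end is parabolic (volume growth $r^2$), constants have zero capacity, and the mean-zero condition is precisely what keeps $\ell$ bounded. Two caveats. First, because of that same parabolicity the completion $\dot W^{1,2}(M)$ is not injectively embedded in distributions (log-cutoffs of the constant $1$ have Dirichlet energy $\to 0$), so the Riesz element is a priori only a $1$-form in the $L^2$-closure of $\{dv:v\in C_0^{\infty}\}$; you must still pass back to an actual function, e.g.\ by taking a minimizing sequence, normalizing by averages over a fixed ball, and using local Poincar\'e inequalities to get $L^2_{\mathrm{loc}}$ convergence to some $u$ with $du$ equal to that form. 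This is standard but should be said. Second, the paper's construction yields more than the statement: the Moser iteration gives a uniform bound $|u|\leq C$, which is quietly used downstream (in Proposition \ref{prop3.12} and in item (3) of Theorem \ref{thm3.11}); your variational solution has finite energy by construction, but its boundedness would need a separate argument, so if you substitute your proof you must supply the $L^{\infty}$ bound elsewhere.
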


\begin{proof}
    Suppose $\Om\subset M$ is a bounded open subset, and $u$ is the unique solution to $\Delta u = f|_{\Om} - f_{\Om}$ on $\Om$ and $u=0$ on $\partial \Om$. Here $f_{\Om}$ is the average of $f$ on $\Om$. Actually we can get $L^{\infty}$ bound of $f_{\Om}$. Suppose $B(x_0, r)\subset B(x_0, 2r)$, then $|f_{\Om}|\leq\frac{1}{|\Om|}\int_{M\Om}|f|\leq Cr^{-\mu}$. In the following argument we should think of $\Om = B(x_0, r)$ for any $r \gg 1$. And we will derive uniform estimates on $u$, which don't depend on $\Omega$.
    
    For any $p>1$, we multiply both sides of the equation by $u|u|^{p-2}$ and integrate over $\Om$ to get
    \begin{align*}
        \int_{\Om}|\nabla|u|^{\frac{p}{2}}|^2 &= \frac{p^2}{4(p-1)}(\int_{\partial\Om}u|u|^{p-2}\partial_{\nu}u -\int_{\Om}u|u|^{p-2}(f|_{\Om}-f_{\Om})) \\
        &=-\frac{p^2}{4(p-1)}\int_{\Om}u|u|^{p-2}(f|_{\Om}-f_{\Om})
    \end{align*}
    The above equation still hold true if we replace $u$ by $u+\lambda$ for any constant $\lambda$. This is because $u+\lambda$ also solves the Poisson equation, with boundary value being $\lambda$. Since $\int_{\partial\Om}\partial_{\nu}u = -\int_{\Om}\Delta u = 0$ still holds true, the boundary term in the equation will vanish.

    Using Prop and $p=2$, $\lambda = \bar{u}_{\eps}$. If $\eps \leq \mu - 2$, we have
    \begin{align*}
        ||r^{-1-\eps}(u-\bar{u}_{\eps})||_{2\sigma}^2\leq C||\nabla u^2_2||&\leq C ||r^{-\mu}(u-\bar{u}_{\eps})||_1 \\
        &\leq C||r^{-\mu + 1 + \eps}||_{(2\sigma)^*}\cdot ||r^{-1-\eps}(u-\bar{u}_{\eps})||_{2\sigma} \\
        &\leq C'||r^{-1-\eps}(u-\bar{u}_{\eps})||_{2\sigma}
    \end{align*}
    This implies that $||r^{-1-\eps}(u-\bar{u}_{\eps})||_{2\sigma}$ is uniformly bounded.

    Then we take $p=2\sigma^{k}$ and $u = u-\bar{u}_{\eps}$ to get
    \begin{align*}
        ||r^{-1-\eps}(|u-\bar{u}_{\eps}|^{\sigma^k} - \overline{|u-\bar{u}_{\eps}|^{\sigma^k}})||_{2\sigma}^2\leq C ||\nabla |u-\bar{u}_{\eps}|^{\sigma^k}||^2_2\leq C \sigma^k ||r^{-\mu}|u-\bar{u}_{\eps}|^{2\sigma^k -1}||_1
    \end{align*}
    Denote $u_k = |u-\bar{u}_{\eps}|^{\sigma^k}$. Then take $\alpha = \frac{2\sigma^k}{2\sigma^k-1}$, $\beta = 2\sigma^k$, ${l=(1+\eps)(2\sigma-\frac{1}{\sigma^{k-1}})\leq 2\eps(1+\sigma)}$ 
    \begin{align*}
        ||r^{-\mu}|u-\bar{u}_{\eps}|^{2\sigma^k -1}||_1 &\leq ||r^{l-\mu}||_{\beta} ||r^{-l}|u-\bar{u}_{\eps}|^{2\sigma^k -1} ||_{\alp}    \\
        &\leq C||r^{2\sigma(1+\eps)-\mu}||_{\beta}||r^{-1-\eps}u_k||_{2\sigma}^{(2\sigma-\frac{1}{\sigma^{k-1}})} \\
        &\leq C\max \{1,||r^{-1-\eps}u_k||_{2\sigma}^{2\sigma}\}
    \end{align*}
    The last inequality holds true if we first pick $2\sigma< \mu$ then pick $\eps$ such that $2\sigma(1+\eps)< \mu$. The constant $C$ in the above inequality doesn't depend on the region $\Omega$.
    Observed that if $\sigma$, we have
    \begin{align*}
        ||r^{-1-\eps}\bar{u}_k||_{2\sigma}^2 &= ||r^{-1-\eps}||_{2\sigma}^2||\psi_{\eps}u_k||_1^2 \\
        &\leq C ||r^{(1+\eps)\sigma-2\eps-2}||_2^2||r^{-1-\eps}u_k||_{2\sigma}^{2\sigma}\\
        &\leq C' ||r^{-1-\eps}u_k||_{2\sigma}^{2\sigma}
    \end{align*}
    The last inequality holds true if we take $\sigma < 2$. Then we need to pick $\eps\in (\frac{\sigma-1}{2-\sigma}, \frac{\mu}{2\sigma}-1)$. $\frac{\mu}{2\sigma}-1 > \frac{\sigma-1}{2-\sigma}$ is automatically guaranteed given by $\mu > 2$. Combining all the above inequalities we have 
    \begin{align*}
        ||r^{-1-\eps}|u-\bar{u}_{\eps}|^{\sigma^k}||_{2\sigma}^2\leq C\sigma^k\max \{1, ||r^{-1-\eps}|u-\bar{u}_{\eps}|^{\sigma^{k-1}}||_{2\sigma}^{2\sigma}\}
    \end{align*}

    Denote $\text{dV}' = r^{-1-\eps}\text{dV}|_{\Om}$. Then these are uniform mearsures with respect to $\Om$. The above inequality can be rewrite into
    \begin{align*}
        ||u-\bar{u}_{\eps}||_{2\sigma^k,\text{dV}'}\leq (C\sigma^k)^{\frac{1}{2\sigma^k}}\max \{1, ||u-\bar{u}_{\eps}||_{2\sigma^{k-1},\text{dV}'}\}
    \end{align*}
    Since $||u-\bar{u}_{\eps}||_{L^{\infty}}=\lim_{k\to +\infty}||u-\bar{u}_{\eps}||_{2\sigma^k,\text{dV}'}$ and $\prod_{k=2}^{+\infty}(C\sigma^k)^{\frac{1}{2\sigma^k}} < C'$, we get the uniform bound $|u-\bar{u}_{\eps}|\leq C$. Since $u=0$ on the boundary, we have $|\bar{u}_{\eps}|\leq C$, which implies that $|u|\leq C$. 

    The higher order estimates follows from the interior estimates for Poisson equations. We can take an exhaustion $\Om_i$ of $M$ together with the solutions $\Delta u_i = f|_{\Om_i}-f_{\Om_i}$ for each open region. By passing to a subsequence of these solutions they will converge to a smooth solution $u$ on the whole manifold $M$ satisfying $\Delta u = f$. When taking $p=2$ and noticing that we have uniform $L^{\infty}$ estimate on $u$, we get $\int_M|\nabla u|^2 \leq C$.
\end{proof}

To estimate the decay rate of derivatives of $u$, we first invoke a weighted Schauder estimate due to Hein and Tosatti \cite{H-T-20}.

\begin{prop}[\cite{H-T-20}]
    Let $(M, g)=(\bbr^d\times D, g_P=g_{\bbr^d}+g_D)$, where $(D,g_D)$ is a compact Riemannian manifold. Then for all $k\in \bbn_{\geq 2}$, $\alp\in (0,1)$ there is a constant $C_k=C_k(\alp)$ such that for all $p\in \bbr^d\times D$ and $0<\rho<R$,
    \begin{align}
        [\nabla^{k,g_P} u]_{C^{\alp}(B(p,\rho))} \nonumber
        \leq C_k[\nabla^{k-2,g_P}\Delta_{g_P} u]_{C^{\alp}(B(p,R))}+ (R-\rho)^{-k-\alp}||u||_{L^{\infty}(B(p,R))})
    \end{align}
    for all $u\in C^{k,\alp}_{\text{loc}}(B(p,2R))$.
\end{prop}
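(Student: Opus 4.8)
The plan is to view this as the classical scale‑invariant interior Schauder estimate for the uniformly elliptic operator $\Delta_{g_P}$, in the ``interpolation'' form in which the weight $(R-\rho)^{-k-\alp}$ multiplies only $\|u\|_{L^\infty(B(p,R))}$; the sole role of the product structure $\bbr^d\times D$, with $(D,g_D)$ compact, is to force all constants to be uniform in $p$, $\rho$, $R$ and in the point of $D$. First I would fix a finite atlas $\{(V_\beta,\psi_\beta)\}$ of $D$ on which $g_D$ is uniformly comparable to the Euclidean metric with all coordinate derivatives bounded; together with the standard coordinates on $\bbr^d$ this realizes a neighbourhood of an arbitrary point of $M$ as an open subset of $\bbr^{N}$, $N=d+\dim D$, carrying a metric that is exactly flat in the first $d$ slots and of bounded geometry overall, with ellipticity constants and $C^{k-1,\alp}$ bounds on the coefficients of $\Delta_{g_P}$ depending only on $(D,g_D)$ and independent of the location in the $\bbr^d$ factor.

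On a pair of concentric balls $B(p,\rho)\subset B(p,2\rho)$, rescaling the $\bbr^d$ factor to unit size turns the picture above into a uniformly elliptic problem on a fixed Euclidean domain with uniformly controlled coefficients, so the classical interior Schauder estimate applies with a constant independent of $\rho$ and $p$. Feeding this fixed‑ratio estimate into the usual Gilbarg--Trudinger weighted‑norm machinery over the interior balls $B\big(x,\tfrac12\,\mathrm{dist}(x,\partial B(p,R))\big)$, $x\in B(p,\rho)$, produces an inequality of the shape
\begin{align*}
  & \sum_{j=0}^{k}(R-\rho)^{j}\,\|\nabla^{j}u\|_{L^\infty(B(p,\rho))} + (R-\rho)^{k+\alp}\,[\nabla^{k}u]_{C^\alp(B(p,\rho))} \\
  & \qquad \leq C\Big(\|u\|_{L^\infty(B(p,R))} + \sum_{j=0}^{k-2}(R-\rho)^{j+2}\,\|\nabla^{j}\Delta_{g_P}u\|_{L^\infty(B(p,R))} + (R-\rho)^{k+\alp}\,[\nabla^{k-2}\Delta_{g_P}u]_{C^\alp(B(p,R))}\Big).
\end{align*}
The intermediate terms with $1\le j\le k-2$ are then removed by the standard interpolation inequalities on $B(p,R)$, and the surviving term $\|\Delta_{g_P}u\|_{L^\infty(B(p,R))}$ is absorbed using the equation itself (a bounded function on $B(p,R)$ whose Laplacian is large must have large Laplacian only where the Hölder seminorm of $\Delta_{g_P}u$ is also large, so that $(R-\rho)^{2}\|\Delta_{g_P}u\|_{L^\infty(B(p,R))}\lesssim\|u\|_{L^\infty(B(p,R))}+(R-\rho)^{k+\alp}[\nabla^{k-2}\Delta_{g_P}u]_{C^\alp(B(p,R))}$); this yields exactly the stated form.

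The step I expect to be the real content is the uniformity of $C_k$ across all scales $0<\rho<R$, and in particular the handling of the $D$‑directions at large scales. When $\rho$ and $R$ are large the ball $B(p,R)$ surjects onto $D$, so in those directions there is no ``interior'' and one must use the global elliptic estimate on the closed manifold $D$ rather than an interior one; the point that keeps the $(R-\rho)^{-k-\alp}$ weight correct is that the interior Schauder estimate for $\Delta_{g_P}$ on $\bbr^d\times D$ with one such ball removed has a ``distance‑to‑boundary'' factor that only sees the $\bbr^d$‑direction, since $\partial\big(B_{\bbr^d}(0,R)\times D\big)=\partial B_{\bbr^d}(0,R)\times D$ has uniformly bounded geometry and $D$ contributes no boundary. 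Verifying this combination — interior estimates in the $\bbr^d$‑factor glued to global estimates on $D$ via a covering of $B(p,\rho)$ by product regions $B_{\bbr^d}(z_i,c)\times V_\beta$ of fixed $D$‑size, with no spurious contribution from the artificial chart boundaries $\partial V_\beta$ — is the crux; the rescaling, the passage between weighted Hölder norms on concentric balls, and the interpolation bookkeeping above are routine. Finally, by translation invariance in $\bbr^d$ and compactness of $D$ it suffices to prove the estimate with $p$ in a fixed compact subset of $M$, which is where the uniformity ultimately comes from.
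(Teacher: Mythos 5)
This proposition is not proved in the paper at all: it is quoted verbatim from Hein--Tosatti \cite{H-T-20}, so there is no internal argument to compare yours against. Judged on its own terms, your sketch has a genuine gap, and it sits exactly at the point you yourself defer as ``the crux''. The whole content of the statement is that the constant $C_k$ is independent of the scale, i.e.\ that the coefficient of $\|u\|_{L^{\infty}(B(p,R))}$ is $(R-\rho)^{-k-\alp}$ even when $R-\rho$ is much larger than $\mathrm{diam}(D)$. Your covering-plus-bounded-geometry argument (unit-size product charts $B_{\bbr^d}(z_i,c)\times V_\beta$, interior Schauder at unit scale, global elliptic estimates on $D$) only yields the estimate with an $O(1)$ coefficient on $\|u\|_{L^\infty}$, which for $R-\rho\gg1$ is strictly weaker than the claim. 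The gain $(R-\rho)^{-k-\alp}$ encodes, already for $\Delta_{g_P}u=0$, the derivative estimate $|\nabla^k h|(p)\lesssim R^{-k}\|h\|_{L^\infty(B(p,R))}$ for harmonic functions on $\bbr^d\times D$; testing with $h(x,y)=e^{\sqrt{\lambda}x_1}\phi(y)$, $\Delta_D\phi=-\lambda\phi$, shows this is true only because of the spectral gap of $D$ (one needs $\sup_{\lambda\geq\lambda_1}(\sqrt{\lambda}R)^{k}e^{-\sqrt{\lambda}R}<\infty$), and no purely local covering or ``distance-to-boundary only sees the $\bbr^d$-direction'' heuristic can see this. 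One genuinely needs a global input on the product end: separation of variables/heat-kernel decay in the $D$-factor, or a blow-up/contradiction argument resting on a Liouville theorem for $\bbr^d\times D$, which is how this kind of uniform Schauder estimate is actually established in \cite{H-T-20}. Your closing remark that translation invariance in $\bbr^d$ plus compactness of $D$ reduces everything to $p$ in a compact set addresses only the (trivial) uniformity in $p$, not the uniformity in $R-\rho$.

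A secondary but real problem is the rescaling step: you cannot ``rescale the $\bbr^d$ factor to unit size'' and land in ``a uniformly elliptic problem on a fixed Euclidean domain with uniformly controlled coefficients''. Anisotropic rescaling turns $\Delta_{g_P}$ into $\rho^{-2}\Delta_{\bbr^d}+\Delta_D$, whose ellipticity degenerates as $\rho\to\infty$; rescaling the full metric instead collapses the $D$-factor, so the coefficients and injectivity radius are again not uniformly controlled. This is precisely why the product geometry is not scale-invariant and why the fixed-ratio estimate with scale-independent constant cannot be obtained by the classical Gilbarg--Trudinger machinery alone; it is fine for $R$ below a fixed size, but that regime is not where the difficulty lies. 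The interpolation bookkeeping and the absorption of $\|\Delta_{g_P}u\|_{L^\infty}$ via $r^{\alp}[\Delta_{g_P} u]_{C^{\alp}}+r^{-2}\|u\|_{L^{\infty}}$ are acceptable, but they do not repair the missing large-scale estimate.
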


We need to be careful when using this proposition, since it was proved under the product metric and doesn't depend on the complex structures.

\begin{prop}\label{prop3.12}
    Under the assumption of $\ALGone$ settings, we have $|u|\leq C$, $\nabla u\in \calc_{\delta}^{\infty}$, $\nabla^2 u\in \calc_{1+\delta}^{\infty}$. $i\ppbJ u \in \calc_{1+\delta}^{\infty}$.
\end{prop}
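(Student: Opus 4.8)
The plan is to bootstrap the decay of the solution $u$ from Proposition~\ref{ALGpoisson}, starting from the uniform bound $|u|\le C$ established there, by combining the weighted Schauder estimate of Hein--Tosatti with a separation of the $\bbr^2$-behaviour from the $D$-behaviour at the end. Since that Schauder estimate is stated for the product Laplacian $\Delta_{g_P}$ while our equation is $\Delta_g u=f$ with $f\in\calc^\infty_\mu$, $\mu>2$, I would keep $\Delta_g$ on the left and move the difference to the right, writing $\Delta_{g_P}u=f-(\Delta_g-\Delta_{g_P})u=:\tilde f$; by the computation~(\ref{eq3.3.3}) the operator $\Delta_g-\Delta_{g_P}$ is a sum of terms $\estp{r^{-1}}\nabla^2+\estp{r^{-2}}\nabla$ together with contributions involving $\db_J w$, $i\ppbJ w$ and the components $K,L$ of $J-J_P$, all controlled because $g-g_P,\ J-J_P\in\calc^\infty_1$ and $K\in\calc^\infty_2$ in the $\ALGone$/$\ALGot$ setting. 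A first, crude application of the Schauder estimate on balls $B(p,1)\subset B(p,2)$ with $r(p)\to\infty$, using only $\|u\|_{L^\infty}\le C$, gives $|\nabla^k u|\le C_k$ uniformly on the end for all $k$; hence $(\Delta_g-\Delta_{g_P})u=\estp{r^{-1}}$ and $\tilde f$ is $O(r^{-1})$ with all derivatives.

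Next I would extract decay by separation of variables near the end. Write $u=\bar u(z)+u^\perp(z,q)$, the fibrewise $L^2(D)$-average plus its orthogonal complement (extended to $M$ by a cutoff). Projecting $\Delta_{g_P}u=\tilde f$ onto the orthogonal complement of the constants in $L^2(D)$, the component $u^\perp$ solves a product Poisson equation with a spectral gap $\lambda_1>0$ in the $D$-directions, up to error terms that are lower order in the bootstrap; by the exponential-kernel bounds on the $B_i$ and on the Green's function $G$ from Section~3.2, $u^\perp$ and all its derivatives decay at the rate of its right-hand side, so after finitely many rounds $u^\perp\in\calc^\infty_\mu$. The average $\bar u$ is a bounded function on an exterior domain of $\bbr^2$ solving $\Delta_{\bbr^2}\bar u=\bar{\tilde f}(z)+(\text{terms in }\nabla u^\perp)$; once the right-hand side is seen to be integrable with vanishing total mass over $\bbr^2$ — which the boundedness of $\bar u$ forces, since a nonzero mass would produce $r\log r$ growth by Proposition~\ref{prop3.2}(1) — Proposition~\ref{prop3.2} together with the Laurent classification of bounded harmonic functions on $\bbr^2\setminus B_R$ gives $\bar u\to\mathrm{const}$ with $\nabla\bar u=O(r^{-\delta})$ and $|\nabla^k\bar u|\le C_k r^{-k}$ for $k\ge1$. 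Combining the two pieces yields $\nabla u\in\calc^\infty_\delta$.

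With $\nabla u\in\calc^\infty_\delta$ and $u^\perp\in\calc^\infty_\mu$ in hand, the right-hand side $\tilde f$ and its derivatives now lie in $\calc^\infty_{1+\delta}$ (in~(\ref{eq3.3.3}), the term $\estp{r^{-1}}\nabla^2u$ becomes $O(r^{-2-\delta})$, and similarly for the rest), so a rescaled application of the weighted Schauder estimate on balls of unit size at distance $r$ upgrades the $\bbr^2$-derivatives and gives $\nabla^2u\in\calc^\infty_{1+\delta}$; the $D$-derivatives cause no trouble, since $\bar u$ is independent of $q$, $u^\perp$ already decays like $r^{-\mu}$ with $\mu\ge1+\delta$, and $\calc^\infty_{1+\delta}$ only demands the improvement in the $\bbr^2$-derivatives. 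Finally, $i\ppbJ u=i\ppbJP u+d(J-J_P)*du$: the first term is controlled by $\nabla^2u\in\calc^\infty_{1+\delta}$, and the second is $O(r^{-2})*O(r^{-\delta})\in\calc^\infty_{2+\delta}\subset\calc^\infty_{1+\delta}$, with all derivatives estimated the same way, so $i\ppbJ u\in\calc^\infty_{1+\delta}$.

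The main obstacle is the middle step: upgrading mere boundedness of the fibrewise-average part $\bar u$ to genuine decay. This requires the vanishing of the total mass of $\bar{\tilde f}$ over $\bbr^2$, which must be derived from $\int_M f=0$ by an integration by parts whose boundary term at infinity is itself only controlled once some decay of $\nabla u$ is known — so the iteration has to be arranged so that each round legitimises the integration by parts used in the next. Keeping the non-product cross-terms and the perturbation $(\Delta_g-\Delta_{g_P})u$ strictly subleading throughout, and adapting the product-metric Schauder estimate of Hein--Tosatti to the operator $\Delta_g$, are the remaining technical burdens.
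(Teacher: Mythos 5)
There is a genuine gap, and it sits exactly where you flagged it: the argument has no way to get started. Your first round only yields $|\nabla^k u|\leq C_k$, hence $\tilde f=f-(\Delta_g-\Delta_{g_P})u=O(r^{-1})$, which is the borderline, non-integrable case. At that point your treatment of the fibrewise average $\bar u$ breaks down: the mass-balance argument needs $\bar{\tilde f}\in L^1$ of the exterior domain (and on an exterior domain there is in addition an uncontrolled flux through the inner boundary $\{r=R\}$, and the growth produced by a nonzero mass is $\log r$, not $r\log r$), and boundedness of $\bar u$ together with $\Delta_{\bbr^2}\bar u=O(r^{-1})$ does \emph{not} by itself give $\nabla\bar u=O(r^{-\delta})$ -- scaled interior estimates on balls $B(z,|z|/2)$ only return $|\nabla\bar u|=O(1)$. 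The right-hand side at this stage is dominated by $\estp{r^{-1}}\nabla^2\bar u$, i.e.\ by the very quantity you are trying to improve, so the loop ``better $\tilde f$ $\Rightarrow$ better $\bar u$ $\Rightarrow$ better $\tilde f$'' never closes: no round produces a strictly positive decay exponent from which the next round could start. (The $u^{\perp}$ part is fine thanks to the spectral gap, modulo ruling out bounded fibrewise-mean-zero $g_P$-harmonic functions on the end, which decay exponentially; that is not the issue.)

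The paper obtains the missing initial decay by a different mechanism, working with $\Delta_g$ directly: set $Q_R=\int_{r>R}|\nabla u|^2$, integrate $u\,\Delta_g u=uf$ by parts over $\{r>R\}$ after subtracting from $u$ its average on $\{r=R\}$, and use the Poincar\'e inequality on the cross-section $\{r=R\}\cong S^1_R\times D$ (whose constant grows like $R$) to get
\begin{align*}
Q_R\leq CR^{-(\mu-2)}-CR\,\frac{dQ_R}{dR},
\end{align*}
an ordinary differential inequality whose integration gives $Q_R\leq CR^{-\delta}$ with $\delta=\min(1/C,\mu-2)>0$, with no Fourier decomposition and no mass normalization needed. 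Once this energy decay is in hand, Moser iteration and Schauder give $|u-u_B|\leq Cr^{-\delta}$ and $|\nabla^k u|\leq Cr^{-\delta}$, and from there your remaining steps essentially coincide with the paper's: the Hein--Tosatti weighted Schauder estimate at scale comparable to $r$ upgrades $\nabla^2u$ to $\calc^{\infty}_{1+\delta}$ (at the cost of an arbitrarily small loss in the exponent), and $i\ppbJ u=i\ppbJP u+d\bigl((J-J_P)\ast du\bigr)$ with $K\in\calc^{\infty}_{1}$ gives $i\ppbJ u\in\calc^{\infty}_{1+\delta}$. If you want to rescue your separation-of-variables scheme, you must first prove some unconditional initial decay of $\nabla u$ (e.g.\ by exactly this energy/ODE argument, or by using $\int_M|\nabla u|^2<\infty$ from Proposition \ref{ALGpoisson} as the seed); as written, the proposal assumes the decay it is supposed to produce.
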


\begin{proof}
    Firstly we can fix a diffeomorphism $\Psi: M\setminus K\to B^c(0,R)\times D$.
    Let $M_R = \Psi^{-1}(B^c(0,R)\times Y)$ and $Q_R = \int_{M_R}|\nabla u|^2$ where $R\gg 1$. Using the equation and integration by parts, we have
    \begin{align}
        \int_{r>R}|\nabla u|^2= -\int_{r>R}uf + \int_{r=R}u\partial_{\nu}u
    \end{align}
    This equality still holds after we subtract $u$ by any constant $\lambda$. We can take $\lambda$ to be the average of $u$ over $\{r=R\}$. 
    Using the fact that $g-g_P\in \calc^{\infty}_1$ and Poincare inequality we have
    \begin{align}
        Q_R\leq CR^{-(\mu-2)}+C\int_{r=R}|u-\lambda||\nabla u|&\leq CR^{-(\mu-2)}+CR\int_{r=R}|\nabla u|^2   \nonumber\\
        &=CR^{-(\mu-2)}-CR \frac{dQ_R}{dR}.
    \end{align}
    Then it is not hard to deduce that 
    \begin{align}
        Q_R\leq C_1R^{-\frac{1}{c}}+ C_2R^{-(\mu-2)}\leq CR^{-\delta}.
    \end{align}
    where $\delta = \min(\frac{1}{C}, \mu-2)$.
    
    Denote $B=B(p,1)$ for some $p$ sufficiently far from a given point in $K$, and $u_B$ be the average of $u$ on $B$. Then $||u-u_B||_{L^2(B)}\leq Cr^{-\delta}$. Since $\Delta_g(u-u_B)=f\in \calc^{\infty}_1$ on $B$ and $\Delta_g$ is uniformly elliptic with respect to $g_P$, Moser iteration shows that $|u-u_B|\leq Cr^{-\delta}$ on a slightly smaller domain. Standard Schauder estimates shows $|\nabla^k u|\leq Cr^{-\delta}$ for $k\geq 1$.

    Next
    by checking the difference between two Laplacians, we have that $|\nabla^{k,g_P}\Delta_{g_P} u| = O(r^{-1-\delta})$. 
    For any $p\in M_R$ take $0<\frac{R}{4}<\frac{R}{2}< d(p,K)$, $\alp< \min\{\mu - 2,\delta\}$ in proposition 8.11, we get 
    \begin{align}
        [\nabla^2 u]_{C^{\alp}(B(p,\frac{R}{4}))}   \leq C_k([\Delta u]_{C^{\alp}(B(p,\frac{R}{2}))}+ \frac{4^{2+\alp}}{R^{2+\alp}}||u||_{L^{\infty}(B(p,\frac{R}{2}))})
    \end{align}
    Since $||u||_{L^{\infty}}\leq C$, we have $[\nabla^2 u]_{C^{\alp}(B(p,\frac{R}{4}))}=O(R^{-1-\delta})$. When integrating from infinity we have $|\nabla^{2,g_P}u|=O(R^{-1-\delta+\alp})$.

    Taking $\nabla_{\bbc}$ to the equation $\Delta_{g_P} u = O(r^{-1-\delta})$ and repeating the above argument using the weighted Schauder estimate, we can derive that $|\nabla^l_{\bbc}\nabla_Y u|=O(r^{-l-\delta'})$ and $|\nabla^l_{\bbc}\nabla^2_Y u|=O(r^{-l-1-\delta'})$ for any $\delta' < \delta$. We can adjust $\delta$ such that $i\ppbJP u\in \calc_{1+\delta}^{\infty}$. Since $i\ppbJ = i\ppbJP + d(Kdu)$ and $K\in \calc_{1}^{\infty}$, we have $i\ppbJ u \in \calc_{1+\delta}^{\infty}$.
    
\end{proof}

It is possible that we might have a better estimate of the decay rate of the derivatives or the growth rate of $u$. But it turns out that the estimate in proposition \ref{prop3.12} is enough to prove the final theorem.

Lastly, we arrive at our main theorem in this section:

\begin{thm}\label{thm3.11}
    Let $(M, g, J)$ be an $\ALGot$ manifold. For any $f\in \calc^{\infty}_{1}$, there is a solution to the Poisson equation $\Delta_g u=f$. Moreover, when fixing the diffeomorphism of the end, we have a decomposition of $u=u_1+u_2+u_3$ satisfying
    \begin{enumerate}
        \item $u_1(w,q)=u_1(w)$, $u_1\in \cald^{\infty}_{r\log r}$;
        \item $u_2\in \calc^{\infty}_1$;
        \item $u_3$ is bounded. $\nabla u_3\in \calc^{\infty}_{\delta}$, $\nabla^2 u_3\in \calc^{\infty}_{1+\delta}$.
    \end{enumerate}
    Moreover, $i\ppbJ u \in \calc^{\infty}_1.$ 
\end{thm}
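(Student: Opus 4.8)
The plan is to combine the three results already established in this section: the reduction of Lemma~\ref{lem3.3.1}, the bounded solvability of Proposition~\ref{ALGpoisson}, and the derivative estimates of Proposition~\ref{prop3.12}. The idea is to carry out the reduction while keeping explicit track of the two ``model'' summands that are peeled off -- one depending only on $w$ and carrying the $r\log r$ growth, one fiberwise mean-zero and decaying -- so that the full solution splits as $u=u_1+u_2+u_3$ in the stated way. I would fix a diffeomorphism of the end and work on $\bbc\times D$; the non-product case is handled by the cutoff argument at the end of the proof of Lemma~\ref{lem3.3.1}.

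Write $f=\bar f+f^{\perp}$, where $\bar f(w)=\frac1{|D|}\int_{\{w\}\times D}f\,\dVd$ is the fiberwise average and $f^{\perp}:=f-\bar f$ has fiberwise mean zero; both lie in $\calc^\infty_1$. One first solves $\Delta_{\bbr^2}v_1=\bar f$ on $\bbc$: since $|\nabla^k\bar f|=O(r^{-1-k})$, Proposition~\ref{prop3.2}(1) produces, after discarding linear terms, a $w$-only solution $v_1\in\cald^\infty_{r\log r}$. Next one solves $\Delta_{g_P}v_2=f^{\perp}$ on the product end with the Green's function $G$ of the product Poisson equation; because $f^{\perp}$ is fiberwise mean zero with $|\nabla^k f^{\perp}|=O(r^{-1-k})$ and $G$ decays like $e^{-\sqrt{\lambda_1}|z|}$ transverse to the fibers, the convolution $v_2=f^{\perp}*G$ satisfies $|\nabla^k v_2|=O(r^{-1-k})$, so $v_2\in\calc^\infty_1$. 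Since $\Delta_{g_P}(v_1+v_2)=f$, putting $u=v_1+v_2+u'$ reduces the equation to
\begin{align*}
\Delta_g u'=f-\Delta_g(v_1+v_2)=-(\Delta_g-\Delta_{g_P})(v_1+v_2)=:f_1 .
\end{align*}
Using (\ref{eq3.3.3})--(\ref{eq3.3.4}), the $\ALGot$ hypothesis $K\in\calc^\infty_2$ (hence $\db_J w,\,i\ppbJ w\in\calc^\infty_2$), and $|\nabla^k v_1|=O(r^{1-k}\log r)$, one checks that $f_1$ decays strictly faster than $r^{-2}$, say $f_1\in\calc^\infty_{2+\eta}$ for some $\eta\in(0,1)$; in particular $f_1$ is integrable on $M$.

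It then remains to solve $\Delta_g u'=f_1$. Take $\phi=\chi\log r$ as in Lemma~\ref{lem3.3.1}: since $\Delta_g\phi=O'(r^{-3})\in\calc^\infty_{2+\eta}$ and $\int_{B_R(0)\times D}\Delta_g\phi>0$, there is a constant $c$ with $f':=f_1-c\,\Delta_g\phi\in\calc^\infty_{2+\eta}$ and $\int_M f'=0$. Proposition~\ref{ALGpoisson} then yields $u_3$ with $\Delta_g u_3=f'$ and $\int|\nabla u_3|^2<\infty$, and Proposition~\ref{prop3.12} upgrades this to $|u_3|\le C$, $\nabla u_3\in\calc^\infty_{\delta}$, $\nabla^2 u_3\in\calc^\infty_{1+\delta}$, $i\ppbJ u_3\in\calc^\infty_{1+\delta}$. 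Setting $u_1:=v_1+c\phi$ (still a function of $w$ only, with $\chi\log r\in\cald^\infty_{\log r}\subset\cald^\infty_{r\log r}$) and $u_2:=v_2$, the function $u=u_1+u_2+u_3$ solves $\Delta_g u=f$ and satisfies (1)--(3). For the last claim, write $i\ppbJ=i\ppbJP+d\bigl((J-J_P)\lrcorner\, d\,\cdot\,\bigr)$ with $J-J_P=K+L$, $K\in\calc^\infty_2$, $L\in\calc^\infty_1$: for the $w$-only part, $i\ppbJP u_1=\tfrac14(\Delta_{\bbr^2}u_1)\,idw\wedge d\bar w$, with $\Delta_{\bbr^2}u_1=\bar f\in\calc^\infty_1$ on the end (the $\phi$-term being harmonic there), while the correction is of the type $\nabla^2u_1*K+\nabla u_1*\nabla K\in\calc^\infty_3$, so $i\ppbJ u_1\in\calc^\infty_1$; for $u_2\in\calc^\infty_1$ the same formula gives $i\ppbJ u_2\in\calc^\infty_3$; and $i\ppbJ u_3\in\calc^\infty_{1+\delta}$. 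Hence $i\ppbJ u\in\calc^\infty_1$.

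I expect the one delicate point to be the estimate that $f_1=-(\Delta_g-\Delta_{g_P})(v_1+v_2)$ decays strictly faster than $r^{-2}$: one has to weigh the logarithmic factors in the gradient of $v_1\in\cald^\infty_{r\log r}$ against the $\calc^\infty_2$-decay of $K$ and its first derivative in (\ref{eq3.3.4}), and check that the mixed and quadratic terms in (\ref{eq3.3.3}) are even smaller. If only $\calc^\infty_{2-\eps}$ were available at this point, one would simply iterate the reduction once more, exactly as in Lemma~\ref{lem3.3.1}, at the cost of a further $w$-only summand which still lies in $\cald^\infty_{r\log r}$ since its planar source now decays faster than $r^{-2}$; the bookkeeping is unchanged. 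Everything else is a direct application of the three quoted propositions.
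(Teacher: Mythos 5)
Your proposal is correct and takes essentially the same route as the paper: Theorem \ref{thm3.11} is exactly the assembly of the reduction in Lemma \ref{lem3.3.1} (fiberwise average handled by Proposition \ref{prop3.2}, the fiberwise mean-zero part by the product Green's function, comparison of $\Delta_g$ with $\Delta_{g_P}$, and the $\chi\log r$ correction to achieve $\int f'=0$), followed by Proposition \ref{ALGpoisson} and Proposition \ref{prop3.12}. The only caveat is that your optimistic one-step claim $f_1\in\calc^{\infty}_{2+\eta}$ fails, since the term $\nabla v_1*\nabla_D K$ in (\ref{eq3.3.4}) is only $O(\log r/r^2)$, i.e.\ $\calc^{\infty}_{2-\eps}$, so the fallback you yourself describe (repeating the reduction once more, exactly as in the paper) is the step actually needed; similarly $i\ppbJ u_2$ lies only in $\calc^{\infty}_{1}$ rather than $\calc^{\infty}_{3}$ because pure fiber second derivatives gain no decay, but neither point affects your conclusion.
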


\section{The Holomorphic Splitting Theorem}

Denote $\pi:= U\rightarrow \Delta$ corresponding to $\Phi$.
Given any K\"ahler metric $\omega_0$ over $\oM$, we can find a unique $u_0\in C^{\infty}(D)$, up to a constant, such that $\omega_0+i\partial \bar{\partial}u_0|_D$ is a Calabi-Yau metric. Then we extend $u_0$ to be constant along $\Delta$ and use a cutoff function $\eta$ pulled back from $\Delta$ to make $u_0$ vanishing away from $\Delta\times D$. $\eta$ is supported in $\Delta$ and equals 1 in $\Delta(\frac{1}{2})$. If $\Delta$ is small enough and $\pi$ is holomorphic, then $\omega_0+i\partial \bar{\partial}u_0$ is positive along each fiber over $\Delta$, and may not be positive in $(\Delta\setminus\Delta(\frac{1}{2}))\times D$. The idea to compensate for this is that we can take any sufficiently positive 2 form $\alpha_1$ support in $\Delta$ and pull it back to U to compensate for the non-positive part of the Hermition form. A trial might be $\omega_1:=\omega_0+i\partial \bar{\partial}u_0+\pi_1^*\alpha_1$ for some very positive 2-form $\alp_1$ on $(\Delta\setminus\Delta(\frac{1}{2}))$. This is very close to a Kähler form. But $\pi_1^*\alpha_1$ might not be a $(1,1)$-form. We adapt two lemmas from \cite{HHN} to modify $\pi^*\alp_1$, restating them in the infinity chart. 

\begin{lemma}[\cite{HHN}]\label{lem 4.1.3}
Let $\gamma$ be a radial $2$-form with compact support on $B^c_R(0)$. $\gamma = f\cdot idzd\bar{z} = f\cdot \frac{1}{|w|^4}idwd\bar{w}$.
\begin{enumerate}
\item There exists a unique radial function $v$ on $B^c_R(0)$ such that $v \equiv 0$ near $\partial B_R(0)$ and $i\ppbJz v = \gamma$. Here, $J_0$ is the standard complex structure on $\bbc$.
\item We have $v(w) = (\int \gamma)\log |w| + \widehat v(w)$, where $\widehat v$ is radial and bounded when $|w|\gg 1$. $\widehat v=-(\int \gamma)\log |w|$ on $B^c_R(0)$.
\item On $B^c_R(0)$ we have derivative estimates $|\partial_r \widehat{v}(w)| \leq \frac{\phi(r)}{r}$, $\mathrm{supp}(\partial_r \widehat{v}(w))\subset \mathrm{supp}(\gamma)$  and $|\partial_r^2 \widehat{v}| \leq (\frac{\phi(r)}{r^2}+\max_{w\in\partial B_r(0)}|\frac{f(w)}{r^4}|)$. Here $\phi(r) = \int_{B_r(0)\setminus B_R(0)}\gamma$.
\end{enumerate}
\end{lemma}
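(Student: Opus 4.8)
The plan is to reduce the equation $i\ppbJz v = \gamma$ to an ordinary differential equation in the radial variable $r = |w|$ of flat $\bbr^2$ and integrate it explicitly; note that here $J_0$ is the \emph{standard} complex structure on $\bbc$, so the problem is genuinely one-dimensional and independent of the ambient complex structure. For a radial function $v = v(r)$ one has $i\ppbJz v = \tfrac12(\Delta v)\,dx\wedge dy$ with $\Delta v = r^{-1}(rv')'$ the flat Laplacian; writing $\gamma = \rho(r)\,dx\wedge dy$ and recalling $\phi(r) = \int_{B_r(0)\setminus B_R(0)}\gamma$, so that $\phi'(r) = 2\pi r\,\rho(r)$, the equation becomes $(rv')' = 2r\rho(r)$. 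Since $\gamma$ vanishes in a neighborhood of $\partial B_R(0)$, the requirement that $v$, hence $v'$, vanish near $\partial B_R(0)$ pins the constant of integration to zero, giving (suppressing a harmless factor of $\pi$ coming from the normalization of $i\ppbJz$)
\begin{align*}
    v'(r) = \frac{\phi(r)}{r}, \qquad v(r) = \int_R^r \frac{\phi(s)}{s}\,ds .
\end{align*}
The integrand is smooth for $s > R$, so $v$ is a smooth radial function on $B^c_R(0)$, and $\phi\equiv 0$ near $\partial B_R(0)$ gives $v\equiv 0$ there; this proves existence in (1). For uniqueness within the radial class, a radial $u$ with $i\ppbJz u = 0$ is harmonic, hence of the form $a\log r + b$, and vanishing on an open annulus forces $a = b = 0$.

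For (2), since $\gamma$ has compact support, $\phi(r)$ is eventually constant and equal to $\int\gamma$. Splitting $\phi(s) = \int\gamma - \bigl(\int\gamma - \phi(s)\bigr)$ in the formula for $v$ and noting that $\int\gamma - \phi(s) = \int_{|w|>s}\gamma$ is compactly supported in $s$, we obtain $v(w) = \bigl(\int\gamma\bigr)\log|w| + \widehat v(w)$ with $\widehat v := v - \bigl(\int\gamma\bigr)\log|w|$ bounded — in fact eventually constant — as $|w|\to\infty$; on the annular region between $\partial B_R(0)$ and $\mathrm{supp}(\gamma)$, where $v\equiv 0$, this reads $\widehat v = -\bigl(\int\gamma\bigr)\log|w|$. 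Part (3) then follows by differentiating the explicit formula: $\partial_r\widehat v = \bigl(\phi(r) - \int\gamma\bigr)/r$, which vanishes past $\mathrm{supp}(\gamma)$ and is $O(\phi(r)/r)$ elsewhere, while $\partial_r^2 v = \phi'(r)/r - \phi(r)/r^2$ with $\phi'(r) = 2\pi r\rho(r)$ and $\rho$ the $dx\wedge dy$-density of $\gamma$ (so $|\rho| = 2|f(w)|/r^4$ on $\partial B_r(0)$); this yields $|\partial_r^2\widehat v|\lesssim \phi(r)/r^2 + \max_{w\in\partial B_r(0)}|f(w)/r^4|$ once the explicit $O(r^{-2})$ correction coming from $\partial_r^2\log r$ is absorbed.

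The argument is elementary, and I do not expect a substantive obstacle. The two points to be careful about are: (i) keeping in mind that $\ppbJz$ is taken with respect to the flat structure $J_0$, so the PDE really collapses to a single-variable ODE; and (ii) fixing the constant of integration by the boundary condition $v\equiv 0$ near $\partial B_R(0)$, which is exactly what makes the coefficient of $\log|w|$ in the expansion equal to $\int\gamma$ and what underlies the support and boundedness assertions in (2)–(3); tracking this normalization carefully (and, if desired, the precise cutoff convention used to define $\widehat v$ in the annular region) is the only mildly delicate bookkeeping.
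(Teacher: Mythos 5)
Your proposal is correct and follows essentially the same route as the paper, which establishes this lemma by the identical reduction of $i\ppbJz v=\gamma$ to the radial ODE $(rv')'=2r\rho$ and explicit double integration (the paper simply refers to the radial $n=0$ computation of its Appendix A and to \cite{HHN}); your formula $v(r)=\int_R^r\phi(s)s^{-1}\,ds$, up to the normalization constant you flag, yields (1)--(3) directly. Your reading of (2)--(3) --- that $\widehat v=-(\int\gamma)\log|w|$ holds on the annulus between $\partial B_R(0)$ and $\mathrm{supp}(\gamma)$ where $v\equiv 0$, and that the support and derivative bounds for $\partial_r\widehat v$, $\partial_r^2\widehat v$ are meant from $\mathrm{supp}(\gamma)$ outward --- is the intended one.
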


The proof of lemma \ref{lem 4.1.3} simply follows the same argument as in the appendix. Using this lemma, we can derive the following estimate:
\begin{cor}[\cite{HHN}]\label{corollary 4.7}
Let $v(w,x):=v(w)$ be the function on $B^c_R(0)\times D$. Then $i\ppbJP v=\pi_1^* \gamma$ and $i\ppbJ v-i\ppbJP v = -(\int \gamma) \eta + \widehat\gamma$, where 
\begin{align}
\eta = (i\partial\bar{\partial}-i\ppbJP)\log |w| = \begin{cases}0 &{\textit{horizontally}},\\
O'(|w|^{-4}) & {\textit{mixed\;directions}},\\
O'(|w|^{-3}) & {\textit{vertically}};\end{cases}\\
\widehat{\gamma} =(i\partial\bar{\partial}-i\ppbJP)\widehat v = \begin{cases}0 &{\textit{horizontally}},\\
\int \gamma \cdot O'(|w|^{-3}) + \max |f|\cdot O'(|w|^{-6}) & {\textit{mixed\;directions}},\\
\int \gamma \cdot O'(|w|^{-3}) & {\textit{vertically}}.\end{cases}
\end{align}
The implied constants here are independent of $\gamma$ and, in fact, depend only on $K$.
\end{cor}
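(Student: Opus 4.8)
First, $i\ppbJP v=\pi_1^*\gamma$ is immediate: $v(w,x)=v(w)$ is pulled back along $\pi_1\colon B^c_R(0)\times D\to B^c_R(0)$ and $J_P=J_0+J_D$ is a product, so only the $\bbc$-derivatives of $v$ enter and $i\ppbJP v=\pi_1^*(i\ppbJz v)=\pi_1^*\gamma$ by Lemma~\ref{lem 4.1.3}(1). For the error term I would use the pointwise identity on functions $i\ppbJ u-i\ppbJP u=d\bigl((J-J_P)\,du\bigr)$ already used in the proof of Lemma~\ref{lem3.3.1}, where $(J-J_P)\,du$ is the $1$-form $X\mapsto du\bigl((J-J_P)X\bigr)$. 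With $u=v$ and the gauge-fixed decomposition $J-J_P=K+L$, $K\in\Gamma(T\Delta\otimes T^*D)$, $L\in\Gamma(TD\otimes T^*D)$, one has that $J-J_P$ annihilates horizontal vectors, $K$ maps vertical vectors to horizontal ones and $L$ keeps vertical vectors vertical; since $dv$ is horizontal it kills $L\,dv$, so $(J-J_P)\,dv=dv\circ K=:\beta$, a $1$-form vanishing on $T\Delta$. Splitting $v=(\int\gamma)\log|w|+\widehat v$ as in Lemma~\ref{lem 4.1.3}(2) splits $\beta$, hence $d\beta$, into a $\log|w|$-part $\eta=d\bigl(d\log|w|\circ K\bigr)$ and a $\widehat v$-part $\widehat\gamma=d\bigl(d\widehat v\circ K\bigr)$; with the sign convention of Lemma~\ref{lem 4.1.3}(2) this reads $i\ppbJ v-i\ppbJP v=-(\int\gamma)\eta+\widehat\gamma$.

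Next I would read off the block structure of $d\beta$ by pairing with commuting coordinate fields: $X$ horizontal (from $w$) and $V,W$ vertical (from $D$). Since $\beta(X)\equiv0$, $(d\beta)(X,Y)=0$ for horizontal $X,Y$, which is the ``$0$ horizontally'' clause. For a mixed pair $(d\beta)(X,V)=X\bigl(\beta(V)\bigr)$ and for a vertical pair $(d\beta)(V,W)=V\bigl(\beta(W)\bigr)-W\bigl(\beta(V)\bigr)$. Writing $\beta(V)=dv(KV)=v'(r)\,dr(KV)$, a horizontal derivative either hits the radial factor, producing $v''(r)$, or the components of $K$, i.e.\ a horizontal derivative of $K$, which under the standing assumption $K\in\calc_2^\infty$ (guaranteed by triviality of $\calo_{3D}(D)$) gains an extra power $r^{-1}$; a vertical derivative never sees $r$ and lands only on $K$, where by definition of $\calc_\delta^\infty$ it does not change the decay rate.

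Plugging in $|K|_{g_P}=O(r^{-2})$, $|\nabla_{\bbr^2}K|_{g_P}=O(r^{-3})$, $|\nabla_D K|_{g_P}=O(r^{-2})$ together with $|\partial_r\log r|=r^{-1}$, $|\partial_r^2\log r|=r^{-2}$ gives $\eta=O'(r^{-4})$ in the mixed directions and $O'(r^{-3})$ vertically, while $\eta=0$ horizontally. For $\widehat\gamma$ one uses Lemma~\ref{lem 4.1.3}(3): $|\partial_r\widehat v|\leq\phi(r)/r$, $|\partial_r^2\widehat v|\leq\phi(r)/r^2+\max_{\partial B_r(0)}|f|/r^4$, and $\mathrm{supp}(\partial_r\widehat v)\subset\mathrm{supp}(\gamma)$, so $\widehat\gamma$ is supported in $\mathrm{supp}(\gamma)$, vanishes horizontally, and (bounding $\phi$ by $\int\gamma$) has the two stated contributions in the mixed directions and the contribution $\int\gamma\cdot O'(r^{-3})$ vertically. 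That these error $2$-forms actually lie in $\calc_\delta^\infty$ rather than merely having $C^0$-decay follows by iterating the same computation on higher covariant derivatives, using that all derivatives of $K$ are controlled since $K\in\calc_2^\infty$; and since every $\gamma$-dependence has been made explicit through $\int\gamma$, $\phi$, and $\max|f|$, the remaining constants depend only on the end, i.e.\ on $K$.

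The main obstacle is the bookkeeping in the last two steps: keeping the horizontal/mixed/vertical decomposition of $d\beta$ straight while exploiting the asymmetry that horizontal derivatives of $K$ decay one power faster than $K$ but vertical derivatives do not, and then upgrading $C^0$-bounds to membership in $\calc_\delta^\infty$. The only genuinely non-routine point is fixing the normalization and sign of the identity $i\ppbJ v-i\ppbJP v=d\bigl((J-J_P)\,dv\bigr)$ so that the final answer matches $-(\int\gamma)\eta+\widehat\gamma$ with $\eta,\widehat\gamma$ as defined.
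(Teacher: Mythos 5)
Your proposal is correct and takes essentially the same route as the paper: the paper's entire proof is the single identity $i\ppbJ v-i\ppbJP v=-\tfrac{1}{2}d(K\circ dv)=2\mathrm{Im}\,(dv_w\wedge\bar{\partial}w+v_wd\bar{\partial}w)$, which is exactly your $d\bigl((J-J_P)\,dv\bigr)=d(dv\circ K)$ up to normalization, combined with the decay of $K$ and Lemma \ref{lem 4.1.3}. The horizontal/mixed/vertical bookkeeping and the upgrade to $O'$-type bounds that you spell out are precisely the details the paper leaves implicit, and the residual sign in $-(\int\gamma)\eta$ is only the normalization/chart convention you already flag.
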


The proof of corollary \ref{corollary 4.7} follows from the formula
\begin{align*}
    i\ppbJ v-i\ppbJP v = -\frac{1}{2}d(K\circ dv) = 2\mathrm{Im}\,(dv_w\wedge\bar{\partial}w+v_wd\bar{\partial} w).
\end{align*}

Now suppose $i\ppbJz v_1 = \alp_1$ on the disc. Denote $\omega_1:=\omega_0+i\partial \bar{\partial}(u_0 + v_1)$. For $r\gg 1$, this yields a positive $(1, 1)$-form. We will use this technique several times to ensure the positivity of the $(1,1)$-form. 

Next, we can construct a complete $\ALGot$ metric on $M$. Let $\dVd = \omega_1|_{D}^{n-1}$. We can rescale $\Omega$ by some complex number such that:
\begin{align}
    \Omega\wedge\bar{\Omega}=(\frac{1}{|z|^4}+O(|z|^{-3}))idz\wedge d\bar{z}\wedge \dVd = (1+O(|w|))dw\wedge d\bar{w}\wedge \dVd
\end{align}
The K\"ahler potential for the main part should be $|w|^2$. Under the complex structure $J$ we have:
\begin{align}
    \ppb|w|^2&=\partial w\bar{\partial}w + \partial\bar{w}\bar{\partial}\bar{w}+w\ppb\bar{w}+\bar{w}\ppb w  \nonumber\\
    &=dwd\bar{w} -2i\Image (dw\partial\bar{w}) + 2i\Image (w\ppb\bar{w}) 
\end{align}
Since $\bar{\partial}w=-w^2\bar{\partial}z = -w^2\frac{i}{2}K\circ dz\in \Gamma(T^*D)$ and $\bar{\partial}w=O(r^{-2})$, it follows that $\ppb w\in\Gamma(T^*D\wedge T^*D\oplus T^*D\wedge T^*\Delta)$ and $\ppb w=O(r^{-2})$. Additionally:
\begin{align}
    \Image (dw\partial\bar{w})&=O(\frac{1}{r^2})\in \Gamma(T^*D\wedge T^*\Delta) \\
    \Image (w\ppb\bar{w})&=O(\frac{1}{r})\in \Gamma(T^*D\wedge T^*D\oplus T^*D\wedge T^*\Delta) \label{eq4.1.13}
\end{align}
Under local coordinates, these two error terms lie in class $\calc_1^{\infty}$. We can take a cutoff function and sufficiently Focusing on the neighborhood $U$ of $D$ in $\oM$, it's easy to see that the form $\om_2 = \om_1 + i\ppb (\chi |w|^2)+ i\ppb v_2 \pi_1^*\alp_2$ satisfies 
\begin{align}
    \om_2 -\om_P \in \calc_1^{\infty},\quad \omega_2^n=(1+O(|w|))idw\wedge d\bar{w}\wedge \dVd
\end{align}
Here $v_2$ is the potential satisfying $i\ppbJz v_2 = \alp_2$ for some sufficiently positive $(1,1)$-form on $B^c_R(0)$. It is added to compensate for the negative component of the Kähler form. Now, the Ricci potential is given by:
\begin{align}
    f_2:=\log(\frac{i^{n^2}\Omega\wedge\bar{\Omega}}{\omega_2^n})\in\calc_1^{\infty}
\end{align}
By definition $(M:=\oM\setminus D, \om_2, J)$ is an $\ALGot$ manifold. The volume growth of this metric is $O(r^2)$, so $\int_M(e^{f_2}-1)\omega^n$ is not necessary finite. 

\begin{rem}
    Equation \ref{eq4.1.13} is the main technical reason why we need $\calo_{3D}(D)$ to be trivial.
\end{rem}

Lastly, we refine the Kähler metric so that the Kähler potential satisfies $f\in \calc_{\mu}^{\infty}$ for some $\mu > 2$ and $\int_M(e^f - 1)\om^n = 0$.

Using theorem \ref{thm3.11}, take $R\gg 1$ and $u=u_1+u_2+u_3$ such that $\Delta_{\om_2} u = f_2$ on $M_R:=\Psi^{-1}((B^c_R(0)\times D))$. Take a cutoff function $\chi$ such that $\chi \equiv 0$ on $M_R$ and $\chi\equiv 1$ on $M_{2R}^c$. Let $\om_3 = \om_2 + i\ppb (\chi u) + i\ppb v_3$. By analyzing the growth/decay rates of $u_i$, we have $i\ppb(\chi u)|_{\Lambda^2T^*D}=O(\frac{\log r}{r})$. The term with the slowest decay rate is $u\nabla \chi\cdot d\db w|_{\Lambda^2T^*D}$. Thus, for $R\gg1$, the Kähler form remains positive in the fiber direction. Adding a sufficiently large positive form $\pi_1^*\alpha_3$ keeps $\om_3$ positive. Now, the Ricci potential 
\begin{align}
    f_4=f_2 - \log (1+\Delta_{\om_2}u + \calq_{\om_2}(i\ppb u))\in \calc_{2}^{\infty},
\end{align}
where $\calq_{\om_2}(i\ppb u) = \sum_{k=2}^{n}{n\choose k}(i\ppb u)^k\wedge\om_2^{n-k}/\om_2^n$.
Repeating this procedure one more time yields a new Kähler metric $\om_4$ such that the Ricci potential $f_4\in \calc_{\mu}^{\infty}$ for some $\mu > 2$. And $(M,\om_4,J)$ is still an $\ALGot$ manifold. Now $\int_M(e^{f_4} - 1)\,\om_4^n$ is finite.

Assume temporarily that $\int_M(e^{f_3} - 1)\,\om_4^n< 0$. On the end, let $\pi_1: M_R\to B^c_R(0)\subset \bbc$. Take $\eps > 0 $, $r_2 > r_1 \gg R+1$ and $i\ppbJz v_4 = \alp_4(r_1,r_2)$ as a positive 2-form on $B^c_R(0)$ such that it is compact supported on $B_{r_2+1}(0)\setminus B_{r_1-1}(0)$ and is equal to $idwd\bar{w}$ on $B_{r_2}(0)\setminus B_{r_1}(0)$. Let $\om = \om_4 + \eps i\ppb v_4$. By corollary (\ref{corollary 4.7}), if $\eps$ is small enough, $\om$ remains positive, and the choice of $\eps$ is independent of $\int \alp_4\approx \pi(r_2^2-r_1^2)$. Notice that $\lim_{r_2\to +\infty}\int \alp_4(r_1, r_2) = +\infty$, then there exists $r_1, r_2$ such that: 
\begin{align}
    \int_M (\frac{i^{n^2}\Om\wedge\bar{\Om}}{\om^n}-1)\om^n = \int_M(e^{f_4}-1)\om_3^n + A(\eps,r_1, r_2) = 0.
\end{align}
Here $A(\eps,r_1,r_2)$ is the term involving $\eps i\ppb v_4$.
The positive case can be handled using the same strategy. We take $\om = \om_3 - \eps i\ppb v_4$. Then there are $\eps_0>0$, $r_0 \gg R+ 1$ such that for all $\eps < \eps_0$, $r_2>r_1>r_0$, the form $\om$ is a Kähler form. We can first fix $\eps$ small enough and then let $r_2$ go to infinity to see that there are $\eps, r_1, r_2$ that make $\int_M(\frac{\om^n}{i^{n^2}\Om\wedge\bar{\Om}}-1)\om^n=0$.

This procedure to make $\int_M (e^f-1)\om^n = 0$ still work on $\ALGot(\frac{2\pi}{n})$ manifolds.

\subsection{Existence of the ALG Calabi-Yau metric and the Holomorphic Splitting Theorem}
Suppose $M$ is an $\ALGot$ manifold satisfying the required decay properties, then we can solve the Monge-Ampère equation on $M$ if the Ricci potential has the integrability condition. Here is the concrete statement from \cite{T-Y-1990} and \cite{Hein-10}:
\begin{thm}[\cite{T-Y-1990},\cite{Hein-10}]\label{thm4.10}
    Let $f\in \calc^{2,\alpha}_{\mu}(M)$ for some $\mu > 2$. If $\int_{M}(e^f-1)\om^n=0$, then there are $\alpha'\in (0, \alpha]$ and $u\in\calc^{4, \alpha}(M)$ such that
    \begin{align}
        (\om+i\ppb u)^m=e^f\om^m.
    \end{align}
    Moreover, $\int_M|\nabla u|^2\om^m<\infty$. If $f\in \calc^{k,\alpha'}_{loc}(M)$ for some $k>2$, then $u\in \calc^{k+2,\alpha'}_{loc}(M)$.
\end{thm}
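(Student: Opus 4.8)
The proof is by the continuity method, with openness supplied by the linear theory of Section~3 and closedness by a priori estimates resting on the weighted Sobolev inequality of Proposition~\ref{sobolev}. First I would choose a continuity path that never leaves the ``zero mass defect'' class: for $t\in[0,1]$ set $f_t=\log\bigl(1+t(e^f-1)\bigr)$, so that $f_0\equiv 0$, $f_1=f$, each $f_t\in\calc^{2,\alpha}_{\mu}$, and $\int_M(e^{f_t}-1)\om^n=t\int_M(e^f-1)\om^n=0$. Let $S$ be the set of $t$ for which there is a bounded $u_t\in\calc^{4,\alpha}(M)$, with derivatives in the weighted classes of Proposition~\ref{prop3.12}, solving $(\om+i\ppb u_t)^n=e^{f_t}\om^n$. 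Then $0\in S$ with $u_0=0$, and it remains to show $S$ is open and closed.

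For openness, fix $t_0\in S$. The closedness estimates below show $(M,\om_{t_0})$ is again an $\ALGot$ manifold, so by Proposition~\ref{ALGpoisson}, Proposition~\ref{prop3.12} (and their straightforward H\"older refinements) the K\"ahler Laplacian $\Delta_{\om_{t_0}}=\frac{n\,i\ppb(\cdot)\wedge\om_{t_0}^{n-1}}{\om_{t_0}^n}$ is an isomorphism, modulo constants, from the weighted $\calc^{4,\alpha}$ space onto $\{g\in\calc^{2,\alpha}_{\mu}:\int_Mg\,\om_{t_0}^n=0\}$. Writing $u=u_{t_0}+v$, the equation becomes $\frac{(\om_{t_0}+i\ppb v)^n}{\om_{t_0}^n}=e^{f_t-f_{t_0}}$; by Stokes the left side minus $1$ always integrates to $0$ against $\om_{t_0}^n$, and by construction $\int_M(e^{f_t-f_{t_0}}-1)\om_{t_0}^n=\int_M(e^{f_t}-e^{f_{t_0}})\om^n=0$, so the implicit function theorem in these Banach spaces produces $u_t$ for $t$ near $t_0$. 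Hence $S$ is open.

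Closedness reduces to uniform a priori estimates for $u_t$, $t\in S$. The $C^0$ estimate is the crucial one: running Moser iteration as in the proof of Proposition~\ref{ALGpoisson} — testing the equation against $|u_t-\bar u_\varepsilon|^{p-2}(u_t-\bar u_\varepsilon)$ and invoking the weighted Sobolev inequality of Proposition~\ref{sobolev} — together with the energy identity $\int_M|\nabla u_t|^2\om^n\le C\int_M|u_t|\,|e^{f_t}-1|\,\om^n$ and $e^{f_t}-1=O(r^{-\mu})$ with $\mu>2$, yields $\|u_t\|_{L^\infty}\le C$ and $\int_M|\nabla u_t|^2\om^n\le C$ uniformly in $t$. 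Next, since $\om$ has bounded geometry and bounded bisectional curvature (being $\ALGot$), Yau's second-order computation gives $\Delta_{\om_t}\bigl(\log\mathrm{tr}_\om\om_t-Au_t\bigr)\ge\mathrm{tr}_{\om_t}\om-C$ for $A\gg 1$, and since $u_t$ is bounded a maximum-principle argument at infinity gives $C^{-1}\om\le\om_t\le C\om$ uniformly. Interior Evans--Krylov and Schauder estimates then give uniform local $\calc^{4,\alpha}$ bounds, and bootstrapping the weighted Schauder estimate of Hein--Tosatti (\cite{H-T-20}) on $\Delta_\om u_t=-f_t+\calq_\om(i\ppb u_t)$, as in the proof of Proposition~\ref{prop3.12}, upgrades these to the required weighted decay; in particular $\om_t-\om_P\in\calc^{\infty}_1$, which is the fact fed back into openness. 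Hence $S$ is closed, so $S=[0,1]$.

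Taking $u:=u_1$ then solves $(\om+i\ppb u)^n=e^f\om^n$ with $\int_M|\nabla u|^2\om^n<\infty$, and the improved local regularity $u\in\calc^{k+2,\alpha'}_{\mathrm{loc}}$ when $f\in\calc^{k,\alpha'}_{\mathrm{loc}}$ follows by a standard elliptic bootstrap using the uniform ellipticity from the second-order estimate. The \emph{main obstacle} is the $C^0$ estimate: the end has volume growth $O(r^2)$, so the classical complete-manifold $C^0$ bounds do not apply and one must run Moser iteration against the nonstandard two-parameter weighted Sobolev inequality of Proposition~\ref{sobolev}; a secondary technical point is verifying that the continuity path stays in the zero-mass-defect class and that $\Delta_{\om_{t_0}}$ has no cokernel there, which is exactly where the hypothesis $\int_M(e^f-1)\om^n=0$ enters.
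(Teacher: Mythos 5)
Your proposal is correct and follows essentially the same route as the paper, which simply defers to the continuity-method proof of \cite{Hein-10} (with the Tian--Yau path $e^{f_t}-1=t(e^f-1)$) and notes that the only modification is to run the Moser iteration for the $C^0$ estimate against the weighted Sobolev inequality of Proposition \ref{sobolev} — exactly the point you identify as the main obstacle. The remaining ingredients you list (openness via the weighted linear theory, Yau's second-order estimate, Evans--Krylov, and the weighted Schauder bootstrap) are the standard ones from \cite{Hein-10} and match the paper's intent.
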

The proof mainly follows that in \cite{Hein-10}. One can easily see that $(M,\om,J)$ has a $C^{k,\alp}$ quasi-atlas for $k\geq 3$. The only difference in the proof is that we need to apply our new weighted Sobolev inequality in proposition \ref{sobolev}.

\begin{rem}
    One can check that if $(M, g, J)$ is an $\ALGot(\frac{2\pi}{n})$ manifold with $n\geq 2$, then the existence of a solution to the complex Monge-Ampère equation still holds. This happens when $M$ is the blow-up of $\bbc\times D$ along the singular varieties. See \cite{Y-22}.
\end{rem}

Since we mainly focus on the case where $f\in C^{\infty}$, we have all higher order estimates on $u$. Then the metric $\om_u$ is uniform with respect to $\om$. The integration-by-part procedure also works for the Monge-Ampère equation. Analogous to proposition \ref{prop3.12}, we derive decay estimates for the solution:
\begin{prop}
    Fix the product structure of the end and take $R \gg 1$, $B_R = B^c_R(0)\times D\subset \bbc \times D$, then there exists $\delta > 0$ and a uniform constant $C$ such that, for any $x\in B_{R+2}$, 
    \begin{align}
        |u|\leq C, \quad \nabla u\in \calc_{1}^{\infty}, \quad \nabla^2 u\in \calc_{2}^{\infty}
    \end{align}
Moreover, the Kähler manifold $(M, \om_u, J)$ is an $\ALGot$ manifold.
    
\end{prop}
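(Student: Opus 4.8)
The statement is the decay/regularity package for the Monge–Ampère solution $u$ together with the claim that $(M,\om_u,J)$ is again $\ALGot$. The plan is to mimic the argument for the Poisson equation in Proposition~\ref{prop3.12}, replacing the linear equation $\Delta_g u = f$ by the nonlinear $(\om+i\ppb u)^n = e^{f_4}\om^n$ and using that, thanks to Theorem~\ref{thm4.10}, $u$ has $\int_M|\nabla u|^2 < \infty$ and that $\om_u = \om + i\ppb u$ is uniformly equivalent to $\om$ (hence also to the product metric $g_P$ on the end, up to $\calc_1^\infty$ errors). Rewriting the equation as $\Delta_{\om} u = \log(1+\Delta_\om u + \calq_\om(i\ppb u)) + f_4 =: F$, one checks that $F$ lies in $\calc_\mu^\infty$ plus lower-order terms quadratic in $i\ppb u$, so the solution behaves essentially like a solution of a Poisson equation with fast-decaying, integrable right-hand side.

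First I would run the energy decay estimate exactly as in Proposition~\ref{prop3.12}: set $Q_R = \int_{M_R}|\nabla u|^2_{\om_u}$, integrate the equation by parts over $\{r>R\}$ against $u-\lambda$ with $\lambda$ the boundary average of $u$ on $\{r=R\}$, use $\int_M|\nabla u|^2<\infty$ together with the Poincaré inequality on the cross-section, and derive a differential inequality of the form $Q_R \le C R^{-(\mu-2)} - CR\,dQ_R/dR$, giving $Q_R \le CR^{-\delta}$ for some $\delta>0$. Then a unit-ball Moser iteration for the uniformly elliptic operator $\Delta_{\om_u}$ (uniform ellipticity with respect to $g_P$ being the whole point of the regularity in Theorem~\ref{thm4.10}) upgrades this to $|u-u_B|\le Cr^{-\delta}$ on balls, hence $|\nabla^k u|\le Cr^{-\delta}$ for $k\ge 1$ by interior Schauder. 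Next, one feeds this back: with $|i\ppb u| = O(r^{-\delta})$ the nonlinear term $\calq_\om(i\ppb u)$ is $O(r^{-2\delta})$ and in fact lands in $\calc_{1+\delta}^\infty$-type classes once one differentiates, so $F \in \calc_{1+\delta}^\infty$. Applying the Hein–Tosatti weighted Schauder estimate (Proposition~\cite{H-T-20}) on balls of radius $\sim R/4$ with $\|u\|_{L^\infty}\le C$ then yields $|\nabla^2 u| = O(r^{-1-\delta+\alpha})$, and differentiating the equation by $\nabla_{\bbc}$ and iterating gives $|\nabla^l_{\bbc}\nabla_D^m u| = O(r^{-l-\delta'})$, $|\nabla^l_{\bbc}\nabla_D^{m+2} u| = O(r^{-l-1-\delta'})$ for any $\delta'<\delta$; after adjusting $\delta$, this is exactly $\nabla u \in \calc_1^\infty$ and $\nabla^2 u \in \calc_2^\infty$ (note the decay in the statement is stronger than for the linear case because here the right-hand side $f_4$ decays faster than $\mu>2$, so there is no logarithmic/linear-growth component $u_1$). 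Finally, since $i\ppbJ u = i\ppbJP u + d(K\circ du)$ with $K\in\calc_2^\infty$ (we are on an $\ALGot$ manifold) and $i\ppbJP u\in\calc_2^\infty$, we get $\om_u - \om \in \calc_2^\infty$, and since $\om - \om_P \in \calc_1^\infty$ we conclude $\om_u - \om_P \in \calc_1^\infty$ and $J$ is unchanged, so $(M,\om_u,J)$ is $\ALGot$.

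The main obstacle I expect is the bootstrapping of the decay rate from the crude $\delta$ coming from the energy argument to the sharp rates $1$ and $2$: one has to be careful that each time the equation is differentiated, the nonlinear term $\calq_\om(i\ppb u)$ and the discrepancy between $\Delta_{\om_u}$, $\Delta_\om$, and $\Delta_{g_P}$ do not reintroduce slowly-decaying pieces, and that the weighted Schauder estimate — which is stated for the \emph{product} metric $g_P$ and is insensitive to the complex structure — is applied to the operator written in $g_P$-coordinates with the error terms absorbed into the right-hand side. This is the step where the $\ALGot$ hypothesis (i.e. $K\in\calc_2^\infty$, equivalently $\calo_{3D}(D)$ trivial) is essential: it is what guarantees that passing from $i\ppbJP$ to $i\ppbJ$ costs only a $\calc_2^\infty$ error rather than a $\calc_1^\infty$ one, so that the output metric genuinely lies in the $\ALGot$ class and not merely in the weaker $\ALGone$ class.
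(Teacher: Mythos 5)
Your proposal follows essentially the same route as the paper: run the energy estimate of Proposition~\ref{prop3.12} on the Monge--Amp\`ere solution to get $Q_R\leq CR^{-\delta}$ and hence $|u-u_B|\leq Cr^{-\delta}$ on unit balls, upgrade by Moser iteration and interior Schauder to $|\nabla^k u|\leq Cr^{-\delta}$, then feed the equation back through the Hein--Tosatti weighted Schauder estimate (written for the product metric, with the nonlinear term $\calq_{\om}(i\ppb u)$ and the discrepancy between the Laplacians absorbed into the right-hand side) to obtain weighted decay of $\nabla u$, $\nabla^2 u$ and $i\ppbJ u$, and conclude that $\om_u-\om_P\in\calc_1^{\infty}$ while $J$ is unchanged, so $(M,\om_u,J)$ is still $\ALGot$. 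That is exactly the paper's argument, and your version supplies more of the details.

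The one step you should not claim is that ``after adjusting $\delta$'' the bootstrap lands exactly in $\nabla u\in\calc_{1}^{\infty}$ and $\nabla^2 u\in\calc_{2}^{\infty}$. The scheme you describe (and the paper's own proof) only yields $\nabla u\in\calc_{\delta}^{\infty}$, $\nabla^2 u\in\calc_{1+\delta}^{\infty}$, $i\ppbJ u\in\calc_{1+\delta}^{\infty}$ for some small $\delta>0$ determined by the differential inequality $Q_R\leq CR^{-(\mu-2)}-CR\,\frac{dQ_R}{dR}$; each pass of the weighted Schauder estimate gains one order relative to the $L^{\infty}$ bound on $u$, it does not promote $\delta$ to $1$. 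Your stated reason for the improvement --- that here the right-hand side decays with $\mu>2$, unlike the linear case --- does not distinguish the two situations: in Proposition~\ref{prop3.12} the potential has already been reduced (Lemma~\ref{lem3.3.1}) to $\calc^{\infty}_{\mu}$ with $\mu>2$ and zero integral, and still only the rate $\delta$ is obtained. The weaker rates are in fact all that is needed downstream (they give $\om_u-\om_P\in\calc_1^{\infty}$, hence the $\ALGot$ property, and the decay $|\db v|^2=O(r^{-2\delta})$ used in the splitting theorem), so the issue lies with the sharp exponents quoted in the statement rather than with your method; but as written your proof asserts more than the argument delivers.
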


\begin{proof}
    Using the same energy estimate as in proposition \ref{prop3.12} we can deduce that there exists $\delta >0$ such that for all $p = (w,x)$ at the end of $M$ and $|w|\gg R$, $|u-u_B|\leq Cr^{-\delta}$. Here $B=B(p,1)$. The standard Schauder estimate shows that $|\nabla^k u|\leq Cr^{-\delta}$. 
    Similar to the proof of proposition \ref{prop3.12}, we have 
    \begin{align}
        \nabla u\in \calc_{\delta}^{\infty},\quad \nabla^2 u\in \calc_{1+\delta}^{\infty},\quad i\ppbJ u\in \calc_{1+\delta}^{\infty}.
    \end{align}
    So, the new metric remains in the class $\ALGot$.
\end{proof}

Now, suppose $(M,\om, J)$ is an $\ALGot$ manifold equipped with a Calabi-Yau metric $\om$. The product structure near infinity provides a standard holomorphic function $w$. Then extend $w$ to the whole manifold smoothly and still denote this new global function as $w$. Keep in mind that $\bar{\partial}w=O(r^{-2})$ on the end and is in $T^*D$. Then: 
\begin{align}
    \partial\bar{\partial}w=\ppbJP w - \frac{i}{2} d(J-J_P)dw\in\calc_2^{\infty}.
\end{align}
We can find a solution $w'=w'_1+w'_2$ to the equation $\Delta w' = \Delta w$ and $w_1'=O((\log r)^2)$, $|w'_2|\leq C$. Since the real and imaginary part of $w$ are of linear growth, this means that $v := w - w'$ is not identically zero. Then $v$ satisfies $\Delta_{\om}v=0$ and is a candidate of the global holomorphic function on $M$.

\begin{thm}
    The complex-valued function $v$ defined above is a global holomorphic function. Moreover, the real and imaginary part of $v$ gives the splitting of $M = \bbc \times D$.
\end{thm}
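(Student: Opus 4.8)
The plan is to show that $v$ is holomorphic by establishing $\bar\partial v = 0$, then to upgrade this to a splitting by proving that $v$ has no critical points and that its real and imaginary parts are harmonic functions with parallel gradients, which forces a metric (hence holomorphic) product decomposition.

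\medskip

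\textbf{Step 1: $v$ is holomorphic.} Since $\Delta_\omega v = 0$ by construction, $v$ is a harmonic complex-valued function. On a Kähler manifold, the Bochner formula (or the Weitzenböck identity applied to the $(1,0)$-form $\partial v$, using that the metric is Ricci-flat) gives $\Delta |\bar\partial v|^2 = |\nabla \bar\partial v|^2 \geq 0$, so $|\bar\partial v|^2$ is subharmonic. From the decay estimates established earlier — $\bar\partial w = O(r^{-2})$ lies in $T^*D$, and $w'$ was chosen with $w_1' = O((\log r)^2)$ (hence $\bar\partial w_1' = 0$ since $w_1'$ is a function of $w$ alone pulled back from $\bbc$, up to lower-order corrections) and $|w_2'|\leq C$ with gradient in $\calc^\infty_\delta$ — one checks that $\bar\partial v = O(r^{-\delta'})$ for some $\delta' > 0$, so $|\bar\partial v|^2 \to 0$ at infinity. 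A subharmonic function on a complete manifold (of at most quadratic volume growth, which an $\ALGot$ manifold has) that tends to zero at infinity must be identically zero by the maximum principle. Hence $\bar\partial v \equiv 0$ and $v$ is holomorphic.

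\medskip

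\textbf{Step 2: $\mathrm{Hess}(v) = 0$, i.e. $\nabla\partial v = 0$.} Now that $v$ is holomorphic, $\partial v$ is a holomorphic $(1,0)$-form, and on a Ricci-flat Kähler manifold the refined Bochner identity reads $\Delta |\partial v|^2 = |\nabla \partial v|^2$. Again $|\partial v|^2$ is subharmonic; but this time $|\partial v|$ does \emph{not} vanish at infinity — rather $\partial v \to \partial w = dw + O(r^{-2})$, so $|\partial v|^2 \to 1$. So I cannot directly conclude $\partial v$ is parallel from the maximum principle alone. Instead, integrate: $\int_{M_R} \Delta |\partial v|^2 = \int_{M_R} |\nabla\partial v|^2$, and control the boundary term $\int_{\partial M_R} \partial_\nu |\partial v|^2$ using the decay $|\partial v|^2 - 1 = O(r^{-\delta''})$ and $|\nabla |\partial v|^2| = O(r^{-1-\delta''})$ together with the $O(r)$ growth of $|\partial M_R|$; choosing $\delta'' > 0$ appropriately makes the boundary term go to zero along a sequence $R_j \to \infty$. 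This forces $\int_M |\nabla \partial v|^2 = 0$, hence $\nabla\partial v \equiv 0$: the vector field dual to $\partial v$ is a parallel holomorphic vector field, and $\mathrm{Hess}(\Real v) = \mathrm{Hess}(\Image v) = 0$.

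\medskip

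\textbf{Step 3: From $\mathrm{Hess}\, v = 0$ to the splitting.} The real functions $s := \Real v$ and $t := \Image v$ have parallel gradients $\nabla s, \nabla t$, which (by the Kähler relation $\nabla t = J\nabla s$) are orthonormal up to scale; normalizing, $\nabla s$ and $\nabla t$ span a parallel $2$-plane distribution whose orthogonal complement is also parallel. By the de Rham decomposition theorem applied to the universal cover — or directly, since $s$ and $t$ give a Riemannian submersion $M \to \bbc$ with totally geodesic flat leaves and parallel horizontal distribution — $M$ splits isometrically as $\bbc \times D'$, where $D'$ is a level set $\{v = \mathrm{const}\}$ with the induced metric, and under this splitting $v$ is precisely the projection to the $\bbc$-factor. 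The map is then biholomorphic because $v$ is holomorphic and the splitting is $J$-invariant (the parallel $2$-plane is $J$-invariant). Finally, one identifies $D'$ with $D$: the level set $\{v = c\}$ for large $c$ is, via the asymptotic product structure near infinity, diffeomorphic and biholomorphic to $D$, and being a fiber of a holomorphic map it carries a trivial canonical bundle — combined with the earlier structural results ($\mathcal{O}_{3D}(D)$ trivial, remark on the holomorphic submersion $\oM \to \bbp^1$) this pins down $D' \cong D$. Assembling: $M \cong \bbc \times D$ biholomorphically, and since $M = \oM \setminus D$ with $D$ a fiber, $\oM$ compactifies to the $\bbp^1$-bundle $\bbp^1 \times D$.

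\medskip

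\textbf{Main obstacle.} The delicate point is Step 2: unlike Step 1, the maximum principle is unavailable because $|\partial v|$ has a nonzero limit at infinity, so everything hinges on the boundary-term estimate in the integration by parts. One must know the decay rate of $|\partial v|^2 - 1$ with enough precision — this requires carefully tracking how the $\calc^\infty_\mu$ ($\mu>2$) decay of the Ricci potential and the $\ALGot$ structure propagate through the Monge–Ampère solution $u$ and then through the harmonic function $w'$, and in particular confirming that the ``$\log$''-type growth terms ($u_1 \in \cald^\infty_{r\log r}$, $w_1' = O((\log r)^2)$) cancel in the combination $v = w - w'$ rather than contaminating the gradient estimate. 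If the naive decay is too slow to kill the boundary term, one would instead need a more refined argument — e.g. a three-annulus / frequency-decomposition argument showing $\partial v - dw$ decays faster than any fixed power, or exploiting that $\partial v$ is holomorphic to get exponential decay of its non-constant Fourier modes along $D$ as in the linear theory of Section 3.
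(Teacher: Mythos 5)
Your proposal is correct in outline and lives in the same circle of ideas as the paper (Bochner identity, Ricci-flatness, decay at infinity, then splitting from a parallel Hessian), but the execution differs in a way worth noting. The paper runs a \emph{single} integration-by-parts argument: it applies the Bochner--Kodaira formula to $|\bar{\partial}v|^2$, which for $\Delta_\omega v=0$ and $\mathrm{Ric}=0$ reads $\Delta_{\bar\partial}|\bar{\partial}v|^2=|\bar\nabla\bar\nabla v|^2+|\nabla\bar\nabla v|^2$, i.e.\ it already contains the full Hessian. Integrating over $N_r$ with boundary term $O\bigl(r\cdot r^{-1-2\delta}\bigr)\to 0$ (using $\bar\partial v=O(r^{-\delta})$, $\nabla\bar\partial v=O(r^{-1-\delta})$) gives simultaneously $\mathrm{Hess}\,v=0$ and that $\bar\partial v$ is parallel, hence identically zero by its decay; $|\partial v|\equiv 1$ then follows from the asymptotics. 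Your two-step route --- first $\bar\partial v\equiv 0$ by the maximum principle on a parabolic manifold (fine: an $\ALGot$ end has quadratic volume growth), then a second Bochner argument for the holomorphic form $\partial v$ --- is workable, but the ``main obstacle'' you single out in Step 2 is self-inflicted: you integrate $\Delta|\partial v|^2$, whose limit at infinity is $1$, so you must estimate $\partial_\nu|\partial v|^2$ directly; in the paper's version the integrated quantity itself decays and no such discussion is needed. (Your boundary term does decay, since $\nabla\partial v=O(r^{-1-\delta})$ follows from the established estimates on $u$, $w'$ and $g-g_P$, so Step 2 can be completed --- it is simply more work than necessary.)

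The one genuinely thin point is the identification $D'\cong D$ at the end of your Step 3. The level sets of $v$ near infinity are not the fibers $\{w=c\}\times D$ of the asymptotic product structure, so ``diffeomorphic and biholomorphic to $D$ via the asymptotic product structure'' is not immediate, and appealing to the triviality of $\calo_{3D}(D)$ does not by itself pin down $D'$. The paper instead extends $1/v$ holomorphically across $D$ inside $\oM$, obtaining a proper holomorphic submersion of a neighborhood $U_D$ onto a disc whose central fiber is literally $D$; Ehresmann identifies the nearby fibers (copies of $D'$) with $D$, and then parallelism combined with the decay $g-g_P=O(r^{-1})$ and $|J-J_P|=O(r^{-1})$ forces $g_{D'}=g_D$ and $J=J_P$, which is what upgrades the isometric splitting to the claimed holomorphic product. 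You should replace the last part of your Step 3 by an argument of this type.
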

\begin{proof}
    The Bochner-Kodaira formula gives:
    \begin{align}
        \Delta_{\bar{\partial}}|\bar{\partial}v|^2 &= |\bar{\nabla}\bar{\nabla} v|^2 + |\nabla\bar{\nabla} v|^2+2\text{Re}<\partial v, \bar{\partial}\Delta_{\bar{\partial}}v> + \text{Ric}(\bar{\partial}v, \partial v) \nonumber\\ 
        &= |\bar{\nabla}\bar{\nabla} v|^2 + |\nabla\bar{\nabla} v|^2 ,
    \end{align}
    since $\Delta_{\bar{\partial}}v = \frac{1}{2}\Delta_{\om}v=0$, and $\text{Ric} = 0$.
    Take $M_r = \Phi^{-1}(B_r^c\times D)$ and $N_r = M_r^c$. Let $\nu$ to be the outer normal vector of $\partial N_r$, using $\bar{\partial}_J v = \bar{\partial}_{J_P} v + \frac{i(J-J_P)dv}{2}$ and $\nu-\partial_r=O'(\frac{1}{r})$, we have
    \begin{align*}
        |\bar{\partial}v|^2 = O'(r^{-2\delta}), \quad \nabla_{\nu}|\bar{\partial}v|^2 = O'(r^{-1-2\delta})
    \end{align*}
    Then integration by parts shows:
    \begin{align}
        \int_{N_r} |\nabla\nabla v|^2 + |\nabla\bar{\nabla} v|^2 = \int_{N_r} \Delta_{\bar{\partial}}|\bar{\partial}v|^2 = -\frac{1}{2}\int_{\partial N_r} \nabla_{\nu}|\bar{\partial}v|^2 =O(r^{-2\delta})
    \end{align}
    Taking $r\to \infty$, we have $|\nabla\nabla v|^2 = |\nabla\bar{\nabla} v|^2 = 0$. This implies that $\text{Hess } v = 0$ and $\nabla v$, $dv$ are parallel. So $\partial v$ and $\bar{\partial} v$ are also parallel. Since $|\bar{\partial}v|^2 = O(r^{-2\delta})$, then $\bar{\partial} v = 0$, which means $v$ is holomorphic.
    Similarly, we can show $|\partial v|\equiv 1$. This means that $(M, g)$ splits as $(\bbr^2\times D', g_{\bbr^2}+ g_{D'})$ along the real part and the imaginary part of $v$.  

    Now $\frac{1}{v}$ gives a local holomorphic fibration structure in a neighborhood $U_D$ of $D$ in the initial compact Kähler manifold. We take $U_D = (\frac{1}{v})^{-1}(B(0,1))$. It is easy to check that $\frac{1}{v}: U_D \to B(0,1)\subset \bbr^2$ is a proper submersion. By Ehresmann's theorem, each fiber is diffeomorphic to the central fiber $D$, which shows $D'\cong D$. We can now assume that $w=\frac{1}{v}$ is the coordinate we choose from the beginning. Since $g-g_P = O(r^{-1})$, we have $g_{D'} = g_D$. Then $\nabla^g = \nabla^{g_P}$ shows that $\nabla |J-J_p|^2 = 0$. Since $|J-J_P| = O(r^{-1})$, we have $J=J_P$.
\end{proof}

\appendix
\section{Solving Poisson Equations on $\bbr^2$}

We are going to solve $\Delta u = f$ on $\bbr^2$ when $f$ is smooth and $f=\estim{r^{-\mu}}$ for some $\mu \geq 1$.
Moreover, we assume 
\begin{align}\label{app1.1}
    r^{\mu + k}|\nabla^k f|\leq C_k.
\end{align}
Take a smooth function
\begin{align}
    \eta(r)=\begin{cases}
        0, & 0\leq r\leq 1 \\
        \in (0, 1), & 1<r<2 \\
        1, & r \geq 2
    \end{cases}.
\end{align}
We can rewrite $f = \eta f + (1-\eta)f$. Now $\eta f$ is identically zero in the unit ball $B_1(0)$ and equals $f$ outside $B_2(0)$. $(1-\eta)f$ is compactly supported on $B_2(0)$. If we can solve the Poisson equation for these two functions, then the original can be solved.

Firstly, we assume that $f$ vanishes in the unit ball. We write $f(r,\theta) = \sumover{-\infty}{+\infty}f_n(r)e^{in\theta}$ under polar coordinate via Fourier series. Assume $u(r,\theta) = \sumover{0}{\infty}u_n(r)e^{in\theta}$. Then the Poisson equation becomes
\begin{align}\label{app1.3}
    \partial_r^2 u_n + \frac{1}{r}\partial_r u_n - \frac{n^2}{r^2}u_n = f_n, \;\; n = 0, \pm 1, \pm 2\dots
\end{align}
When $n=0$ we can directly write down the solution as
\begin{align}
    u_0(r) &=\int_0^r\frac{1}{t}\int_0^t sf_0(s)ds  \\
    \partial_ru_0(r) &= \frac{1}{r}\int_0^r sf_0(s) ds \\
    \partial_r^2 u_0(r) &= f_0(r) - \frac{1}{r}\partial_r u_0
\end{align}
The integral is well-defined since $f_0\equiv 0$ on the unit ball. By checking the decay rate, we have:
\begin{align}
    u_0(r) = \begin{cases}
        \estim{\log r} & \mu > 2, \\
        \estim{(\log r)^2} & \mu = 2, \\
        \estim{r^{2-\mu}} & \mu < 2;
    \end{cases} \\
    \partial_ru_0(r) = \begin{cases}
        \estim{\frac{1}{r}} & \mu > 2, \\
        \estim{\frac{\log r}{r}} & \mu = 2, \\
        \estim{r^{1-\mu}} & \mu \in [1,2);
    \end{cases}\\
    \partial_r^2 u_0(r) = f_0(r) - \frac{1}{r}\partial_r u_0 = \begin{cases}
        \estim{\frac{1}{r^2}} & \mu > 2, \\
        \estim{\frac{\log r}{r^2}} & \mu = 2, \\
        \estim{r^{-\mu}} & \mu \in [1,2).
    \end{cases}
\end{align}

In addition, if $\mu > 2$ and $\int_0^{+\infty}sfds = 0$, then $\partial_ru_0(r) = -\frac{1}{r}\int_r^{+\infty} sf_0(s) ds = \estim{r^{1-\mu}}$, and also vanishes on the unit ball. Similarly, we have $u_0 = \estim{r^{2-\mu}}, \partial_r^2 u_0(r) = \estim{r^{-\mu}}$. 

Denote $T = \partial^2_r + \frac{1}{r}\partial_r$, we have $\partial_r(Tu_0) = T(\partial_r u_0) - \frac{1}{r^2}\partial_r u_0$. So $T(\partial_ru_0) = \partial_r f + \frac{1}{r^2}\partial_ru_0 =: \Tilde{f}$. One can check that $\partial_ru_0$ can be solved in terms of $\Tilde{f}$ exactly in the same way as before, and $\int_0^{+\infty}s\Tilde{f}ds = 0$. Using the observation above, we get:
\begin{align}
    \partial^3_ru_0(r) = \begin{cases}
        \estim{\frac{1}{r^3}} & \mu > 2, \\
        \estim{\frac{\log r}{r^3}} & \mu = 2, \\
        \estim{r^{-\mu - 1}} & \mu \in [1, 2).
    \end{cases}
\end{align}

Now we derive the following lemma:

\begin{lemma}\label{lem3.1}
    The behavior of $u_0$ depends on the decay rate of $f$. When $f$ satisfies condition (\ref{app1.1}), we have 
    \begin{enumerate}
        \item $|\nabla^k u_0|= \estim{r^{2-\mu - k}}$ if $\mu\in[1, 2)$,
        \item $|u_0|=\estim{(\log r)^2}$, $|\nabla^ku_0|=\estim{\frac{\log r}{r^k}}$ for $k\geq 1$ if $\mu = 2$,
        \item $|u_0|=\estim{\log r}$, $|\nabla^ku_0|=\estim{r^{-k}}$ for $k\geq 1$ if $\mu > 2$
    \end{enumerate}
\end{lemma}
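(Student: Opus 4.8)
The plan is to reduce the statement to one‑dimensional estimates on the zeroth angular mode and then bootstrap. Write $f_0(r):=\frac{1}{2\pi}\int_0^{2\pi}f(r\cos\theta,r\sin\theta)\,d\theta$; condition (\ref{app1.1}) gives $|\partial_r^k f_0(r)|\le C_k r^{-\mu-k}$, and $f_0\equiv 0$ on $B_1(0)$ in the case at hand, while $u_0$ is exactly the double integral $u_0(r)=\int_0^r t^{-1}\int_0^t s f_0(s)\,ds\,dt$ displayed above. The computations preceding the lemma already establish, by splitting the inner and outer integrals at a fixed threshold radius, the asserted bounds for $u_0$, $\partial_r u_0$ and $\partial_r^2 u_0$ in each of the three regimes $\mu\in[1,2)$, $\mu=2$, $\mu>2$. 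So two points remain: (i) bounding all higher radial derivatives $\partial_r^k u_0$ for $k\ge 3$, and (ii) passing from radial derivatives to full covariant derivatives $|\nabla^k u_0|$.

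For (i) I would induct on $k$ using the radial equation itself. From $\partial_r^2 u_0=f_0-r^{-1}\partial_r u_0$, differentiating $k-1$ times and applying the Leibniz rule gives
\begin{align*}
    \partial_r^{k+1}u_0=\partial_r^{k-1}f_0-\sum_{j=0}^{k-1}\binom{k-1}{j}\bigl(\partial_r^{j}r^{-1}\bigr)\bigl(\partial_r^{k-j}u_0\bigr),
\end{align*}
and since $|\partial_r^{j}r^{-1}|\le C r^{-1-j}$ while $|\partial_r^{k-1}f_0|\le C r^{-\mu-k+1}$, every term on the right is controlled by the inductive bounds on $\partial_r^m u_0$ for $m\le k$. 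A short check shows that in each regime the target rate $\rho_k(r)$ for $|\partial_r^k u_0|$ (namely $r^{2-\mu-k}$; or $r^{-k}\log r$ when $\mu=2$; or $r^{-k}$ when $\mu>2$) satisfies $\rho_{k+1}=r^{-1}\rho_k$ for $k\ge 2$ and absorbs $r^{-\mu-k+1}$, so each summand is $\estim{\rho_{k+1}}$ and the induction closes. Alternatively one can use the recursion $T(\partial_r u_0)=\partial_r f_0+r^{-2}\partial_r u_0=:\tilde f$ with $T=\partial_r^2+r^{-1}\partial_r$, together with the identity $\int_0^r s\tilde f\,ds=r\,\partial_r^2 u_0(r)$, which exhibits $\partial_r u_0$ as the same one‑dimensional solution operator applied to a right‑hand side that decays one order faster and carries the extra integrability needed to repeat the argument.

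For (ii), since $u_0$ is radial on $\bbr^2$, the Euclidean Hessian and its iterates decompose into $\partial_r^{j}u_0$ times tensors built from $x/r$ and $\delta$ that are homogeneous of degree $-(k-j)$ in $r$ with $1\le j\le k$; hence $|\nabla^k u_0|\le C\sum_{j=1}^{k}r^{\,j-k}|\partial_r^j u_0|$. Substituting the radial bounds from (i) and noting that in each regime all summands have the same order yields precisely the stated estimates, with $|u_0|=\estim{\log r}$ and $|u_0|=\estim{(\log r)^2}$ read off directly as the $k=0$ assertions.

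The computations are elementary, so I expect the only real friction to be bookkeeping: carrying the three regimes simultaneously through the induction — in particular ensuring that no stray $\log$ factor appears for $\mu\in[1,2)$ and exactly one appears on each derivative when $\mu=2$ — and the careful evaluation of the iterated integrals at the borderline $\mu=2$ when checking the base cases. Boundary contributions at $r=0$ vanish because $f_0$ is supported away from the origin, and those at $r=+\infty$ are controlled by the decay hypotheses, so no genuine analytic difficulty arises there.
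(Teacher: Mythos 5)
Your proposal is correct and matches the paper's argument in all essentials: the explicit double-integral formula and the regime-by-regime decay checks for $u_0$, $\partial_r u_0$, $\partial_r^2 u_0$ are exactly the computations preceding the lemma, and your bootstrap for the higher radial derivatives (whether by Leibniz induction on $\partial_r^2 u_0 = f_0 - r^{-1}\partial_r u_0$, or via the recursion $T(\partial_r u_0)=\partial_r f_0 + r^{-2}\partial_r u_0$ with $\int_0^{\infty}s\tilde{f}\,ds=0$, which is precisely the paper's route to $\partial_r^3 u_0$) closes in each regime just as you verify. Your conversion $|\nabla^k u_0|\leq C\sum_{j=1}^{k}r^{j-k}|\partial_r^j u_0|$ for the radial function is a detail the paper leaves implicit, and it correctly yields the stated bounds on full derivatives.
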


When $|n|\geq 1$, we let $r = e^t$. By symmetry, we can assume $n>0$. The equation (\ref{app1.3}) can be rewritten into
\begin{align}
    \partial_t^2 u_n - n^2u_n = e^{2t}f_n(e^t)
\end{align}
Let $s=nt$, we have
\begin{align}
    \partial_s^2 u_n - u_n = \frac{1}{n^2}e^{\frac{2s}{n}}f
\end{align}

The solutions to these equations depend on the values of $\mu$ and $n$. 
\begin{caseof}
  \case{$\mu>1$:}{
  Since now $f = O(e^{-\frac{\mu s}{n}})$, we can safely write 
  \begin{align}
      e^{-s}(\partial_su_n+u_n) = -\int_{s}^{+\infty}\frac{1}{n^2}e^{(\frac{2}{n}-1)\lambda}f_nd\lambda,
  \end{align}
  because $\frac{2}{n}-1-\frac{\mu}{n} < 0$. Now $e^{-s}(\partial_su_n+u_n)$ is a constant when $s < 0$ and $e^{-s}(\partial_su_n+u_n)=O(e^{(\frac{2-\mu}{n}-1)s})$ when $s\to +\infty$. We can do integration one more time to get 
  \begin{align}
      u_n = e^{-s}\int_{-\infty}^se^{2\sigma}(-\int_{\sigma}^{+\infty}\frac{1}{n^2}e^{(\frac{2}{n}-1)\lambda}f_n d\lambda) d\sigma.\label{equation3.1}
  \end{align}
  Assume $|f_n(s)|\leq C_ne^{-\frac{\mu s}{n}}$, then
  \begin{align}
      |u_n(s)|&\leq C_n\frac{e^{-s}}{n^2}\int_{-\infty}^{s}e^{2\sigma}\int_{\sigma}^{+\infty}e^{(\frac{2-\mu}{n}-1) \lambda}\chi_{s>0}d\lambda d\sigma \\
      &=C_n\frac{e^{-s}}{n(n+\mu - 2)}\int_{-\infty}^{s}(e^{(\frac{2-\mu}{n}+1)\sigma}\chi_{\sigma>0} +e^{2\sigma}\cdot\chi_{\sigma\leq0})d\sigma \\
      &=C_n\left(\frac{1}{2n(n+\mu-2)}(e^s\chi_{s\leq 0}+e^{-s}\chi_{s>0}) \right.
      \\
       &\phantom{=C_n\Big(}\left.+ \frac{1}{n^2-(2-\mu)^2}(e^{\frac{2-\mu}{n}s}-e^{-s})\chi_{s>0} \right)
  \end{align}
  where $\chi_{s>0}$ is the characteristic function. So $u_n\to 0$ when $s\to 0$ and when $s\to +\infty$
  \begin{align}
    |u_n| \leq C_n'\cdot\begin{cases}
    \begin{aligned}
    \left(\frac{1}{4n^2} + \frac{1}{(2-\mu)^2-n^2}\right)e^{-s} &= \estim{r^{-n}} && \frac{2-\mu}{n}+1<0, \\
    \frac{1}{4n^2}se^{-s} &= \estim{r^{2-\mu}\log r} && \frac{2-\mu}{n}+1=0, \\
    \frac{1}{n^2-(2-\mu)^2}e^{\frac{2-\mu}{n}s} &= \estim{r^{2-\mu}} && \frac{2-\mu}{n}+1>0.
    \end{aligned}
    \end{cases}
    \end{align}
  }
  \case{$\mu = 1$ and $n > 1$:}{
  In this case, since we still have $\frac{2}{n}-1-\frac{\mu}{n}<0$, the solutions can also be derived from equation(\ref{equation3.1}). Then $|u_n|=\estim{r^{2-\mu}}$.
  }
  \case{$\mu = 1$ and $n = 1$:}{
  In this case, we use different integration formulas according to the decomposition $\partial_s^2u_n-u_n=\partial_s(\partial_s u_n - u_n) + (\partial_s u_n - u_n)$:
\begin{align}
    u_1(s) = e^{s}\int_{-\infty}^se^{-2\sigma}(\int_{-\infty}^{\sigma}e^{3\lambda}f_1 d\lambda) d\sigma. 
\end{align}
Since $|f_1|\leq C\,r^{-1}$, we have $|u_1|\leq C\, r\log r$, and $u_1$ equals $0$ when $s\leq0$.
}
\end{caseof}

Since $\partial_ru_n=\frac{n}{r}\partial_su_n$, the decay or growth rates mainly follow that $|\partial_r^ku_n|\sim \frac{n^k}{r^k}|u_n|$ when $r\gg 1$. When $\mu=1$ and $n=1$, a more delicate analysis shows that $|\partial_ru_1|=\estim{\log r}$, $|\partial^k_ru_1|=\estim{r^{-k+1}}$ for $k\geq 2$.

Define $v_n = \sumover{-n}{n}u_ie^{in\theta}-u_0$. Then $v_n$ is uniformly convergent on every compact subset in $\bbc$. Since $f$ is smooth on $\bbc$ and satisfies condition (\ref{app1.1}), we can use integration by parts to get estimates on $f_n$ involving $n$. Then we can derive higher order estimates on every compact subset. Taking the limit when $n\to \infty$ leads to a solution to $\Delta v=f-\Delta u_0$. We now summarize the behavior of the solutions $v$ according to different decaying rates $\mu$:
\begin{lemma}\label{lem3.2}
    Let $v$ be the limit of $v_n$ when $n\to +\infty$. Then we have:
    \begin{enumerate}
        \item when $\mu = 1$, $v=\estim{r\log r}$, $\partial_rv=\estim{\log r}$, $\partial^k_r v = \estim{r^{-k + 1}}$ for $k\geq 2$; 
        \item when $\mu\in (1, 2)\cup(2,3)$, $\partial^k_r v = \estim{r^{\max\{-1,2-\mu\}-k}}$ for all $k\geq 0$;
        \item when $\mu \in \{2, 3\}$, $|\partial^k_r v| = \estim{r^{2-\mu-k}\log r}$ for all $k\geq 0$;
        \item Otherwise, $\partial^k_r v = \estim{r^{-1-k}}$ for all $k\geq 0$.
    \end{enumerate}
\end{lemma}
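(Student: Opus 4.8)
The plan is to assemble the mode-by-mode estimates already obtained --- Lemma~\ref{lem3.1} for the radial mode $u_0$, and the case analysis above (formula~\eqref{equation3.1} and the remarks following it) for the modes $u_n$ with $|n|\geq1$ --- into a statement about the full non-radial series $v=\sum_{n\neq0}u_n(r)e^{in\theta}$. The two points still to be settled are that this series, together with all of its term-by-term $r$-derivatives, converges uniformly on compact sets, and that for each range of $\mu$ a single mode governs the asymptotics at infinity.

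First I would record the rapid decay of the Fourier coefficients in $n$. On $\bbr^2$ the angular derivative satisfies $|\partial_\theta^m f|\leq C_m r^{-\mu}$ for every $m$ --- each application of $\partial_\theta$ contributes a factor bounded by $r$ times a gradient, which by condition~\eqref{app1.1} does not worsen the $r^{-\mu}$ decay --- and likewise $|\partial_\theta^m\partial_r^j f|\leq C_{m,j}r^{-\mu-j}$. Integrating $f_n(r)=\frac{1}{2\pi}\int_0^{2\pi}fe^{-in\theta}\,d\theta$ by parts $m$ times in $\theta$ then gives $|\partial_r^j f_n(r)|\leq C_{m,j}\,n^{-m}\,r^{-\mu-j}$ for all $m,j$, so the constant $C_n$ appearing in the bounds of the case analysis may be taken to be $O(n^{-m})$ for any $m$. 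Feeding this into the explicit integral representations of the $u_n$, and using $\partial_r=\frac{n}{r}\partial_s$ so that $|\partial_r^k u_n|$ is at most $C_k\frac{n^k}{r^k}$ times the growth factor of $u_n$, one sees that the tail $\sum_{|n|>N}$, differentiated to any fixed order, is uniformly small on $\{1\leq|w|\leq R\}$ for every $R$ once $m$ is chosen large; near the origin each $u_n(r)e^{in\theta}$ is a harmonic polynomial with coefficient of size $O(n^{-m})$, so the series is $C^\infty$ there as well. Hence $v\in C^\infty(\bbr^2)$, term-by-term differentiation is legitimate, and $\Delta v=\sum_{n\neq0}(\Delta u_n)e^{in\theta}=\sum_{n\neq0}f_ne^{in\theta}=f-\Delta u_0$, which is the equation attributed to $v$ in the text.

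With convergence in hand, the four cases are obtained by selecting the dominant mode; the summable constants make this legitimate. For $\mu=1$, the modes $n=\pm1$ give $|u_{\pm1}|=\estim{r\log r}$, $|\partial_r u_{\pm1}|=\estim{\log r}$ and $|\partial_r^k u_{\pm1}|=\estim{r^{1-k}}$ for $k\geq2$ (Case~3 above and the delicate analysis noted there), while every mode with $|n|\geq2$ is $\estim{r^{2-\mu}}=\estim{r}$ with a summable constant and so is dominated; this gives~(1). For $\mu\in(1,2)\cup(2,3)$ every mode with $|n|\geq1$ lies in the subcase $\frac{2-\mu}{n}+1>0$ (since $n\geq1>\mu-2$), whence $|\partial_r^k u_n|=\estim{r^{2-\mu-k}}$ and $v$ inherits this rate, with $\max\{-1,2-\mu\}=2-\mu$ throughout this range; this gives~(2). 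For $\mu=3$ the resonant mode $n=\pm1=\pm(\mu-2)$ falls in the borderline subcase $\frac{2-\mu}{n}+1=0$, which carries the extra $\log r$, while the higher modes decay strictly faster; for $\mu=2$ the bound $\estim{r^{2-\mu}\log r}$ holds a fortiori; this gives~(3). Finally, for $\mu>3$ the modes $n=\pm1$ sit in the subcase $\frac{2-\mu}{n}+1<0$ with $|u_{\pm1}|=\estim{r^{-1}}$, all higher modes decaying at least as fast, which gives~(4).

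The main obstacle is the bookkeeping in the convergence step: one must check that every error term produced when differentiating the integral formulas for $u_n$ in $r$, and every term in the tail $\sum_{|n|>N}$, carries an $n$-dependence that is swamped by the rapid decay $C_n=O(n^{-m})$ of the Fourier coefficients, so that interchanging summation with differentiation and with $\Delta$ is justified. The growth and decay rates themselves are already contained in the case analysis, so once the convergence is secured the lemma follows by inspection.
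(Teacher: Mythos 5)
Your proposal is correct and follows essentially the same route as the paper: it combines the explicit mode-by-mode case analysis (the subcases $\tfrac{2-\mu}{n}+1<0,=0,>0$ and the special $\mu=1$, $n=1$ mode) with rapid decay of the Fourier coefficients $f_n$ in $n$ obtained by integrating by parts in $\theta$, to justify term-by-term differentiation and then read off the dominant mode in each range of $\mu$. The paper states the lemma as a summary of exactly this argument, so your write-up is simply a more detailed version of the intended proof, with the mode selection in each case carried out correctly.
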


Lastly, we still need to solve the Poisson equation when the potential $f$ has compact support.
\begin{lemma}\label{lem3.3}
    Assume $f\in C^{\infty}(\bbr^2)$ is compactly supported in $B_2(0)$, then there is a smooth solution to the equation $\Delta u=f$. Moreover, $|u|=\estim{\log r}$, $|\nabla^ku|=\estim{r^{-k}}$ for $k\geq 1$.
\end{lemma}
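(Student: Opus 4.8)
The statement to prove is Lemma~\ref{lem3.3}: if $f\in C^\infty(\bbr^2)$ is compactly supported in $B_2(0)$, then $\Delta u = f$ has a smooth solution with $|u| = \estim{\log r}$ and $|\nabla^k u| = \estim{r^{-k}}$ for $k\geq 1$. The natural approach is convolution with the fundamental solution of the Laplacian on $\bbr^2$, namely
\begin{align*}
    u(x) = \frac{1}{2\pi}\int_{\bbr^2} \log|x-y|\, f(y)\, dy.
\end{align*}
This is a classical Newtonian potential; since $f\in C^\infty_c$, standard elliptic theory immediately gives $u\in C^\infty(\bbr^2)$ and $\Delta u = f$, so the only content is the asymptotic behavior as $r\to\infty$.

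\textbf{Key steps.} First I would fix $R_0 = 2$ so that $\mathrm{supp}(f)\subset B_{R_0}(0)$, and for $|x| = r > 2R_0$ I would write $\log|x-y| = \log r + \log|x/r - y/r|$ and expand. Setting $m = \int_{\bbr^2} f\, dy$, the leading term is $\frac{m}{2\pi}\log r$, which accounts for the $\estim{\log r}$ bound on $u$ itself. (If one wanted the sharper statement that $u - \frac{m}{2\pi}\log r$ is bounded, the remaining integral $\frac{1}{2\pi}\int f(y)\log|x-y/|x|\cdot\ldots|$ is controlled using $|\log|x-y| - \log|x|| \leq C|y|/|x|$ for $|y|\leq R_0 < |x|/2$, giving an $\estim{r^{-1}}$ correction, but the lemma only asks for $\estim{\log r}$.) Second, for the derivative estimates I would differentiate under the integral sign: $\nabla^k u(x) = \frac{1}{2\pi}\int_{\bbr^2} \nabla^k_x \log|x-y|\, f(y)\, dy$, and use the pointwise bound $|\nabla^k_x \log|x-y|| \leq C_k |x-y|^{-k}$. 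For $|x| > 2R_0$ and $y\in\mathrm{supp}(f)$ we have $|x-y| \geq |x|/2$, so $|\nabla^k u(x)| \leq C_k r^{-k}\|f\|_{L^1} = \estim{r^{-k}}$. Third, one checks this is consistent with the $\estp{\cdot}$ notation convention from the excerpt: since $u$ and all its derivatives are smooth and the bounds $|\nabla^k u|\leq C_k r^{-k}$ hold for all $k$, the solution is genuinely of the claimed type (in the terminology of Definition~\ref{d3.1}, $u\in\cald^\infty_{\log r}(\bbr^2)$).

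\textbf{Main obstacle.} Honestly there is no serious obstacle here; this is the easiest case of the appendix analysis and the argument is entirely classical. The only point requiring a line of care is the interchange of differentiation and integration near $r\to\infty$, which is justified because for $|x|$ bounded away from $\mathrm{supp}(f)$ the integrand $\nabla^k_x\log|x-y|$ is smooth and uniformly bounded in $y$, and for $|x|$ in a compact set one uses that $\log|x-y|$ and its $x$-derivatives are locally integrable in $y$ against the bounded function $f$. A secondary bookkeeping point is that $u$ defined by the potential integral is only one solution; any other differs by a harmonic function on $\bbr^2$, and the lemma as stated asserts existence of \emph{a} solution with the given growth, so no uniqueness discussion is needed. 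I would close by remarking that combining Lemma~\ref{lem3.1}, Lemma~\ref{lem3.2}, and Lemma~\ref{lem3.3} via the decomposition $f = \eta f + (1-\eta)f$ completes the proof of Proposition~\ref{prop3.2}.
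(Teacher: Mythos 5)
Your proposal is correct, but it follows a different route from the paper. You represent the solution directly as the Newtonian potential $u(x)=\frac{1}{2\pi}\int \log|x-y|\,f(y)\,dy$ and read off the asymptotics from the kernel: $|u|=\estim{\log r}$ from the leading $\frac{m}{2\pi}\log r$ term, and $|\nabla^k u|=\estim{r^{-k}}$ from $|\nabla^k_x\log|x-y||\leq C_k|x-y|^{-k}$ together with $|x-y|\geq |x|/2$ on the support of $f$; this is classical and the justification of differentiation under the integral is routine, as you note. The paper instead avoids the fundamental-solution representation entirely: it solves the Dirichlet problem $\Delta v=f$ on $B_3(0)$ with zero boundary data, truncates the solution as $v_1=v(1-\eta(\tfrac{r}{3}))$, and observes that $f-\Delta v_1$ is compactly supported in the annulus $B_6(0)\setminus B_3(0)$, hence vanishes near the origin, so the already-established Fourier-mode analysis (Lemmas \ref{lem3.1} and \ref{lem3.2}) applies to produce $v_2$ with exactly the claimed $\estim{\log r}$ and $\estim{r^{-k}}$ behavior; then $u=v_1+v_2$. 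Your argument is shorter and self-contained, relying only on standard potential theory, and as a bonus it makes visible the sharper statement that $u-\frac{m}{2\pi}\log r$ is bounded (indeed $\estim{r^{-1}}$ away from the leading term); the paper's argument buys uniformity of method, recycling the same ODE-by-Fourier-modes machinery used for the non-compactly-supported part of $f$, so that all cases of the appendix are handled by one bookkeeping scheme. Either proof suffices for the use made of the lemma in Proposition \ref{prop3.2}.
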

\begin{proof}
    Let $v$ be the solution to the Dirichlet problem:
    \begin{align}
        \begin{cases}
            \Delta v = f & \text{in  } B_3(0) \\
            v=0 & \text{on  } \partial B_3(0)
        \end{cases}.
    \end{align}
    Denote $v_1 = v(1-\eta(\frac{r}{3}))$, then $f-\Delta v_1$ is compactly supported in $B_6(0)\setminus B_3(0)$. Using Lemma(\ref{lem3.1}), (\ref{lem3.2}) we can find a solution to $\Delta v_2 = f-\Delta v_1$ satisfying $|v_2|=\estim{\log r}$, $|\nabla^kv_2|=\estim{r^{-k}}$ for $k\geq 1$. Then $u=v_1+v_2$.
\end{proof}

Combining the results in Lemma(\ref{lem3.1}), (\ref{lem3.2}) and (\ref{lem3.3}), we can derive the following estimates on the solutions to the Poisson equation:

\begin{lemma}
    Let $f$ be a smooth funtion on $\bbr^2$ satisfying the condition(\ref{app1.1}) for some $\mu\geq1$. Then there exists a solution to the Poisson equation $\Delta u = f$. And
    \begin{enumerate}
        \item if $\mu=1$, then $u=\estim{r\log r}$, $|\nabla u|=\estim{\log r}$, $|\nabla^ku|=\estim{r^{-k+1}}$ for $k\geq 2$;
        \item if $\mu\in (1, 2)$, then $|\nabla^ku|=\estim{r^{2-\mu}}$ for $k\geq 0$;
        \item if $\mu = 2$, then $u=\estim{(\log r^2)}$, $|\nabla^ku|=\estim{\frac{\log r}{r}}$ for $k\geq 1$;
        \item if $\mu> 2$, then $u=\estim{\log r}$, $|\nabla^ku|=\estim{r^{-k}}$ for $k\geq 1$.
    \end{enumerate}
    The solution is unique up to linear harmonic functions.
    
\end{lemma}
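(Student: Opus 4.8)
The plan is to assemble the result from the three special cases already treated: Lemma~\ref{lem3.1} (potential vanishing near the origin, zero Fourier mode), Lemma~\ref{lem3.2} (the remaining Fourier modes), and Lemma~\ref{lem3.3} (compactly supported potential). First I would split $f$ with the cutoff $\eta$ introduced above, writing $f = \eta f + (1-\eta) f$. Since $\eta f$ vanishes on $B_1(0)$, agrees with $f$ outside $B_2(0)$, and still satisfies condition~(\ref{app1.1}) with possibly enlarged constants (so it has the same decay rate $\estim{r^{-\mu}}$), while $(1-\eta)f$ is smooth and compactly supported in $B_2(0)$, by linearity it is enough to solve $\Delta u = f$ for each summand and add the solutions.

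For $\eta f$, expand in polar coordinates, $\eta f(r,\theta) = \sum_n f_n(r) e^{in\theta}$, and solve mode by mode: the zero mode by the explicit double integral of Lemma~\ref{lem3.1}, and the nonzero modes by the ODE analysis in the three cases preceding Lemma~\ref{lem3.2}, whose sum $v = \lim_n v_n$ is controlled by Lemma~\ref{lem3.2}. Convergence of the series in $C^k_{\mathrm{loc}}(\bbr^2)$, together with the attendant interior estimates, follows from the integration-by-parts bounds on $f_n$ in $n$ recorded above. Thus $u_0 + v$ solves $\Delta u = \eta f$ with precisely the four growth/derivative estimates of the present lemma. For $(1-\eta)f$, Lemma~\ref{lem3.3} produces a solution $u'$ with $|u'| = \estim{\log r}$ and $|\nabla^k u'| = \estim{r^{-k}}$ for $k \ge 1$. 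Adding $u_0 + v$ and $u'$, one checks case by case that the contribution of $u'$ is absorbed into the leading term: in case~(1) it is $\estim{\log r}$ with decaying derivatives, hence absorbed into $\estim{r\log r}$; in case~(2), $\estim{\log r}\subset\estim{r^{2-\mu}}$ since $\mu<2$, and likewise for the derivatives; in cases~(3) and~(4) it is comparable to or dominated by the stated rate. This gives the existence statement with all four estimates.

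For the uniqueness clause, suppose $u_1$ and $u_2$ both solve $\Delta u = f$ and obey the stated bounds; then $h = u_1 - u_2$ is harmonic on $\bbr^2$ and $h = o(r^2)$ (the worst case being $\estim{r\log r}$). Applying the interior estimate for second derivatives of harmonic functions on balls $B(x,R)$ and letting $R \to \infty$ forces $\nabla^2 h \equiv 0$, so $h$ is an affine function $ax + by + c$, i.e. a harmonic polynomial of degree at most one. This is the asserted uniqueness ``up to linear harmonic functions''.

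The only genuinely delicate part of the argument is the mode-by-mode ODE analysis behind Lemmas~\ref{lem3.1} and~\ref{lem3.2}, especially the resonant case $\mu = 1$, $n = 1$, where the homogeneous solutions $e^{\pm s}$ of $\partial_s^2 u_1 - u_1 = e^{2s} f_1(e^s)$ are resonant with the forcing and produce the borderline $r\log r$ growth with only $\log r$ decay of the gradient. Since that analysis has already been carried out above, the present lemma reduces to collecting the pieces and verifying that the compactly supported part never worsens the estimates.
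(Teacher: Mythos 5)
Your proposal is correct and follows essentially the same route as the paper: split $f$ with the cutoff $\eta$, solve the part vanishing near the origin mode-by-mode via Lemmas \ref{lem3.1} and \ref{lem3.2}, handle the compactly supported part via Lemma \ref{lem3.3}, and combine, checking that the compactly supported contribution never dominates. Your Liouville-type argument for uniqueness (a harmonic difference of growth $O(r\log r)$ has vanishing Hessian by interior estimates, hence is affine) is also correct and supplies a step the paper leaves implicit.
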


\begin{rem}
    One can also apply a weighted Schauder estimate on $\bbr^2$ to see the decay rate of higher order derivatives of $v$. By using rescaling we have 
    \begin{align}
        \sum_{l=0}^{k}R^l|\nabla^l v|_{L^{\infty}B_R(p)}+ R^{k+\alp}[\nabla^k v]_{C^{\alp}(B_R(p))} \leq C(R^{k+\alp}|\nabla^{k-2}\Delta v|_{C^{\alp}(B_{2R}(p))}+|v|_{L^{\infty}B_{2R}(p)}).
    \end{align}
    But this estimate might not be optimal for some $\mu$'s.
\end{rem}



\nocite{*}
\printbibliography

\end{document}